\newtheorem{thm}{Theorem}[section]
\newtheorem{cor}{Corollary}[section]
\newtheorem{lem}[thm]{Lemma}
\newtheorem{prop}[thm]{Proposition}
\theoremstyle{remark}
\newtheorem{rmk}[thm]{Remark}
\theoremstyle{definition}
\newtheorem{defi}{Definition}[section]                                     
\numberwithin{equation}{section}
\def\p{\partial}
\def\R{\mathbb{R}}
\def\C{\mathbb{C}}
\def\N{\mathbb{N}}
\def\Q{\mathbb{Q}}
\def \T {\mathbb T}
\def\l{\lambda}
\def\i{\sqrt{-1}}
\def\t{\Delta}
\def\I {\mathbb{I}}
\def \ft {\mathfrak t}
\def\o{\omega}
\def\g{{\mathfrak g}}
\def\J{{\mathcal J}}
\def\cA{{\mathcal A}}
\def\cD{\mathcal D}
\def\cF{{\mathcal F}}
\def\cG{{\mathcal G}}
\def\cH{{\mathcal H}}
\def\cJ{{\mathcal J}}
\def\cL{{\mathcal L}}
\def\cM{{\mathcal M}}
\def\cO{{\mathcal O}}
\def\cR{{\mathcal R}}
\def\cU{{\mathcal U}}
\def\cV{{\mathcal V}}
\def\cW{{\mathcal W}}
\def \bp {\overline{\partial}}
\def \Ker {\text{Ker}}
\def\Aut{\text{Aut}}
\def \Diff{\text{Diff}}
\def \id{\text{id}}
\def\Vol{\text{Vol}}
\def \Diff {\text{Diff}}
\begin{document}

\title{Frankel conjecture and Sasaki geometry}

\author{Weiyong He\footnote{Partially supported by an NSF grant, award No. DMS-1005392.}  \ and Song Sun\footnote{Partially supported by European Research Council award No 247331. }}

\maketitle

\begin{abstract} We classify simply connected  compact Sasaki manifolds of dimension $2n+1$ with positive transverse bisectional curvature. In particular, the K\"ahler cone corresponding to such manifolds must be bi-holomorphic to $\C^{n+1}\backslash \{0\}$. As an application we  recover the  theorem of Mori and Siu-Yau on the Frankel conjecture and extend it to certain orbifold version. The main idea is to deform such Sasaki manifolds to the standard round sphere in two steps, both fixing the complex structure on the K\"ahler cone.  First, we deform the metric along the Sasaki-Ricci flow and obtain a limit  Sasaki-Ricci soliton with positive transverse bisectional curvature.  Then  by varying the Reeb vector field which essentially decreases the volume functional, we deform  the Sasaki-Ricci soliton to a Sasaki-Einstein metric with positive transverse bisectional curvature, i.e. a round sphere. The second deformation is only possible when one treats simultaneously regular and irregular Sasaki manifolds, even if the manifold one starts with is regular (quasi-regular),  i.e. K\"ahler manifolds (orbifolds).
\end{abstract}

%\tableofcontents

\section{Introduction and main results}
In this paper we study compact Sasaki manifolds with positive transverse bisectional curvature. Sasaki geometry, in particular, Sasaki-Einstein manifolds have been studied extensively. Readers are referred to the monograph \cite{BG}, the survey paper \cite{Sparks} and the references therein for the history and recent progress on this subject.

The study of manifolds with positive curvature is one of the most important subjects in Riemannian geometry. There are lots of recent deep progress on this, especially using the technique of Ricci flow, see \cite{BW} and \cite{BS1} for example.  In K\"ahler geometry a natural concept is the positivity of the bisectional curvature. It was conjectured by Frankel \cite{Frankel61} that a compact K\"ahler manifold of complex dimension $n$ with positive bisectional curvature is biholomorphic to the complex projective space $\mathbb{CP}^n$. The Frankel conjecture was proved in later 1970s independently  by Mori \cite{Mori} (he proved the more general Hartshorne conjecture) via algebraic geometry and Siu-Yau \cite{Siu-Yau} via differential geometry.  Sasaki geometry is an odd dimensional companion of K\"ahler geometry, so it is vary natural to ask for the counterpart of the theorem of Mori and Siu-Yau on the Frankel conjecture for Sasaki manifolds. This is the major point of study in this article. We would like to emphasize that this generalization seems to be interesting, in that it provides a uniform framework  which also proves the original Frankel conjecture, by deformation to canonical metrics, as attempted previously by many people(c.f. \cite{Chen-Tian1, Chen-Tian2},  \cite{PSSW}).  Moreover, the use of Sasaki geometry also yields  certain orbifold version of the Frankel conjecture, which seems to be difficult to obtain with the known approaches.  Finally, as already pointed out in \cite{BW}  a \emph{pinching towards constant curvature} proof of the Frankel conjecture using Ricci flow seems not plausible, as there are examples of two dimensional Ricci soliton orbifolds with positive curvature. One of the applications of the results developed in this article is to classify such solitons, in a uniform way. 

Sasaki geometry in dimension $2n+1$  is closely related to K\"ahler geometry in both dimensions $2(n+1)$ and $2n$.  A Sasaki manifold $M$ of dimension $2n+1$ admits, on one hand,  a K\"ahler cone structure on the product $X=M\times\R_{+}$, and on the other hand, a transverse K\"ahler structure on the (local) quotient by the Reeb vector field. For now  we view a Sasaki structure on $M$ as a K\"ahler cone structure on $X$, and we identify $M$ with the link $\{r=1\}$  in $X$. A standard example of a Sasaki manifold is the odd dimensional round sphere $S^{2n+1}$. The corresponding K\"ahler cone is $\C^{n+1}\backslash\{0\}$ with the flat metric.   

A Sasaki manifold admits a  canonical Killing vector field $\xi$, called the \emph{Reeb vector field}. It is given by rotating the homothetic vector field $r\p_r$ on $X$ by the complex structure $J$.  The integral curves of $\xi$ are geodesics, and give rise  to a foliation on $M$, called the \emph{Reeb foliation}. Then there is a K\"ahler structure on the local leaf space of the Reeb foliation, called the \emph{transverse K\"ahler structure}. If the transverse K\"ahler structure has positive bisectional curvature, we say the Sasaki manifold has \emph{positive transverse bisectional curvature}. If the Sasaki manifold has positive sectional curvature, it automatically has positive transverse bisectional curvature, for example, the round metric on $S^{2n+1}$. Indeed for  the round sphere $S^{2n+1}$,  the transverse K\"ahler structure on the leaf space is isometric to the Fubini-Study metric on $\mathbb{CP}^n$. \\

The main goal of this article is to classify compact Sasaki manifolds with positive transverse bisectional curvature. By a homothetic transformation such manifolds always admit Riemannian metrics with positive Ricci curvature, so they  must have finite fundamental group. Therefore without loss of generality, we may assume the manifolds are simply connected. Our main result is

\begin{thm}\label{T-main}
Let $(M, g)$ be a compact simply connected Sasaki manifold of dimension $2n+1$ with positive transverse bisectional curvature, then its K\"ahler cone $(X,  J)$ is biholomorphic to $\C^{n+1}\backslash\{0\}$. Moreover, $M$ is a \emph{weighted Sasaki sphere}, i.e. $M$ is diffeomorphic to the sphere $S^{2n+1}$ and the Sasaki metric is a \emph{simple Sasaki metric} on $S^{2n+1}$.
\end{thm}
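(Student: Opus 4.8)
The plan is to connect $(M,g)$ to the standard round sphere $S^{2n+1}$ through a path of Sasaki structures that all share the \emph{same} underlying complex cone $(X,J)$. Since the complex structure on the cone is held fixed along the entire path and the round sphere has cone $(\C^{n+1}\setminus\{0\},J_{\mathrm{std}})$, this will simultaneously establish the biholomorphism $(X,J)\cong\C^{n+1}\setminus\{0\}$ and, after identifying the link $\{r=1\}$, the diffeomorphism $M\cong S^{2n+1}$ together with the claim that the metric is a simple (weighted) Sasaki metric. The deformation proceeds in two stages: first I deform only the transverse Kähler metric with the Reeb field $\xi$ frozen, using the Sasaki--Ricci flow, to reach a Sasaki--Ricci soliton; then I deform $\xi$ within the Sasaki cone of $(X,J)$, driving the soliton to a Sasaki--Einstein metric. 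Throughout I must verify that positive transverse bisectional curvature is preserved, so that the endpoint is a Sasaki--Einstein metric with positive transverse bisectional curvature, which I will argue is forced to be the round metric.

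\emph{Stage one: the Sasaki--Ricci flow.} Positive transverse bisectional curvature makes the transverse Ricci curvature positive, and I expect it to place us in the transverse-Fano case $c_1^B=a[d\eta]_B$ with $a>0$, in which the normalized flow $\partial_t g^T=-(\mathrm{Ric}^T-g^T)$ is available and keeps $\xi$, the transverse holomorphic structure, and $J$ fixed while evolving only the transverse metric in a fixed basic class. The first key point is that positive transverse bisectional curvature is preserved: the transverse curvature tensor satisfies a reaction--diffusion equation formally identical to the Kähler case, so I would adapt Hamilton's tensor maximum principle (as in the Bando--Mok argument for the Kähler--Ricci flow) to the transverse setting, using that basic quantities attain their extrema on the compact manifold $M$. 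The second point is convergence: with a uniform positivity pinching and the transverse-Fano normalization I expect Perelman-type non-collapsing and curvature estimates to yield subconvergence, modulo transverse holomorphic diffeomorphisms, to a shrinking Sasaki--Ricci soliton $(\xi,g^T_\infty)$; a strong maximum principle rules out splitting, so the limit still has strictly positive transverse bisectional curvature.

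\emph{Stage two: varying the Reeb field.} The soliton $g^T_\infty$ carries a Hamiltonian transverse holomorphic vector field measuring its deviation from being Sasaki--Einstein; equivalently its transverse Futaki invariant need not vanish. To remove this, I would move the Reeb field inside the Sasaki cone $\mathfrak{t}^+$ of $(X,J)$ along the negative gradient of the Martelli--Sparks--Yau volume functional $\mathrm{Vol}(\xi)$, which is strictly convex on $\mathfrak{t}^+$ and whose critical point is characterized by vanishing transverse Futaki invariant. At each Reeb field along this flow I attach the corresponding Sasaki--Ricci soliton (with positivity preserved as in Stage one), and at the unique minimizer of $\mathrm{Vol}$ the soliton becomes genuinely Sasaki--Einstein. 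A Sasaki--Einstein metric with positive transverse bisectional curvature has transverse Kähler--Einstein structure with positive bisectional curvature, hence by the Kähler rigidity statement the transverse metric is Fubini--Study on $\mathbb{CP}^n$ and the Sasaki metric is the round sphere, whose cone is flat $\C^{n+1}\setminus\{0\}$. Since $J$ was fixed throughout, $(X,J)\cong\C^{n+1}\setminus\{0\}$, and the theorem follows.

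The main obstacle is Stage two. Because the minimizing Reeb field need not be regular or quasi-regular even when $\xi$ starts regular, the leaf spaces along the deformation degenerate from manifolds to orbifolds to non-Hausdorff quotients, so I cannot work with any fixed quotient geometry; I must run the whole argument intrinsically on $X$ and $M$, treating regular and irregular Reeb fields on equal footing. Establishing, in this irregular generality, both the existence and uniqueness of the soliton for each Reeb field and the preservation of positive transverse bisectional curvature along the Reeb deformation --- together with the continuity and compactness needed to conclude that the volume-minimizing limit is reached and is Sasaki--Einstein --- is the technical heart of the proof.
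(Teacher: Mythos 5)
Your proposal follows essentially the same two-stage strategy as the paper: Sasaki--Ricci flow to a Sasaki--Ricci soliton with positive transverse bisectional curvature preserved (Bando--Mok maximum principle adapted transversally, Perelman-type estimates as in \cite{He}), then deformation of the Reeb field inside $\ft$ along the negative gradient of the Martelli--Sparks--Yau volume functional, attaching a soliton to each Reeb field via the implicit function theorem, and identifying the Sasaki--Einstein endpoint with the round sphere. You also correctly identify that the entire construction must be run intrinsically on $X$ and $M$ to handle irregular Reeb fields. However, two concrete points that the paper must work to establish are left open in your sketch, and both are genuine gaps rather than routine verifications.

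First, your concluding step ``since $J$ was fixed throughout, $(X,J)\cong\C^{n+1}\setminus\{0\}$'' does not follow directly: the convergence to the Sasaki--Einstein limit (and already the convergence of the Sasaki--Ricci flow to a soliton) is only subsequential and modulo diffeomorphisms, in the Cheeger--Gromov sense, so the limit is a priori not a simple deformation of the structures along the path. The paper closes this with a local rigidity theorem (Theorem \ref{rigidity}): using Grove--Karcher conjugation of close group actions, a transverse Gray stability argument, and a Bochner vanishing $H^{0,1}_B(M,T^{1,0}_B)=0$ under positive transverse bisectional curvature, any nearby Sasaki structure is isomorphic to a simple deformation; one then propagates ``simpleness'' backward from the round limit to the original metric. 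Second, the local existence interval for your Reeb-field gradient flow depends on the initial soliton, and geometric bounds alone do not give a uniform interval because the dimension of the maximal torus $\T$ in $\Aut(\xi,\eta,g)$ can jump up at a Cheeger--Gromov limit; the gradient vector field itself is defined via the soliton field $Y_\zeta\in\ft$, so a jump changes the ambient Lie algebra. The paper handles this by showing jumps can occur only when $\dim\T$ strictly increases, hence at most finitely many times (since $\dim\T\le n+1$ by the moment-map convexity argument), and by proving a uniform implicit-function theorem (Theorem \ref{uniform IFT}) valid when the torus dimension is constant. Your proposal flags ``continuity and compactness'' as the technical heart but does not isolate either of these mechanisms, and without them the deformation argument does not close.
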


Roughly speaking, a \emph{simple Sasaki metric} on $S^{2n+1}$
is a Sasaki metric that can be deformed to the round metric on $S^{2n+1}$ through a \emph{simple deformation}. 
The relevant definitions will be given in Section \ref{S-2}. When $n=1$, our proof implies that any Sasaki structure on $S^3$ is simple and its K\"ahler cone is $\C^{2}\backslash\{0\}$; this result was proved by Belgun \cite{Belgun} as a part of the classification of three dimensional Sasaki manifolds. In a sequel \cite{He-Sun}, we will use the results of this paper together with the technique of Brendle-Schoen \cite{BS} (c.f. also \cite{Gu}) to classify compact Sasaki manifolds with non-negative transverse bisectional curvature. 

As a direct consequence of Theorem \ref{T-main}, we obtain the classification of compact \emph{polarized orbifolds} with positive bisectional curvature.

\begin{cor}\label{orbifold Frankel}
A compact polarized orbifold $(M, J, g,  L)$ with positive bisectional curvature is bi-holomorphic to a finite quotient of a weighted projective space. 
\end{cor}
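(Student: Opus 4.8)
The plan is to deduce the statement from Theorem~\ref{T-main} by means of the orbifold version of the Boothby--Wang correspondence, which turns the polarized K\"ahler orbifold $(M,J,g,L)$ into a quasi-regular Sasaki manifold whose transverse K\"ahler geometry is exactly that of $M$. Concretely, after a homothety I may assume that $c_1(L)$ is represented by the K\"ahler form $\o_g$, so that $[\o_g]$ is an integral orbifold cohomology class; I then take $S$ to be the total space of the associated principal $S^1$-orbibundle $\pi\colon S\to M$, equipped with a connection $1$-form $\eta$ with $d\eta=\pi^*\o_g$. Together with the Reeb field $\xi$ generating the $S^1$-action, this defines a Sasaki structure on $S$ whose transverse K\"ahler structure pulls back to $(J,g)$ on $M$. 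In particular, \emph{positive bisectional curvature of $M$ is precisely positive transverse bisectional curvature of $S$}, which is the hypothesis of Theorem~\ref{T-main}.

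Two points must be settled before Theorem~\ref{T-main} applies. First, $S$ should be a smooth manifold: the total space of the orbibundle is smooth provided each local uniformizing group injects into the structure group $S^1$, i.e. acts faithfully on the fibers of $L$. This can be arranged for a well-formed polarized orbifold by replacing $L$ by the line bundle on which the isotropy acts effectively (and, if necessary, a suitable power), and under this normalization $S$ is a compact quasi-regular Sasaki manifold. Second, $S$ need not be simply connected; but, as in the homothety remark preceding Theorem~\ref{T-main}, positive transverse bisectional curvature forces a metric of positive Ricci curvature on $S$, so $\pi_1(S)$ is finite. I would therefore pass to the universal cover $\widetilde S\to S$, which is again a compact \emph{simply connected} Sasaki manifold with positive transverse bisectional curvature, and whose deck group $\Gamma=\pi_1(S)$ acts by Sasaki automorphisms that preserve the lifted Reeb field and hence commute with the Reeb $S^1$-action.

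Now Theorem~\ref{T-main} applies to $\widetilde S$: its K\"ahler cone is biholomorphic to $\C^{n+1}\backslash\{0\}$ and $\widetilde S$ is a weighted Sasaki sphere, so $\widetilde S$ is diffeomorphic to $S^{2n+1}$ and, under the identification of the cone with $\C^{n+1}$, the Reeb field is conjugate to a weighted Euler field $\xi=\sum_{i=0}^{n}a_i(x_i\p_{y_i}-y_i\p_{x_i})$ with positive weights $a_i$. Since the transverse structure of $\widetilde S$ is quasi-regular (it lifts the quasi-regular structure of $S$), the $a_i$ are commensurable and the leaf space $\widetilde S/S^1$ is the quotient of $\C^{n+1}\backslash\{0\}$ by the corresponding weighted $\C^*$-action, i.e. the weighted projective space $\mathbb{CP}(a_0,\dots,a_n)$ with its standard complex structure. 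Because $\Gamma$ commutes with the Reeb $S^1$, it descends to a finite group of biholomorphisms of $\mathbb{CP}(a_0,\dots,a_n)$, and the biholomorphism $\widetilde S/S^1\cong\mathbb{CP}(a_0,\dots,a_n)$ identifies $M=S/S^1=(\widetilde S/S^1)/\Gamma$ with a finite quotient of a weighted projective space. As every step respects the complex structures, this is a biholomorphism, proving the corollary.

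The main obstacle I anticipate is the orbifold bookkeeping rather than any new geometric input: one must verify that the orbibundle $S$ can genuinely be taken \emph{smooth} (choosing the polarization so the isotropy acts effectively on the fibers, and checking that the local uniformizing charts are compatible with the $S^1$-orbit structure), and that the weighted $\C^*$-quotient of $\C^{n+1}\backslash\{0\}$ is correctly identified, including the common-factor conventions for $\mathbb{CP}(a_0,\dots,a_n)$, so that the group $\Gamma$ accounts for the ``finite quotient'' in the statement. The transfer of the curvature condition and the descent of $\Gamma$ to the quotient are then routine.
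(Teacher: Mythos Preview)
Your proposal is correct and follows essentially the same approach as the paper: construct the unit circle bundle in $L$ to obtain a quasi-regular Sasaki manifold with positive transverse bisectional curvature, then invoke Theorem~\ref{T-main}. The paper dispatches the orbifold case in one sentence (``similar'' to the manifold case, noting that quasi-regular Sasaki manifolds are exactly $U(1)$-bundles over polarized K\"ahler orbifolds), whereas you spell out the universal-cover step and the descent of $\Gamma$ more explicitly; note also that in the paper's convention ``polarized'' already \emph{includes} the faithfulness of the local isotropy action on $L$, so your discussion of ``arranging'' this by replacing $L$ is unnecessary --- smoothness of $S$ is immediate from the hypothesis.
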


The notion \emph{polarized orbifold} is taken from \cite{RT}. By this we mean there is an orbi-line bundle $L$, and  in any orbifold chart $(U_p, L_p, G_p)$ the action of $G_p$ on $L_p$ is faithful. 
As a special case of Corollary \ref{orbifold Frankel},  we obtain an alternative analytic  proof of  Siu-Yau's Theorem.
\begin{cor}[ \cite{Mori}, \cite{Siu-Yau}]\label{Frankel conjecture}
A compact K\"ahler manifold with positive bisectional curvature is bi-holomorphic to the complex projective space. 
\end{cor}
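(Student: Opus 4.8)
The plan is to deduce Corollary \ref{Frankel conjecture} from Corollary \ref{orbifold Frankel} by viewing a smooth manifold as the special case of a polarized orbifold all of whose local isotropy groups are trivial, and then ruling out the genuinely weighted (singular) outcomes using smoothness and simple connectivity.

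First I would record that positive bisectional curvature forces positive Ricci curvature, so $(M,J)$ is Fano; in particular $c_1(M)>0$ is representable by a positive form, $M$ is projective, and $M$ is simply connected (Myers gives finite $\pi_1$, and Kobayashi's theorem gives $\pi_1=0$ for Fano manifolds). Choosing the ample line bundle $L=K_M^{-1}$ together with the trivial orbifold structure, the quadruple $(M,J,g,L)$ becomes a compact polarized orbifold with positive bisectional curvature, so Corollary \ref{orbifold Frankel} applies directly: $M$ is biholomorphic to a finite quotient $\mathbb{P}(a_0,\ldots,a_n)/\Gamma$ of a weighted projective space, which after a standard normalization we may take to be well-formed.

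Second I would use smoothness and simple connectivity to pin the quotient down to $\mathbb{CP}^n$. It is cleanest to argue in the Sasaki picture underlying Corollary \ref{orbifold Frankel}: smoothness of $M$ means that the associated weighted Sasaki sphere, whose K\"ahler cone is $\C^{n+1}\backslash\{0\}$ and whose transverse structure recovers $(M,g)$, has a regular Reeb foliation. On the cone $\C^{n+1}\backslash\{0\}$ the Reeb field is, after diagonalizing the torus action, a weighted field with weights $(a_0,\ldots,a_n)$, and the leaf space $M$ is exactly the quotient $\mathbb{P}(a_0,\ldots,a_n)/\Gamma$. A well-formed $\mathbb{P}(a_0,\ldots,a_n)$ is a smooth manifold only when all weights coincide, i.e.\ only when it is $\mathbb{CP}^n$; moreover, since $M$ is simply connected, $\Gamma$ cannot act freely and nontrivially, whereas any nonfree action would introduce quotient singularities and contradict smoothness. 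Hence $\Gamma$ is trivial, all weights are equal, and $M\cong\mathbb{CP}^n$.

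The genuinely substantive point is this last one, the orbifold-to-manifold rigidity asserting that among all finite quotients of weighted projective spaces only $\mathbb{CP}^n$ is smooth, and I expect it to be the main obstacle. I would handle it through the orbifold universal cover: a smooth simply connected $M$ is its own orbifold universal cover, so the orbifold covering $\mathbb{P}(a_0,\ldots,a_n)\to M$ must be an isomorphism, forcing $\mathbb{P}(a_0,\ldots,a_n)$ to be smooth and therefore $\mathbb{CP}^n$. Care must be taken that the biholomorphism furnished by Corollary \ref{orbifold Frankel} is compatible with the relevant orbifold structures, and that the well-formedness normalization is actually in force, so that the equivalence ``all weights equal $\Leftrightarrow$ smooth'' may be invoked without gap.
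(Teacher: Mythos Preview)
Your argument is correct, but it takes a detour that the paper avoids. You deduce the corollary from Corollary~\ref{orbifold Frankel} and then spend the bulk of the work untangling which finite quotients of weighted projective spaces can be smooth and simply connected, eventually falling back on the Sasaki picture anyway (``smoothness of $M$ means that the associated weighted Sasaki sphere \ldots\ has a regular Reeb foliation''). The paper instead bypasses Corollary~\ref{orbifold Frankel} entirely and applies Theorem~\ref{T-main} directly: it takes the unit circle bundle $\pi:S\to Z$ with first Chern class $c_1(Z)/l$, where $l$ is the Fano index of $Z$, so that $S$ is simply connected; then $S$ with the natural connection metric is a compact simply connected Sasaki manifold with positive transverse bisectional curvature, hence by Theorem~\ref{T-main} a simple Sasaki structure on $S^{2n+1}$. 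The decisive observation is that a simple Sasaki structure on $S^{2n+1}$ is \emph{regular} precisely when the Reeb vector field is proportional to $(1,\ldots,1)$, in which case the leaf space is $\mathbb{CP}^n$. Since $S\to Z$ is a genuine circle bundle over a manifold, the Sasaki structure is regular, and $Z\cong\mathbb{CP}^n$ follows immediately. Your orbifold-universal-cover argument and the well-formedness discussion are a correct but heavier repackaging of this single regularity observation; the paper's route has the advantage that the step you flagged as ``the main obstacle'' simply does not arise.
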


One interesting point here is that our proof of Corollary \ref{orbifold Frankel} and \ref{Frankel conjecture} do  rely on the framework of Sasaki geometry. 
A converse of Theorem \ref{T-main} is also true. 

\begin{thm}\label{T-main-2}
Any simple Sasaki structure on $S^{2n+1}$ can be deformed to a Sasaki-Ricci soliton with positive transverse bisectional curvature, by a transverse K\"ahler deformation. In particular,  a weighted projective space  carries an  orbifold K\"ahler-Ricci soliton with positive bisectional curvature.
\end{thm}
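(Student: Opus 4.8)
The plan is to pass to the transverse picture and combine an existence result for solitons with a continuity argument for positivity. By the structure of simple Sasaki structures developed in Section~\ref{S-2}, a simple Sasaki structure on $S^{2n+1}$ has K\"ahler cone $\C^{n+1}\setminus\{0\}$ with a weighted Reeb vector field $\xi_{\mathbf a}$ determined by a positive weight vector $\mathbf a=(a_0,\dots,a_n)$, $a_i>0$, and its transverse K\"ahler structure is that of a weighted projective space $\mathbb{CP}^n_{\mathbf a}$, which is a toric Fano orbifold. Its basic first Chern class $c_1^B$ is positive and proportional to the transverse K\"ahler class, as needed for a Sasaki-Ricci soliton to be sought in that class. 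Since a transverse K\"ahler deformation keeps $(X,J)$ and $\xi_{\mathbf a}$ fixed and varies only the transverse K\"ahler potential inside this fixed class, the assertion is exactly that the class contains a Sasaki-Ricci soliton of positive transverse bisectional curvature.

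First I would establish existence of the soliton. Because the transverse geometry is a toric Fano orbifold, a transverse K\"ahler-Ricci soliton exists by the toric existence theory of Wang--Zhu in its orbifold form due to Futaki--Ono--Wang; equivalently one may run the Sasaki-Ricci flow inside the basic class and appeal to the convergence to a soliton already established for the forward direction in Theorem~\ref{T-main}. Lifted to the weighted sphere this is a Sasaki-Ricci soliton reached by a transverse K\"ahler deformation, and its restriction to the transverse orbifold is the orbifold K\"ahler-Ricci soliton on $\mathbb{CP}^n_{\mathbf a}$ claimed in the final sentence.

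The remaining and main point is positivity of the transverse bisectional curvature, which I would prove by continuity in the weight vector $\mathbf a$ over the connected positive orthant. At $\mathbf a=(1,\dots,1)$ the soliton is the round, transverse Fubini--Study metric, which has positive transverse bisectional curvature, so the set $P$ of weights whose soliton is positively curved is nonempty. Openness of $P$ follows from the smooth dependence of the soliton on $\mathbf a$ — using uniqueness of K\"ahler-Ricci solitons up to automorphism (Tian--Zhu) to fix a continuous family — together with strictness of the curvature inequality. For closedness I would take $\mathbf a_k\to\mathbf a_\infty$ in $P$ with $\mathbf a_\infty$ still interior, use parabolic and elliptic a priori estimates to pass to the limit so that the limiting soliton has nonnegative transverse bisectional curvature, and then invoke the strong maximum principle for the curvature operator (the same tensor-maximum-principle argument used in the forward direction) to upgrade nonnegativity to strict positivity, the only alternative — a local splitting of the transverse metric — being incompatible with the irreducible toric Fano orbifold $\mathbb{CP}^n_{\mathbf a}$. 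Since $P$ is then open and closed in the connected positive orthant, $P$ is everything.

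The step I expect to be the main obstacle is precisely this closedness argument: one must produce a genuinely continuous family of solitons in $\mathbf a$ (not merely one defined up to unknown automorphisms) and apply the strong maximum principle on a space carrying orbifold singularities. The former requires control of the linearization of the soliton equation and nondegeneracy through the modified Futaki invariant; the latter requires that the maximum principle and the attendant splitting dichotomy remain valid across the orbifold loci of $\mathbb{CP}^n_{\mathbf a}$. Making these estimates uniform as $\mathbf a$ ranges over a compact set of interior weights is the delicate part of the proof.
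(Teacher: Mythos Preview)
Your approach is essentially the same as the paper's: existence from \cite{FOW}, then an open--closed continuity argument over the connected family of Reeb weights, with the round sphere as base point. The paper dispatches closedness by a single appeal to Proposition~\ref{P-8-1}, which says that a Cheeger--Gromov limit of Sasaki--Ricci solitons with positive transverse bisectional curvature again has positive transverse bisectional curvature; the mechanism there is to run the Sasaki--Ricci flow from the limit soliton and use the tensor maximum principle (Lemmas~\ref{l-8-2} and~\ref{l-8-1}) rather than a static splitting dichotomy.

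The two concerns you flag as obstacles are in fact non-issues once you stay in the Sasaki framework. First, you do not need a gauge-fixed continuous family of solitons: Proposition~\ref{P-8-1} is stated for an arbitrary sequence converging in the Cheeger--Gromov sense, so compactness of the soliton family (which follows from the uniform geometry bounds in Section~\ref{S-7}, or more simply from smooth dependence on $\mathbf a$ over a compact set of interior weights) already suffices. Second, the orbifold maximum-principle worry disappears entirely because the Sasaki--Ricci flow and the tensor maximum principle run on the smooth manifold $S^{2n+1}$, not on the quotient $\mathbb{CP}^n_{\mathbf a}$; this is precisely the advantage of the Sasaki viewpoint. So your proof is correct, but you are making the closedness step look harder than it is.
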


The existence of Sasaki-Ricci solitons on weighted Sasaki sphere follows from the result of Futaki-Ono-Wang \cite{FOW} on toric Sasaki manifolds. We will prove that these Sasaki-Ricci solitons all have positive transverse bisectional curvature. We remark that here these metrics are not explicit, and  we are not able to find a general way of producing an explicit orbifold  K\"ahler metric with positive bisectional curvature even on weighted projective spaces. \\

Before we sketch the main ideas to prove Theorem \ref{T-main}, it is valuable to recall the known proofs of the Frankel conjecture. In both \cite{Mori} and \cite{Siu-Yau}
 the existence of rational curves  plays an essential role.  Mori proved a more general result that a Fano manifold always  contains a rational curve by a bend-and-break argument and the algebraic geometry in positive characteristic; while Siu-Yau used the Sacks-Uhlenbeck argument to produce a stable harmonic sphere, and exploited the positivity of bisectional curvature to prove that such a sphere is either holomorphic or anti-holomorphic. A key ingredient in the proof of Siu-Yau is a characterization of the complex projective space by Kobayashi-Ochiai \cite{KO}.  There is, to the authors' knowledge so far, no  analogue of this in Sasaki geometry to characterize a weighted Sasaki sphere, or particularly a weighted projective space. This seems to be a major obstacle for adapting the approach of Siu-Yau to the Sasaki case.
 
We proceed along a different track in this paper, that is, by  deforming a geometric structure naturally to a standard one that can be classified more.   
In early 1980s Hamilton \cite{Hamilton82} introduced the \emph{Ricci flow},  as a powerful tool  to evolve Riemannian metrics towards canonical models.  On K\"ahler manifolds the Ricci flow preserves the K\"ahler condition. It is called the \emph{K\"ahler-Ricci flow} and was first studied by Cao \cite{Cao}. Bando \cite{Bando} (for complex dimension three) and Mok \cite{Mok} (for all dimensions) studied the K\"ahler-Ricci flow on compact manifolds with positive(non-negative) bisectional curvature. They
proved that  this positivity (non-negativity) is preserved along the flow, using Hamilton's maximum principle for tensors.  Since then, there have been many attempts  to seek a proof of the Frankel conjecture using Ricci flow, and there has been extensive study of K\"ahler-Ricci flow with positive (nonnegative) bisectional curvature. We mention  \cite{Chen-Tian1, Chen-Tian2, CCZ, PSSW} to name a few.  Note that Berger \cite{Berger} proved that a K\"ahler-Einstein metric with positive sectional curvature is isometric to the complex projective space with the Fubini-Study metric; this result was later generalized to K\"ahler-Einstein manifolds with positive bisectional curvature by Goldberg-Kobayashi \cite{GK} where they first introduced the concept of holomorphic bisectional curvature.   One can get an alternative proof of the Mori's and Siu-Yau's theorem on the Frankel conjecture if  the K\"ahler-Ricci flow converges to a K\"ahler-Einstein metric with positive bisectional curvature.
 
Later on Perelman introduced many revolutionary ideas, including the by now well-known entropy functionals \cite{Perelman01} into the study of the Ricci flow, which lead him to the solution of the Poincar\'e conjecture and Thurston's geometrization conjecture. He also proved very deep results for the  K\"ahler-Ricci flow on Fano manifolds,  namely, that the scalar curvature and the diameter are uniformly bounded along the flow; details of his results can be found in \cite{Sesum-Tian}. Combining this with Mok's results,  it then easily follows that the K\"ahler-Ricci flow converges by sequence  to a K\"ahler-Ricci soliton up to diffeomorphisms, if the initial metric has positive (nonnegative) bisectional curvature.  Using the Morse-Bott theory and dimension induction, Chen, Tian and the second author \cite{CST}  gave a direct proof that the limit K\"ahler-Ricci soliton, and hence the original K\"ahler manifold is biholomorphic to the complex projective space. The proof  still depends on producing rational curves and  applying the results of Kobayashi-Ochiai.  Along the Ricci flow we only know that a K\"ahler-Ricci soliton with positive bisectional curvature must be K\"ahler-Einstein \emph{a posteriori}, and a direct proof of this is still lacking. 

Given the analogue between them, many concepts, techniques and results can be carried over from K\"ahler geometry to Sasaki geometry with certain modifications.  Sasaki-Ricci flow was introduced by Smoczyk-Wang-Zhang \cite{SWZ} as a counterpart of K\"ahler-Ricci flow; it  deforms a Sasaki metric such that its transverse K\"ahler metric is deformed by the transverse K\"aher-Ricci flow.  Independently, Collins \cite{Collins} and the first author \cite{He} generalized Perelman's entropy and corresponding results in the K\"ahler-Ricci flow on Fano manifolds to the Sasaki setting by considering only \emph{basic geometric data}, i.e. geometric quantities that are invariant along the Reeb vector fields. Then the $W$ functional is  monotone along Sasaki-Ricci flow, and the (transverse) scalar curvature and  the diameter are both uniformly bounded along the flow. Furthermore, the first author studied the Sasaki-Ricci flow with positive (nonnegative) transverse bisectional curvature. It is shown that the flow  converges to a Sasaki-Ricci soliton with positive transverse bisectional curvature. It is also proved in \cite{He} that a compact, simply connected Sasaki-Einstein manifold with positive transverse bisectional curvature has constant transverse holomorphic sectional curvature, hence is the round sphere.  

Here comes an essential difference from the K\"ahler setting. The fact that compact K\"ahler-Ricci solitons with positive bisectional curvature must be K\"ahler-Einstein might simply be a coincidence, since if we consider the general Sasaki setting, then a compact Sasaki-Ricci soliton  with positive transverse bisectional curvature does not have to be Sasaki-Einstein. Indeed L.F. Wu(\cite{Wu}, see also \cite{CW}) proved the existence of a non-trivial Ricci soliton with positive curvature on $S^2$ with certain orbifold singularity; this implies the existence of a non-Einstein Sasaki-Ricci soliton on $S^3$ with positive transverse curvature. Recently  Futaki-Ono-Wang \cite{FOW} proved the existence of Sasaki-Ricci solitons on compact toric Sasaki manifolds; their results produced  a family of toric Sasaki-Ricci solitons on the weighted Sasaki sphere $S^{2n+1}$; the positivity condition can be assured if a Sasaki-Ricci soliton is close to the round metric on the sphere.  Actually Theorem \ref{T-main-2}  asserts that all toric Sasaki-Ricci solitons in this family have positive transverse bisectional curvature. In short, the model structure in the Sasaki setting is not a unique one, but a whole family. 

The problem is now reduced to classifying Sasaki-Ricci solitons with positive transverse bisectional curvature. Our strategy is to deform such solitons to a Sasaki-Einstein metric with positive transverse bisectional curvature. Note that there is no such corresponding deformation within the framework of K\"ahler geometry. Such a flexibility in Sasaki setting seems to be one of the advantages of the new approach. This not only implies that a K\"ahler-Ricci soliton with positive bisectional curvature is K\"ahler-Einstein, hence gives a new analytic proof of Siu-Yau theorem, but also allows us to generalize the results to the Sasaki setting (Theorem \ref{T-main}). 

To carry out the deformation of Sasaki-Ricci solitons, we first recall the theory of volume minimization due to Martelli-Sparks-Yau \cite{MSY}. It is observed in \cite{MSY} that  the volume of a compact Sasaki manifold  is equivalent to the Einstein-Hilbert functional, and is a function of the Reeb vector field only. Fix the complex structure on the K\"ahler cone $(X, J)$ and a maximal compact torus $\T$ in the automorphism group $\Aut(X, J)$, they obtained a beautiful variational picture of the volume functional on the Lie algebra of $\T$. In particular, the functional is convex  and its critical point, if exists, is the Reeb vector field for the putative Sasaki-Einstein metric; moreover, the first variation of the volume functional can be interpreted  as the Futaki invariant (see also \cite{FOW}). It is then very natural to deform  Reeb vector fields to reduce the volume functional, and to deform Sasaki-Ricci solitons correspondingly,  with the hope to reach a critical Reeb vector field where we end up with a desired Sasaki-Einstein metric. This is the heuristic strategy we take. Along the way we also develop the rudiments for the theory of this new deformation, and we hope it will also be useful in more general setting (c.f. Section 6).  \\

Now we outline the organization of the article.  In Section \ref{S-2} we set up various definitions. In Section \ref{S-2-1} we give a gentle introduction to Sasaki geometry. In particular, we recall the notion of transverse bisectional curvature. In Section \ref{S-2-2} we introduce the notion of a \emph{Reeb cone} and a \emph{simple deformation} of Sasaki structures, for our purpose in this paper.  In Section \ref{S-2-5} we introduce  weighted Sasaki spheres and simple Sasaki structures on them, which form the canonical models for our study.  In Section \ref{S-2-3} we recall the notion of  a Sasaki-Ricci soliton. Section \ref{S-2-4} studies the normalization we use when we deform the Reeb vector fields.

The main geometric study is in Section \ref{S-3}.   In Section \ref{S-3-2} and \ref{S-3-3} we  study the volume functional and Perelman's $\mu$ functional. These provide \emph{a priori} geometric bounds for our deformation.  In Section \ref{S-3-4} we carry out the above deformation picture to prove Theorem \ref{T-main},  \ref{T-main-2}, and  Corollary \ref{orbifold Frankel},  \ref{Frankel conjecture}, modulo technical results proved in Section 4 and 5. 

In Section \ref{S-4} we study the local property of the deformation, using implicit function theorem to prove two technical results. Section \ref{S-4-1} is concerned with the local deformation of Sasaki-Ricci solitons, and in Section \ref{S-4-2} we prove the rigidity of Sasaki manifolds with positive transverse bisectional curvature. This rigidity is crucial here since in complex geometry one often meets the problem of \emph{jumping phenomenon}. 
In Section \ref{S-5} we study compactness of a sequence of  Sasaki-Ricci solitons with positive transverse bisectional curvature. In Section \ref{S-9} we discuss related problems.  \\

\textbf{Acknowledgements}: We would like to thank Prof. Xiuxiong Chen and Prof. Simon Donaldson for warm encouragements. We are also grateful to Hans-Joachim Hein for valuable discussions, and Prof. Dmitri Panov and Prof. Richard Thomas for their interest in this work. We would also like to thank the referee for various suggestions and comments that greatly improve the exposition of this paper and for pointing out some inaccuracy of the previous version.

\section{Preliminaries in Sasaki geometry}\label{S-2}

Sasaki geometry has many equivalent descriptions. We  will largely use the formulation by  K\"ahler cones; see, for example, \cite{MSY} for a nice reference. It can also be defined in terms of metric contact geometry or transverse K\"ahler geometry; see \cite{BG}, for references.  

\subsection{Sasaki manifolds}\label{S-2-1}
Let $M$ be a compact differentiable manifold of dimension $2n+1 (n\geq 1)$. A \emph{Sasaki structure} on $M$ is defined to be a K\"ahler cone structure on $X=M\times \R_{+}$, i.e. a K\"ahler metric $(g_X, J)$ on $X$  of the form $$g_X=dr^2+r^2g,$$ where $r>0$ is a coordinate on $\R_{+}$, and $g$ is a Riemannian metric on $M$. We call $(X, g_X, J)$ the \emph{K\"ahler cone} of $M$. The vertex is not viewed as part of the cone throughout this paper. 
We also identify $M$ with the link $\{r=1\}$ in $X$ if there is no ambiguity.  Because of the cone structure, the K\"ahler form on $X$ can be expressed as 
$$\omega_X=\frac{1}{2}\sqrt{-1}\p\bp r^2=\frac{1}{4}dd^c r^2.$$
We denote by $r\p_r$ the homothetic vector field on the cone, which is easily seen to be a real holomorphic vector field. 
A tensor $\alpha$ on $X$ is said to be of homothetic degree $k$ if 
$$\cL_{r\p_r} \alpha=k\alpha.$$
In particular, $\omega$ and $g$ have homothetic degree two, while $J$ and $r\p_r$ has homothetic degree zero.
We define the \emph{Reeb vector field} $$\xi=J(r\p_r).$$
Then $\xi$ is a holomorphic Killing field on $X$ with homothetic degree zero. Let $\eta$ be the dual one-form to $\xi$:
 \[\eta(\cdot)=r^{-2}g_X(\xi, \cdot)=d^c \log r=\i (\bp-\p)\log r\ .\] 
We also use $(\xi, \eta)$ to denote the restriction of them on $(M, g)$.  Then we have 
\begin{itemize}
\item $\eta$ is a contact form on $M$, and $\xi$ is a Killing vector field on $M$ which we also call the Reeb vector field;
\item $\eta(\xi)=1, \iota_{\xi} d\eta(\cdot)=d\eta (\xi, \cdot)=0$;
\item the integral curves of $\xi$ are geodesics.  
\end{itemize}

The Reeb vector field $\xi$ defines a foliation $\cF_\xi$ of $M$ by geodesics. There is a classification of Sasaki structures according to the global property of the leaves. If all the leaves are compact, then $\xi$ generates a circle action on $M$, and the Sasaki structure is called {\it quasi-regular}. In general this action is only locally free, and we get a polarized orbifold structure on the leaf space. If the circle action is globally free, then the Sasaki structure is called {\it regular}, and the leaf space is a polarized K\"ahler manifold. If $\xi$ has a non-compact leaf the Sasaki structure is called {\it irregular}.    Readers are referred to Section \ref{S-2-5} for examples. In the present paper the regularity of a Sasaki structure will not be essential. 

There is an orthogonal decomposition of the tangent bundle \[TM=L\xi\oplus \cD,\] where $L\xi$ is the trivial  bundle generalized by $\xi$, and $\cD=\Ker (\eta)$.
The metric $g$ and the contact form $\eta$ determine a $(1,1)$ tensor field $\Phi$ on $M$ by
\[
g(Y, Z)=\frac{1}{2} d\eta(Y, \Phi Z), Y, Z\in \Gamma(\cD). 
\]
$\Phi$ restricts to an almost complex structure on $\cD$: \[\Phi^2=-\mathbb{I}+\eta\otimes \xi. \] 

Since both $g$ and $\eta$ are invariant under $\xi$, there is a well-defined K\"ahler structure $(g^T, \omega^T, J^T)$ on the local leaf space of the Reeb foliation. We call this a \emph{transverse K\"ahler structure}. In the quasi-regular case, this is the same as the K\"ahler structure on the quotient. Clearly 
\[
\o^T=\frac{1}{2}d\eta. 
\]
The upper script $T$ is used to denote both the transverse geometric quantity, and the corresponding quantity on the bundle $\cD$. For example we have on $M$
$$g=\eta\otimes  \eta+g^T.$$
From the above discussion it is not hard to see that there is an intrinsic formulation of a Sasaki structure as a compatible integrable pair $(\eta, \Phi)$, where $\eta$ is a contact one form and $\Phi$ is a almost CR structure on $\mathcal D=\Ker \eta$. Here ``compatible" means  first that 
$d\eta(\Phi U, \Phi V)=d\eta(U, V)$ for any $U, V\in \mathcal D$, and $d\eta(U, \Phi U)>0$ for any non zero $U\in \mathcal D$. Further we require $\mathcal L_{\xi}\Phi=0$, where $\xi$ is the unique vector field with $\eta(\xi)=1$, and $d\eta(\xi, \cdot)=0$. 
$\Phi$ induces a splitting $$\mathcal  D\otimes \C=\mathcal D^{1,0}\oplus \mathcal D^{0,1}, $$
 with $\overline{\mathcal D^{1,0}}=\mathcal D^{0,1}$. 
``Integrable" means that $[\mathcal D^{0,1}, \mathcal D^{0,1}]\subset \mathcal D^{0,1}$. This is equivalent to that the induced almost complex structure on the local leaf space of the foliation by $\xi$ is integrable. For more discussions on this, see \cite{BG} Chapter 6. 

The Sasaki structure on $M$ is determined by the triple $(\xi, \eta, g)$. \footnote{Indeed, $\xi$ and $\eta$ are determined by each other, but we keep the notation here in order to emphasis both the Reeb vector field and the contact form} From now on, we will  use the notation $(M, \xi, \eta, g)$ to denote a Sasaki manifold. 
By an easy computation for any tangent vector $Y$, 
\begin{equation}\label{e-2-1}
R(Z, \xi)Y=g(\xi, Y)Z-g(Z, Y)\xi. 
\end{equation}
It follows  that the sectional curvature of any tangent plane in $M$ containing $\xi$ has to be $1$; or equivalently, the sectional curvature of any tangent plane in $X$ containing either $\p_r$ or $\xi$, is zero. Hence if a Sasaki manifold $(M, g)$ of dimension $2n+1$ is Einstein, then the Einstein constant must be $2n$, i.e.
\[
Ric=2n g;
\]
correspondingly, the K\"ahler cone $(X, g_X, J)$ is then a Ricci flat cone, i.e.
\[
Ric_X=Ric-2ng=0. 
\]
One can introduce the transverse connection $\nabla^T$ and transverse curvature operator $R^T(Y, Z)W$ for $Y, Z,$ $W\in\Gamma(\cD)$. 
A straightforward computation shows that
\[
Ric(Y, Z)=Ric^T(Y, Z)-2g^T(Y, Z), Y, Z\in \Gamma(\cD).
\]
Hence the Sasaki-Einstein equation can also be written as a transverse K\"ahler-Einstein equation:
\begin{equation}\label{e-2-2}
Ric^T-2(n+1) g^T=0. 
\end{equation}

We are  interested in transverse holomorphic bisectional curvature. It has been studied recently \cite{Zhang, He}.  We recall some  definitions. 
\begin{defi}Given two $\Phi$-invariant tangent planes $\sigma_1, \sigma_2$ in $\cD_x\subset T_xM$, the \emph{transverse holomorphic bisectional curvature} $H^T(\sigma_1, \sigma_2)$ is defined as
\[
H^T(\sigma_1, \sigma_2)=\langle R^T(Y, JY)JZ, Z\rangle,
\]
where $Y\in \sigma_1, Z\in \sigma_2$ are both of unit length.  We define the transverse holomorphic sectional curvature of a $\Phi$-invariant tangent plane as
$$H^T(\sigma)=H^T(\sigma, \sigma).$$
\end{defi}
It is easy to check these are well-defined. For brevity, we will simply say ``transverse bisectional curvature" instead of ``transverse holomorphic bisectional curvature".

\begin{defi}For $x\in M$,  we say the transverse bisectional curvature is positive (or nonnegative) at $x$ if $H^T(\sigma_1, \sigma_2)$ is positive for any two $\Phi$ invariant planes $\sigma_1, \sigma_2$ in $\cD_x$. We say $M$ has positive (nonnegative) transverse  bisectional curvature if $H^T$ is positive (nonnegative) at any point $x\in M$; 
\end{defi}

It is often convenient to introduce  transverse holomorphic coordinates. Let $(z^1, \cdots, z^n)$ be a  holomorphic chart on a local leaf space around $x$. Then the transverse bisectional curvature is positive at $x$ if and only if for any two non zero tangent vectors $u=\sum u^i\frac{\p}{\p z^i}$, and $v=\sum v^j\frac{\p}{\p z^j}$, 
$$\langle R^T(u, \bar{v})v, \bar{u}\rangle= R^T_{i\bar{j}k\bar{l} }u^i u^{\bar{j}} v^k v^{\bar{l}}>0.$$

The transverse bisectional curvature determines the transverse sectional curvature, so by (\ref{e-2-1}) it determines the sectional curvature of $M$. We have the following classification, which is the starting point of our study.  

\begin{lem} \label{SE positive}
A compact simply connected Sasaki-Einstein manifold with positive transverse bisectional curvature is isomorphic to   the standard Sasaki structure on $S^{2n+1}$, or equivalently, the K\"ahler cone is isometric to the standard flat cone $\C^{n+1}\setminus \{0\}$. 
\end{lem}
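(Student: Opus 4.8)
The statement is the odd-dimensional companion of the Berger and Goldberg--Kobayashi theorem that a K\"ahler--Einstein metric with positive bisectional curvature has constant holomorphic sectional curvature, and the plan is to prove it by transplanting the same heat-equation-plus-maximum-principle mechanism to the transverse K\"ahler structure. First I would reformulate everything transversally: by (\ref{e-2-2}) the Sasaki--Einstein condition is exactly the transverse K\"ahler--Einstein condition $Ric^T=2(n+1)g^T$, so on the local leaf space the transverse metric $g^T$ is a positive K\"ahler--Einstein metric. In particular $(M,\xi,\eta,g)$ is a stationary solution of the normalized transverse K\"ahler--Ricci (Sasaki--Ricci) flow $\partial_t\o^T=-Ric^T+2(n+1)\o^T$, so every basic curvature quantity is independent of time along this flow.

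Next I would write down the equation satisfied by the transverse curvature tensor $R^T_{i\bar jk\bar l}$ along the Sasaki--Ricci flow. Since the flow deforms only the transverse K\"ahler metric and all the tensors in question are basic, Mok's computation in the K\"ahler case \cite{Mok} carries over to the transverse setting essentially verbatim: $R^T$ satisfies a heat-type equation of the schematic form $\partial_t R^T=\Delta_B R^T+Q(R^T)+(\text{lower order})$, where $\Delta_B$ is the basic Laplacian and $Q$ is the usual quadratic reaction term built from the transverse bisectional curvature. Because $g$ is Sasaki--Einstein, the left-hand side vanishes, so $R^T$ obeys a stationary, Lichnerowicz-type elliptic equation in which $Q(R^T)$ is balanced against $\Delta_B$ and the Einstein normalization term.

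The rigidity step is where the positivity hypothesis enters. I would consider the function on $M$ assigning to each $x$ the largest transverse holomorphic sectional curvature over unit vectors in $\cD_x$ (equivalently, the top eigenvalue of the transverse bisectional curvature operator). Hamilton's maximum principle for tensors, transplanted to basic tensors along the flow exactly as in \cite{Chen-Tian1}, first shows that positivity of $H^T$ is preserved; more importantly, the strong (equality-case) maximum principle applied to the stationary equation forces the null directions of the relevant curvature operator to be invariant under transverse parallel transport. Examining the sign of $Q(R^T)$ on the extremal directions, the only possibility compatible with the stationary equation together with global positivity of $H^T$ is that $R^T$ has the algebraic form of constant transverse holomorphic sectional curvature,
\[
R^T_{i\bar jk\bar l}=c\,(g^T_{i\bar j}g^T_{k\bar l}+g^T_{i\bar l}g^T_{k\bar j}),\qquad c>0.
\]
Finally, a simply connected Sasaki manifold with constant transverse holomorphic sectional curvature is classified by Tanno \cite{Tanno}; with $c>0$ the only model is the standard Sasaki structure on $S^{2n+1}$, whose K\"ahler cone is the flat $\C^{n+1}\setminus\{0\}$, which is the desired conclusion.

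The main obstacle I anticipate is the rigidity step rather than the preservation of positivity. One must justify Hamilton's tensor maximum principle, and crucially its strong form, for \emph{basic} tensors on the transverse bundle $\cD$, since the leaf space is only a local (possibly orbifold) quotient and $\Delta_B$ is degenerate along $\xi$; this requires working consistently with basic geometric data rather than honest tensors on a smooth base. The genuinely delicate point is to extract from the equality case the full algebraic rank condition on $R^T$ above, and not merely a transverse holonomy splitting --- this is precisely where the positivity of $H^T$ must be used to rule out the reducible case, and it carries the real content of the lemma.
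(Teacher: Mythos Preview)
Your proposal is correct and follows essentially the same approach as the paper: the lemma is not proved in the paper itself but is cited from \cite{He}, where it is established ``using the maximum principle as in \cite{Chen-Tian1}'' --- precisely the strategy you outline of viewing the Sasaki--Einstein metric as a stationary solution of the Sasaki--Ricci flow, applying the transverse evolution equation for $R^T$, and using Hamilton's tensor maximum principle on basic tensors to force constant transverse holomorphic sectional curvature, then invoking Tanno's classification. Your identification of the delicate points (the basic/transverse version of the strong maximum principle and the equality-case analysis) is accurate; one small remark is that in practice one typically works with the \emph{minimum} of the transverse holomorphic sectional curvature rather than the maximum, but this does not affect the validity of the scheme.
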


\begin{proof}
This is essentially a known result. In \cite{He}, using the maximum principle as in \cite{Chen-Tian1}, it is proved that such manifolds must have constant transverse holomorphic bisectional curvature $1$. Equivalently,  this Sasaki structure has constant $\Phi$-holomorphic sectional curvature $1$. 
S. Tanno \cite{Tanno1} has given a full classification of simply-connected Sasaki manifolds with constant $\Phi$-holomorphic sectional curvature. In particular, he proved that a
simply connected Sasaki manifolds with constant $\Phi$-holomorphic sectional curvature $1$ is isometric (as a Sasaki structure) to the standard Sasaki structure on $S^{2n+1}(1)$, see Proposition 4.1 in \cite{Tanno1}. While the Kahler cone corresponding to the standard Sasaki structure on $S^{2n+1}(1)$ is just the standard flat cone $\C^{n+1}\setminus \{0\}$.\end{proof}

\subsection{Deformation of Sasaki structures}\label{S-2-2} 
Let $(M, \xi, \eta, g)$ be a given Sasaki structure. Note that for any positive constant $\l\neq 1$, the naive scaling $(M, \l g)$ is not a Sasaki metric, for example, by \eqref{e-2-1}. But there is a well-known replacement in the Sasaki setting, called \emph{D-homothetic transformation}, that is introduced by S. Tanno \cite{Tanno} (we shall use homothetic transformation for simplicity). It is induced by the transformation  $\xi\mapsto\l^{-1} \xi$ and $\eta\mapsto \l \eta$; the corresponding metric is then given by
\[
g_\l=\l^2 \eta\otimes \eta+\l g^T. 
\] 
Hence the transverse K\"ahler metric is rescaled indeed, but the scaling factor is different from that along the Reeb vector field direction.  On the cone $X$,  this can be realized by the transformation $r\mapsto \tilde r=r^\l$, and the K\"ahler form is given by
\[
\tilde{\o}=\frac{\i}{2} \p\bp \tilde r^2=\frac{\i}{2} \p \bp r^{2\lambda}. 
\]
 In the present paper we will fix a particular scaling normalization, and it will be specified later (see \eqref{normalization}). 

The deformations of Sasaki structures on $M$ that we are interested in will all be induced by a deformation of the K\"ahler cone metrics on $X$, with a fixed complex structure $J$, i.e. a deformation of the K\"ahler potentials $r^2$.  We first consider \emph{transverse K\"ahler deformation}, as discussed in \cite{MSY, FOW}. This is  a special case of a \emph{Type II deformation} introduced in \cite{BGM}. 
Given a Sasaki structure $(M, \xi, \eta, g)$ and its K\"ahler cone $(X, g_X, J)$. 
We consider all K\"ahler cone metrics on $(X, J)$ with Reeb vector field $\xi$. This is equivalent to fixing the homothetic vector field $r\p_r=-J\xi$. Let $\tilde r^2/2$ be the K\"ahler potential of another such K\"ahler cone metric $\tilde{g}$. Then we have
\[
\tilde r\p_{\tilde r}=r\p_r. 
\] 
Writing $\tilde r^2=r^2 \exp(2\phi)$, the condition becomes $\p_r\phi=0$.
Note that the K\"ahler cone condition implies $\cL_\xi \tilde r=0$. It follows that
$\cL_{\xi} \phi=0.$
Hence $\phi$ can also be considered as  a \emph{basic function} on $M$. 
On the cone $(X, J)$  we have
\[
\tilde \eta=J\left(d \log \tilde r\right)=\eta+d^c\phi.
\]
We summarize the discussion above as follows,

\begin{defi} Let $(M, \xi, \eta, g)$ be a Sasaki structure and $(X, J)$ be the underlying complex manifold of its Kahler cone. A \emph{transverse Kahler transformation} is induced by a \emph{basic function} $\phi$ on $M$ such that $(X, J, \xi)$ remains unchanged and the Sasaki structure is induced by the contact form $\tilde \eta=\eta+d^c \phi$ and $\tilde r=r\exp(\phi)$. 
\end{defi}

It is also very natural to present this deformation in terms of \emph{basic forms} on the Sasaki manifold $M$; see \cite{FOW, BG}, for example, for nice references. First we recall,
\begin{defi} A $p$-form $\theta$ on $M$ is called basic if
\[
\iota_\xi \theta=0, L_\xi \theta=0.
\]
Let $\Lambda^p_B$ be the sheaf of germs of basic $p$-forms and $\Omega^p_B=\Gamma(S, \Lambda^p_B)$ the space of  smooth sections of $\Lambda^p_B$.  
\end{defi}
The exterior differential preserves basic forms and we set $d_B=d|_{\Omega^p_B}$. 
Thus the subalgebra $\Omega_{B}(\cF_\xi)$ forms a subcomplex of the de Rham complex, and its cohomology ring $H^{*}_{B}(\cF_\xi)$  is called the {\it basic cohomology ring}. In particular, there is a transverse Hodge theory \cite{EKAH86, KT87, Ton97}. 
The transverse Hodge star operator $*_{B}$ is defined in terms of the usual Hodge star by
\[
*_{B} \alpha=*(\eta\wedge \alpha). 
\]
The adjoint $d^{*}_B: \Omega^p_{B}\rightarrow \Omega^{p-1}_B$ of $d_B$ is
\[
d^*_{B}=-*_{B} d_B *_{B}. 
\]
The {\it basic Laplacian} operator is defined to be $\t_B=d_B d^{*}_B+d^{*}_Bd_B$. 
When $(M, \xi, \eta, g)$ is a Sasaki structure, there is a natural splitting of $\Lambda^p_B\otimes \C$ such that
\[
\Lambda^p_B\otimes \C=\oplus \Lambda^{i, j}_B,
\]
where $\Lambda^{i, j}_B$ is the bundle of type $(i, j)$ basic forms. We thus have the well-defined operators
\[
\begin{split}
\p_B: \Omega^{i, j}_B\rightarrow \Omega^{i+1, j}_B,\\
\bar\p_B: \Omega^{i, j}_B\rightarrow \Omega^{i, j+1}_B.
\end{split}
\]
Then we have $d_B=\p_B+\bar \p_B$. 
Set $d^c_B=\i\left(\bar \p_B-\p_B\right);$ then
\[
d_Bd_B^c=2\i\p_B\bar\p_B, d_B^2=(d_B^c)^2=0.
\]
The transverse K\"ahler form defines a basic cohomology class $[\o^T]_B$.

Now we return to the above deformation.
$\tilde\eta$ could be viewed as the contact one-form of a new Sasaki structure on $M$ by pulling back through the embedding of $M$ into $X=M\times \R_{+}$ as $\{\tilde r=1\}=\{r=e^{-\phi(x)}\}$.  It is straightforward to check that $\xi$,  $\eta$ and $d^c\phi$ are invariant under the diffeomorphism  $$F_{\phi}: X\rightarrow X; (x, r)\mapsto (x, re^{-\phi(x)}),$$
So as contact one-forms on $M$, we have
$$\tilde\eta=\eta+d^c_B\phi.$$
Therefore, the transverse K\"ahler forms are related by
\[
\tilde \o^T=\o^T+\i \p_B\bp_B \phi.
\] 
In the regular case, this corresponds to deform the K\"ahler metric on the quotient K\"ahler manifold within a fixed K\"ahler class.  The transverse Ricci form $\rho^T$ defines a basic cohomology class $\frac{1}{2\pi}[\rho^T]_B$, which we call \emph{the basic first Chern class} $c_1^B$. 

Suppose now $(M, \xi, \eta, g)$ has positive transverse bisectional curvature, then it follows directly $c_1^B>0$. 
Moreover, a rather standard Bochner technique implies $b_2^B=\text{dim} H^2_B(M, \R)=1$ (see \cite{He} Section 9). It then follows that there is a positive constant $\l$ such that
\[
c_1^B=\frac{1}{2\pi}[\rho^T]_B=\l [\o^T]_B. 
\]
By a homothetic transformation, we can then assume that, as a basic cohomology class, 
\begin{equation}\label{normalization}
[\rho^T]_B=(2n+2)[\o^T]_B. 
\end{equation}
Hence there is a basic function $h$ on $M$ such that
\begin{equation}\label{e-2-11}
\rho^T+\i \p_B\bp_B h=(2n+2)\o^T,
\end{equation}
where $h$ is called  the \emph{(transverse) Ricci potential} of $\omega^T$, with the normalization
\[\int_M e^{-h}dv_g=1.\]
Note that $h$ can be considered as a basic function on $M$ or a function on  $X$ which is invariant under both $\xi$ and $r\p_r$. 
On the cone $X$,  the Ricci form $\rho_X$ satisfies
\[
\rho_X+\i \p\bp h=\rho^T-(2n+2)\o^T+\i \p_B\bp_B h=0.
\]
If the Sasaki metric is Einstein, one easily sees that \eqref{normalization} holds and $h=0$.\\

Next we consider more general deformations by allowing the Reeb vector field to vary in a fixed abelian Lie algebra. First we recall \emph{Type-I deformation} defined in \cite{BGM}. Let $(M, \xi_0, \eta_0, g_0)$ be a compact Sasaki manifold, denote its automorphism group by  $\text{Aut}(M, \xi_0, \eta_0, g_0)$.  We remark that here $\text{Aut}(M, \xi_0, \eta_0, g_0)$ denotes the group of all {\em $C^1$} diffeomorphisms of $M$ that preserves $\xi_0, \eta_0, g_0$. Elements in $\text{Aut}(\xi_0, \eta_0, g_0)$ are automatically $C^{k+1}$ if $g_0$ is in $C^k$ ($k\geq 1$) \cite{CH}; in particular, they are automatically smooth if $g_0$ is smooth. Then  $\text{Aut}(M, \xi_0, \eta_0, g_0)$ acts on $(X, J)$ naturally and it is a subgroup of $\Aut(X, J)$. 
We fix a maximal torus $\T\subset \Aut(M, \xi_0, \eta_0, g_0)$. 

\begin{defi}[Type-I deformation]
Let $(M, \xi_0, \eta_0, g_0)$ be a $\T$-invariant Sasaki structure and let $\T\subset \Aut(M, \xi_0,  \eta_0, g_0)$ be a compact maximal torus. For any $\xi\in \ft$ such that $\eta_0(\xi)>0$. We define a 
 new Sasaki structure on $M$ explicitly as 
\begin{equation}\label{e-2-7}
\eta=\frac{\eta_0}{\eta_0(\xi)}, \Phi=\Phi_0-\Phi_0\xi\otimes \eta, g=\eta\otimes \eta+\frac{1}{2}d\eta(\I\otimes \Phi).
\end{equation}
\end{defi}

It is clear from \eqref{e-2-7} that the family of Sasaki structures depend smoothly ($C^k$) on the Reeb vector field if the Sasaki structure $(\xi_0, \eta_0, g_0)$ is smooth ($C^k$).
It was proved directly in \cite{BGM} that the deformation given in \eqref{e-2-7} preserves the CR structure $(\cD=\Ker(\eta_0), \Phi_0|_{\cD})$.

It turns out that there is an equivalent description of Type-I deformation on the cone $(X, J)$.  In particular, the underlying complex cones corresponding to Type-I deformation are the same. As in \cite{MSY}, we fix a compact torus $\T\subset \Aut(X, J)$ and vary the Reeb vector fields within the Lie algebra $\ft$ of $\T$.  We fix a $\T$ invariant Sasaki metric $(M, \xi_0, \eta_0, g_0)$ with $\xi_0\in \ft$. We have the following,
 \begin{lem}\label{l-2-6}
 For any $\xi\in \ft$ such that $\eta_0(\xi)>0$, then there exists a $\T$ invariant K\"ahler cone metric $g_X$ on $(X, J)$ with Reeb vector field $\xi$. 
  \end{lem}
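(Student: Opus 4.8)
The plan is to produce the desired metric directly from the reference structure $(M,\xi_0,\eta_0,g_0)$ by rescaling the contact form, and then to verify that the resulting Sasaki structure has K\"ahler cone exactly $(X,J)$. Throughout I identify $M$ with the reference link $\{r_0=1\}$, and I write $u:=\eta_0(\xi)$, a $\T$-invariant function on $M$ that is strictly positive by hypothesis, hence bounded below by a positive constant by compactness. The key elementary computation is that, since $g_0$ is $\T$-invariant and $\xi\in\ft$, one has $\cL_\xi \eta_0=0$ and $-J\xi(r_0)=r_0\,u$ (using $g_X^{(0)}(\xi_0,\xi)=r_0^2 u$ and that $J$ is a $g_X^{(0)}$-isometry); in particular $-J\xi$ is a real holomorphic vector field, its flow preserves $J$, and it strictly increases $r_0$ wherever $u>0$.

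First I define the new contact form on $M$ by $\eta:=u^{-1}\eta_0$. A direct check gives $\eta(\xi)=1$ and $\iota_\xi d\eta=0$, so $\xi$ is the Reeb field of $\eta$. Since $u>0$, $\eta$ and $\eta_0$ share the same contact distribution $\cD=\Ker\eta=\Ker\eta_0$, on which I keep the same transverse complex structure $\Phi:=J|_{\cD}$. Because $d\eta|_{\cD}=u^{-1}\,d\eta_0|_{\cD}$ (the $du\wedge\eta_0$ term drops out on $\cD$), the compatibility relations $d\eta(\Phi\cdot,\Phi\cdot)=d\eta(\cdot,\cdot)$ and $d\eta(\cdot,\Phi\cdot)>0$ are inherited from the reference structure; here positivity of the new transverse K\"ahler form $\o^T=\tfrac12 d\eta$ is \emph{automatic} from $u>0$. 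The CR integrability of $\Phi$ is unchanged, and $\cL_\xi\Phi=0$ holds because $\xi\in\ft$ preserves both $J$ and $\cD=\Ker\eta$. Hence $(\xi,\eta,\Phi)$ is a genuine Sasaki structure on $M$, with metric $g:=\eta\otimes\eta+\tfrac12 d\eta(\cdot,\Phi\cdot)$, manifestly $\T$-invariant.

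It remains to identify the K\"ahler cone of this new Sasaki structure with the given $(X,J)$; this is the step that genuinely uses $u=\eta_0(\xi)>0$ and is the main point. Since $-J\xi(r_0)=r_0 u$ with $u$ bounded between positive constants, the flow of $-J\xi$ is complete, free, and proper, its orbits meet each level set $\{r_0=c\}$ exactly once, and the quotient is $M$; exponentiating the flow time from $M$ defines a positive $\T$-invariant radial function $r$ on $X$ with $\cL_{-J\xi}\log r=1$, i.e. $r\p_r=-J\xi$ and thus $\xi=J(r\p_r)$. With this radial coordinate the canonical almost complex structure $J_g$ of the new Sasaki structure satisfies $J_g(r\p_r)=\xi$ and $J_g|_{\cD}=\Phi=J|_{\cD}$, so $J_g=J$ on all of $TX$; consequently $g_X=dr^2+r^2 g$ is a $\T$-invariant K\"ahler cone metric on $(X,J)$ with K\"ahler form $\tfrac12\i\,\p\bp r^2$ and Reeb field $\xi$. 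I expect the main obstacle to be precisely this last identification: one must know that $-J\xi$ integrates to a proper dilation exhausting $X$, so that the abstractly reconstructed cone is literally $(X,J)$ rather than some other complex cone. The positivity of the metric, by contrast, comes for free from $u>0$.
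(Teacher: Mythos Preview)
Your argument is correct and arrives at exactly the same objects as the paper: both solve $r\p_r=-J\xi$ with $r=1$ on the reference link to produce the new radial function, and both obtain $\eta=\eta_0/\eta_0(\xi)$ with the unchanged CR structure on $\cD=\Ker\eta_0$. The only difference is the order of exposition. The paper (following \cite{MSY}) works cone-first---it sets $\omega=\tfrac{\i}{2}\p\bp r^2$ directly on $(X,J)$ and then checks positivity via $d\eta(Y,JZ)=\eta_0(\xi)^{-1}d\eta_0(Y,JZ)$ on the link---whereas you work link-first, building the Sasaki structure intrinsically on $M$ and then identifying its abstract cone with $(X,J)$ through the flow of $-J\xi$; this is precisely the Type~I deformation of \cite{BGM}, as the paper's own Remark following the lemma points out. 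Your route trades the paper's positivity verification for your check that $J_g=J$, but the substance is identical.
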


\begin{proof} 
Let $r_0^2$ be the K\"ahler potential on $(X, J)$ corresponding to the Sasaki structure $(M, \xi_0, \eta_0, g_0)$. To determine the required K\"ahler cone metric, we need to describe its radial function $r$.
We notice $\langle -J\xi, r_0^{-1}\p_{r_0}\rangle=\eta_0(\xi)$, which is positive and $r_0$ invariant, hence is uniformly positive and bounded. So for any $x\in X$, the integral curve $\phi_t(x)$ of the vector field $-J\xi$ on $X$ is always transverse to the link $r_0=constant$ at an angle strictly between $0$ and $\pi$. In particular $\lim_{t\rightarrow-\infty}r_0(\phi_t(p))=0$ and $\lim_{t\rightarrow\infty}r_0(\phi_t(p))=+\infty$.  
Now we identify $M$ with $\{r_0=1\}$ and thus $M\times\R^+$ with $X$ using the cone metric corresponding to $(\xi_0, \eta_0, g_0)$ (as in the beginning of this section).  Then we obtain a diffeomorphism $F: M\times\R^+\rightarrow M\times\R^+$ which sends $(p, r_0)$ to the point $\phi_{\log r_0}(p)=(q(p, r_0), r(p, r_0))$.   It is clear from the definition and the above identification of $X$ with $M\times \R^+$ that we may view $(q, r)$ as new coordinates on $X$, and we have $r\p_r=-J\xi$.   
We note that $r$ is $\T$ invariant and
 $\lim_{r_0\rightarrow 0(+\infty)} r=0(+\infty)$.  Define
\[
\o=\i \p\bp \left(\frac{r^2}{2}\right)\; \mbox{and}\;
g_X= \o(\cdot, J\cdot).
\]
Set $\eta=J(r^{-1}dr)$. We have
\begin{equation}\label{E2.6-2}
\omega=\frac{1}{2}d(r^2 \eta); g_X=dr^2+r^2 (\eta\otimes \eta+\frac{1}{2}d\eta(\I\otimes J))
\end{equation}

Note that the link $\{r=1\}$ coincides with $\{r_0=1\}$. On this link,  $V\in \Ker(\eta)$  if and only if $JV$ is tangent to $\{r=1\}$, which is the same as $JV$ is tangent to $\{r_0=1\}.$ So this is equivalent to  $V\in \Ker(\eta_0)$. It then follows that on $\{r_0=1\}$, there exists  a function $f$ so that $\eta=f \eta_0. $
Indeed,  $f=1/\eta_0(\xi)>0.$ We claim that $g_X$ is  positive definite and defines a K\"ahler cone metric on $X$. First it is readily seen that $\frac{\p}{\p r}$ and $r^{-1}\xi$ are orthogonal and of unit norm. Moreover, for any $Y\in \Ker (\eta)$, $g_X(r\p_r, Y)=g_X(\xi, Y)=0$. So it suffices to check $g_X$ is positive definite on $\Ker(\eta)$. Note that by \eqref{E2.6-2}, $g_X$ (and $\omega$) has homothetic degree two with respect to $r\p_r$. Thus we only need to prove $g_X$ is positive definite on $\Ker(\eta)$ when restricted on $\{r=1\}$. For any two vectors $Y$, $Z\in\Ker(\eta)$ on $\{r=1\}$, we have
\[
d\eta(Y, JZ)=\frac{1}{\eta_0(\xi)} d\eta_0(Y, JZ). 
\]
Note that we also have $\cL_\xi r=dr(\xi)= dr (J(r\p_r))=0$. It follows that $(M\times \R^{+}, J, g_X)$ defines a K\"ahler cone metric with radial function $r$ and Reeb sector field $\xi$. 

The construction above is indeed equivalent to Type-I deformation given in \eqref{e-2-7}. 
We restrict $\eta$ as a 1-form on the link $\{r_0=1\}$ and we have $\eta=\eta_0/\eta_0(\xi)$.
We can then directly compute that $d\eta(\xi, \cdot)=0$ using the fact that $\eta_0$ is $\xi$-invariant. Hence $\eta$ defines a contact 1-form on $\{r_0=1\}$ with the Reeb vector field $\xi$. 
Let $\Phi Y=J Y, Y\in\Ker(\eta)$ and $\Phi \xi=0$. Then $(\eta, \xi, \Phi)$ is a compatible triple on $\{r_0=1\}$ and it defines a Sasaki metric by 
\[
g=\eta\otimes \eta+\frac{1}{2}d\eta(\I\otimes \Phi).
\]
Now we construct a cone metric $g_X=dr^2+r^2 g$ on $(X, J)$ using the function $r$, and $g_X$ is the Kahler (cone) metric which corresponds to the Sasaki structure $(\eta, \xi, \Phi)$. 
 \end{proof}

It turns out $\eta_0(\xi)>0$ is also necessary for $\xi\in \ft$ being a Reeb vector field of a contact 1-form that comes out of deformation. 

 \begin{lem}\label{l-2-9}
Let $\eta(t) (t\in[0,1])$ be a continuous path of contact 1-forms with Reeb vector field $\xi(t)\in\ft$ such that $\eta(0)=\eta_0$ is the fixed $\T$-invariant contact 1-form, then $\eta_0(\xi(t))>0$ for all $t\in[0,1]$.
  \end{lem}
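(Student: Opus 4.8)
The plan is to run a continuity argument on the function $f_t := \eta_0(\xi(t))$, regarded as a smooth function on the compact manifold $M$, and to show it can never reach zero. First I would record the initial positivity: the path emanates from the fixed reference structure, so $\xi(0)=\xi_0$ is the Reeb field of $\eta_0$ and $f_0\equiv\eta_0(\xi_0)=1>0$. Setting $m(t):=\min_{x\in M}f_t(x)$, the joint continuity of $(t,x)\mapsto\eta_0(\xi(t))(x)$ together with the compactness of $M$ makes $m$ a continuous function of $t$ with $m(0)=1$.

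The key identity comes from the $\T$-invariance of $\eta_0$. Since $\xi(t)\in\ft$ and $\eta_0$ is $\T$-invariant, we have $\cL_{\xi(t)}\eta_0=0$, so Cartan's formula gives $\iota_{\xi(t)}d\eta_0=-df_t$. Thus $f_t$ plays the role of a Hamiltonian for $\xi(t)$ relative to the exact two-form $d\eta_0$, and at any critical point of $f_t$ the vector field $\xi(t)$ lies in the kernel of $d\eta_0$. Because $\eta_0$ is a contact form, $\eta_0\wedge(d\eta_0)^n\neq0$, so $d\eta_0$ has one-dimensional kernel at each point; since $\iota_{\xi_0}d\eta_0=0$ this kernel is exactly $\R\,\xi_0$.

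Now suppose for contradiction that $m(t)\le0$ somewhere. By the intermediate value theorem there is a first time $t_0\in(0,1]$ with $m(t_0)=0$, attained at some $x_0\in M$. As a global minimum of a smooth function, $x_0$ is a critical point of $f_{t_0}$, so $df_{t_0}(x_0)=0$ and hence $\iota_{\xi(t_0)}d\eta_0=0$ at $x_0$; by the previous paragraph $\xi(t_0)(x_0)=c\,\xi_0(x_0)$ for some scalar $c$. Evaluating $\eta_0$ then gives $0=f_{t_0}(x_0)=c\,\eta_0(\xi_0)(x_0)=c$, so $\xi(t_0)(x_0)=0$. This contradicts $\eta(t_0)(\xi(t_0))\equiv1$, which forces $\xi(t_0)$ to be nowhere vanishing. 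Therefore $m(t)>0$ for all $t\in[0,1]$, i.e. $\eta_0(\xi(t))>0$.

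The main obstacle is conceptual rather than computational: one must notice that $\T$-invariance converts the sign question into the Hamiltonian identity $\iota_{\xi(t)}d\eta_0=-df_t$, and then exploit contact non-degeneracy to pin the kernel of $d\eta_0$ down to the Reeb line $\R\,\xi_0$. Once these two facts are in hand, the non-vanishing of any Reeb field closes the argument. A minor point to verify is the initial normalization $m(0)>0$: if the path is not assumed to start exactly at $\eta_0$, one needs only the input $\eta_0(\xi(0))>0$, which holds in the intended application where the deformation emanates from the fixed structure $(M,\xi_0,\eta_0,g_0)$.
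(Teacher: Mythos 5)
Your proof is correct. It shares the paper's skeleton exactly up to the key moment: both arguments reduce the claim to closedness along the path (your ``first time $t_0$'' is the paper's open--closed dichotomy), both use $\T$-invariance via $\cL_{\xi(t)}\eta_0=0$ to get $\iota_{\xi(t)}d\eta_0=-d\bigl(\eta_0(\xi(t))\bigr)$, and both exploit that at the minimum point the function $\eta_0(\xi(t_0))$ and its differential vanish simultaneously. Where you diverge is in how the contradiction is extracted. The paper wedges everything into top degree: writing $\eta(t_0)\wedge(d\eta(t_0))^n=f\,\eta_0\wedge(d\eta_0)^n$ and contracting with $\xi(t_0)$, it shows $(d\eta(t_0))^n$ vanishes at the minimum point, contradicting non-degeneracy of the contact form $\eta(t_0)$. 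You instead argue pointwise: the vanishing of $\iota_{\xi(t_0)}d\eta_0$ at $x_0$ forces $\xi(t_0)(x_0)$ into the one-dimensional kernel $\R\,\xi_0(x_0)$ of $d\eta_0$, and the vanishing of $\eta_0(\xi(t_0))(x_0)$ then kills the coefficient, so $\xi(t_0)(x_0)=0$, contradicting $\eta(t_0)(\xi(t_0))\equiv 1$. Your version is slightly more economical --- it needs only linear algebra at a single tangent space rather than the identity between the two volume forms --- while the paper's computation has the minor virtue of never invoking the kernel of $d\eta_0$ explicitly. Your closing remark about the initial normalization is also apt: the argument only needs $\eta_0(\xi(0))>0$, which holds trivially here since $\eta(0)=\eta_0$ gives $f_0\equiv 1$.
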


\begin{proof}The property that $\eta_0(\xi(t))>0$ is clearly an open property in $t$. So it suffices to prove this is also a closed property. Thus we can assume $\eta_0(\xi(t))>0$ for all $t\in[0,1)$, and we need to prove $\eta_0(\xi(1))>0$. For simplicity we denote $\xi=\xi(1)$. Suppose this not true, then by continuity, $\eta_0(\xi)\geq 0$ and there exists a point $p\in M$ such that $\eta_0(\xi)(p)=0$. Note that $\eta_0\wedge (d\eta_0)^n$ and $\eta\wedge (d\eta)^n$ are two volume forms on $M$. Then there exists a nowhere vanishing function $f$ such that
\[
f \eta_0\wedge (d\eta_0)^n= \eta\wedge (d\eta)^n. 
\] 
Clearly $f>0$. It follows that
\begin{equation}\label{e-2-8}
(d\eta)^n=\iota_\xi \eta\wedge (d\eta)^n=\iota_\xi (f \eta_0\wedge (d\eta_0)^n). 
\end{equation}
We compute
\[
\begin{split}
\iota_\xi (f \eta_0\wedge (d\eta_0)^n)=&f \eta_0(\xi) (d\eta_0)^n-f\eta_0 \wedge \iota_{\xi} (d\eta_0)^n\\
=&f \eta_0(\xi) (d\eta_0)^n-nf\eta_0\wedge \iota_\xi d\eta_0 \wedge (d\eta_0)^{n-1}.
\end{split}
\]
Note that $\eta_0$ is $\T$-invariant; in particular
\[
\cL_{\xi}\eta_0=\iota_{\xi}d\eta_0+d(\eta_0(\xi))=0.
\]
So
\begin{equation}\label{e-2-9}
\iota_\xi (f \eta_0\wedge (d\eta_0)^n)=f \eta_0(\xi) (d\eta_0)^n+nf\eta_0\wedge d(\eta_0(\xi)) \wedge (d\eta_0)^{n-1}.
\end{equation}
It then follows from \eqref{e-2-8} and \eqref{e-2-9} that
\begin{equation}\label{e-2-10}
(d\eta)^n=f \eta_0(\xi) (d\eta_0)^n+nf\eta_0\wedge d(\eta_0(\xi)) \wedge (d\eta_0)^{n-1}.
\end{equation}
Note that at $p$, $\eta_0(\xi)=0$ by assumption; also $d(\eta_0(\xi))(p)=0$ since $p$ is a minimum of $\eta_0(\xi)$. By \eqref{e-2-10}, it implies that $(d\eta)^n(p)=0$. Contradiction. 
\end{proof}

\begin{defi}
We define the associated  \emph{Reeb cone} $\cR_{\xi_0}$ of $\xi_0$ 
to be 
\[
\cR_{\xi_0}=\{\xi\in \ft: \eta_0(\xi)>0\}.
\]
\end{defi}

The notion of a \emph{Reeb cone} originates from \cite{MSY} Section 2.5, where it is defined to be the cone in $\ft$ dual to the moment cone $\mathcal C ^*$ in $\ft^*$. This is the path-connected component of $\xi_0$ in the set of all possible elements in $\ft$ that are Reeb vector field of some K\"ahler cone metric on  $(X, J)$.
By Lemma \ref{l-2-6} and Lemma \ref{l-2-9} the notion of a Reeb cone stated in Definition \ref{simple deformation} agrees with that given in \cite{MSY}. The notion of Reeb cone also coincides with the \emph{Sasaki cone} for a fixed CR structure introduced in \cite{BGS1}.  Note that a Reeb cone is always convex, and it is proved in \cite{MSY} that the element in $\cR_{\xi_0}$ that is the Reeb field of a Sasaki-Einstein metric is unique. 

\begin{defi}[Simple deformation]\label{simple deformation}
 We  define a \emph{simple deformation} of $(M, \xi_0, \eta_0, g_0)$ on $M$ with respect to $\T$ to be a Sasaki structure that is induced from a K\"ahler cone metric on $(X, J)$ with Reeb vector fields in $\cR_{\xi_0}$. Here we emphasize that when we talk about simple deformations, we fix the underlying complex structure on the K\"ahler cone, and we also need to specify a torus $\T$.
\end{defi}

By the above discussion we know a simple deformation is  the composition of a Type I deformation followed by a transverse K\"ahler deformation. For our purpose we shall also talk about a $C^{k, \alpha}$ simple deformation for $k\in \N$ and $\alpha\in (0, 1)$.
 \begin{defi}[$C^{k, a}$ simple deformation]\label{E4.2-d}
A $C^{k, \alpha}$ simple deformation is  a smooth type-I deformation (with respect to a fixed torus $\T$) followed by a transverse K\"ahler deformation given by a $C^{k+2, \alpha}$ deformation of the transverse K\"ahler potential. Notice that, by this definition, the Reeb vector fields involved all belong to the Lie algebra $\ft$, so are always smooth.  \end{defi}

\begin{rmk} It would be interesting to understand whether it is possible to have two different Reeb cones for fixed $(X, J)$ and $\ft. $
\end{rmk}

\subsection{Weighted Sasaki spheres}\label{S-2-5}

Consider the standard Sasaki structure on the sphere $S^{2n+1}$. The K\"ahler cone $X$ is $\C^{n+1}\setminus \{0\}$ with the flat metric $\omega=\frac{\sqrt{-1}}{2}\sum_{i=0}^n dz^i\wedge d\bar{z}^{i}$. The contact form on the link $M$ is given by  $\eta=\frac{1}{r^2}\sum_{i=0}^n (y^idx^i-x^idy^i)$, and the Reeb vector field is $\xi=\sum_{i=0}^n(y^i\frac{\p}{\p x^i}-x^i\frac{\p}{\p y^i})$.  Here $z^i=x^i+\sqrt{-1}y^i$. The  automorphism group of this Sasaki metric is $U(n+1)$. We take a maximal torus $\T^{n+1}$ in $U(n+1)$ consisting of diagonal elements. The Lie algebra $\ft$ is generated by the elements $\xi_i=y^i\frac{\p}{\p x^i}-x^i\frac{\p}{\p y^i}$. For any $\xi'=\sum_{i=0}^n a_i\xi_i$, we have 
$\eta(\xi')=\sum_{i=0}^na_i|z^i|^2$. So for  $\xi'$ to be positive,  it is equivalent that $a_i>0$ for all $i$. Thus the Reeb cone in this case  is  $\R_{+}^{n+1}$.

We call a Sasaki structure on $S^{2n+1}$ \emph{simple} if it is isomorphic to a Sasaki structure on $S^{2n+1}$ that comes out of a simple deformation from the standard Sasaki structure $(\xi, \eta, g)$. The K\"ahler cone of a simple Sasaki structure is bi-holomorphic to $\C^{n+1}\setminus\{0\}$ and the corresponding Sasaki manifold is called a \emph{weighted Sasaki sphere}.   All simple Sasaki structures form a connected family of Sasaki structures on $S^{2n+1}$. 
For $a=(a_0, \cdots, a_n)\in \R^{n+1}_{+}$, we denote by $\xi_a=\sum_i a_i\xi_i$. 
It is not hard to see that a simple Sasaki structure on $S^{2n+1}$ with Reeb vector field $\xi_a$ is quasi-regular if and only if $a \in \Q_{+}^n$, in which case we get a circle bundle over a weighted projective space. It is regular precisely when all the $a_i$'s are equal, in which case the Sasaki structure is isomorphic to the standard one on $S^{2n+1}$ up to a homothetic transformation.

\subsection{Sasaki-Ricci solitons} \label{S-2-3}
In this subsection we recall some general theory on the symmetries in Sasaki geometry, largely following \cite{MSY} and \cite{FOW}. We also state some facts about Sasaki-Ricci solitons, whose proofs follow analogously  the K\"ahler case as in \cite{Tian-Zhu2, Tian-Zhu}. Let $(M, \xi, \eta, g)$ be a Sasaki manifold, and $(X, g_X, J)$ the corresponding K\"ahler cone.  Denote $G=\text{Aut}(\xi, \eta, g)$,  and denote by $\g$ the Lie algebra of $G$. 

\begin{defi} 
We say a vector field $Y$ on a Sasaki manifold $(M, \xi, \eta, g)$ is a \emph{Hamiltonian holomorphic vector field} if its homogeneous extension to $X$ (which we also denote by $Y$) is a Hamiltonian holomorphic vector field with respect to the K\"ahler metric $(J, g_X)$.  
\end{defi}

This definition is essentially the same as the one given in \cite{FOW}, where it is phrased in terms of transverse K\"ahler geometry. The difference is that in \cite{FOW} the vector fields are allowed to be complex valued, while in this definition we only consider real valued vector fields. Hence a Hamiltonian holomorphic vector field is clearly a Killing field. For a Hamiltonian holomorphic vector field $Y$, we have $\cL_Y r=0$ and 
$\cL_{r\p_r} Y=0, $
Then $$\iota_Y \omega=\frac{1}{2}\iota_Y d(r^2\eta)=\frac{1}{2}\cL_Y (r^2\eta)-\frac{1}{2} d(r^2\eta(Y))=-\frac{1}{2}d(r^2\eta(Y)).$$
So $Y$ is generated by the Hamiltonian function $H_Y=-\frac{1}{2} r^2\eta(Y)$, i.e. 
$$Y=\frac{1}{2}J\nabla_X (r^2\eta(Y))=\eta(Y)\xi+\frac{r^2}{2}J\nabla_X\eta(Y). $$
We call the function $H_Y$ a \emph{Hamiltonian holomorphic potential} on $M$. 
It is easy to see that  the Lie algebra $\g$ can be identified with the Lie algebra of Hamiltonian holomorphic vector fields $Y$ on $M$, or equivalently, the Lie algebra of Hamiltonian holomorphic potentials on $M$ under the Poisson bracket.

Now we fix the homothetic re-scaling by equation (\ref{normalization}). Note this normalization is what one should use if one is searching for Sasaki-Einstein metrics. Let $h$ be the Ricci potential of $\o^T$, as defined in (\ref{e-2-11}). There is an alternative characterization of a Hamiltonian holomorphic vector field in terms of a self-adjoint operator on $M$ with respect to the measure $e^{-h}dv$. The operator is given by 
$$L(\psi)=\t \psi-\nabla h\cdot \nabla \psi+4(n+1)\psi, $$ for a basic function $\psi$, where $\t$ is the rough Laplacian of $g$. Note that for a basic function $\psi$, $\t \psi=-\t_B \psi$.   The corresponding operator on the cone $X$ is given by 
$$L_X(\psi)=r^2(\t_X\psi-\nabla_X h\cdot \nabla_X\psi)+4(n+1)\psi.$$

\begin{defi} We call a basic function $\psi$ on $M$ \emph{normalized} if 
\begin{equation}\label{normalized potential}
\int_M \psi e^{-h}dv_g=0
\end{equation}
\end{defi}
For an arbitrary basic function $\psi$, we denote by 
\begin{equation}\label{T-4-n}
\underline{\psi}=\psi-\int_M \psi e^{-h}dv_g
\end{equation} 
the normalization of $\psi$.
A straightforward Bochner technique gives 
 
\begin{lem}[\cite{FOW}]\label{l-2-4-2}
A real-valued basic function $\psi$
 satisfies $L(\underline{\psi})=0$ if and only if $\psi$ is a Hamiltonian holomorphic potential. 
\end{lem}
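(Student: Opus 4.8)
The plan is to prove this exactly as in the Kähler–Fano case of Tian–Zhu, by a transverse Bochner–Weitzenböck argument, the only modifications being that every quantity is taken basic and every integration by parts is performed against the weighted measure $e^{-h}dv_g$. The first step is to reinterpret both conditions in purely transverse-complex terms. Using the Hamiltonian construction $Y=\eta(Y)\xi+\frac{r^2}{2}J\nabla_X\eta(Y)$ together with the holomorphy of $\xi$ and $r\partial_r$ and the compatibility $\cL_\xi\Phi=0$, a direct check shows that the homogeneous extension of $Y$ is holomorphic on $(X,J)$ if and only if its transverse $(1,0)$-gradient $g^{i\bar j}(\nabla^T_{\bar j}\psi)\,\partial/\partial z^i$ is a transverse holomorphic vector field, where $\psi=\eta(Y)$; equivalently, the $(0,2)$-part of the transverse Hessian vanishes, i.e. $\nabla^T_{\bar i}\nabla^T_{\bar j}\psi=0$ for all $i,j$. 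Thus the statement reduces to the assertion that, for a normalized basic $\psi$, one has $L(\psi)=0$ if and only if $\nabla^T_{\bar i}\nabla^T_{\bar j}\psi\equiv 0$.

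For the implication $(\Leftarrow)$ I would argue directly. Assuming $\nabla^T_{\bar i}\nabla^T_{\bar j}\psi=0$, I take the transverse trace and commute the transverse covariant derivatives; the transverse Ricci identity introduces a factor of $R^T_{i\bar j}$, which I eliminate using the Ricci-potential equation \eqref{e-2-11} in the form $R^T_{i\bar j}=(2n+2)g^T_{i\bar j}-\nabla^T_i\nabla^T_{\bar j}h$. This yields $L(\psi)=\mathrm{const}$. Since $L$ is self-adjoint with respect to $e^{-h}dv_g$ and $L(1)=4(n+1)$, integration gives $\int_M L(\psi)\,e^{-h}dv_g=4(n+1)\int_M\psi\,e^{-h}dv_g=0$ by the normalization \eqref{normalized potential}, so the constant vanishes and $L(\psi)=0$.

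The essential implication is $(\Rightarrow)$, and here lies the main work. I would establish the $L^2$ Bochner identity
$$\int_M \big\|\nabla^T_{\bar i}\nabla^T_{\bar j}\psi\big\|^2\,e^{-h}\,dv_g = c\int_M |L(\psi)|^2\,e^{-h}\,dv_g$$
for an explicit positive constant $c$, obtained by expanding the left-hand side, integrating by parts twice against $e^{-h}dv_g$, commuting the transverse covariant derivatives at the cost of a single transverse curvature term, and removing that term through \eqref{e-2-11}. It is precisely this substitution that produces the zeroth-order coefficient $4(n+1)$ of $L$: it equals $a(a+2n)$ with $a=2$, the eigenvalue generated when the homothetic-degree-two Hamiltonian $r^2\psi$ is fed through the cone Laplacian, which is why the normalization \eqref{e-2-5} is the correct choice here. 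Granting this identity, $L(\psi)=0$ forces $\nabla^T_{\bar i}\nabla^T_{\bar j}\psi\equiv 0$, and by the first paragraph $\psi$ is a Hamiltonian holomorphic potential.

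The step I expect to be delicate is the bookkeeping in the Bochner identity. One must justify the integrations by parts on the odd-dimensional $M$ by reducing them to the transverse divergence theorem, which is legitimate exactly because $\psi$ and $h$ are basic and $e^{-h}dv_g$ is $\xi$-invariant; one must track the single commutator term so that, after inserting \eqref{e-2-11}, the constant assembles precisely into $4(n+1)$ with no residual curvature; and one must confirm that the transverse holomorphic gradient field genuinely extends to a holomorphic, not merely transversely holomorphic, vector field on the cone, which is where $\cL_\xi\Phi=0$ and the holomorphy of $\xi$ and $r\partial_r$ enter. None of these is deep, but keeping all signs and constants consistent with the conventions $\Delta=-\Delta_B$ and $\omega^T=\tfrac12 d\eta$ is where the care is required.
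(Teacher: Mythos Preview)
Your approach is exactly what the paper intends: it simply states ``a straightforward Bochner technique gives'' and cites \cite{FOW}, and your outline---reduce to the transverse condition $\nabla^T_{\bar i}\nabla^T_{\bar j}\psi=0$, then run the weighted Bochner formula against $e^{-h}dv_g$---is that technique. One small correction: the displayed identity $\int_M\|\nabla^T_{\bar i}\nabla^T_{\bar j}\psi\|^2 e^{-h}dv_g=c\int_M|L(\psi)|^2 e^{-h}dv_g$ cannot hold as written (test it on $\psi\equiv1$, or on any eigenfunction of $-L_0:=-\Delta+\nabla h\cdot\nabla$ with eigenvalue strictly larger than $4(n+1)$); the actual Bochner computation yields
\[
\int_M\big\|\nabla^T_{\bar i}\nabla^T_{\bar j}\psi\big\|^2 e^{-h}\,dv_g \;=\; \tfrac14\int_M L_0(\psi)\,L(\psi)\,e^{-h}\,dv_g,
\]
i.e.\ one factor of $L(\psi)$ and one of $L_0(\psi)=L(\psi)-4(n+1)\psi$. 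This is harmless for your argument, since $L(\psi)=0$ still kills the right-hand side, and you rightly flagged the bookkeeping here as the delicate step.
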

 Later we will also consider \emph {complex Hamiltonian holomorphic potentials}, which are complex-valued basic functions  $\psi$ which satisfies that $L(\underline{\psi})=0$. Similar as before, the space of all complex Hamiltonian holomorphic potentials can be identified with the Lie algebra of the group $P$ of holomorphic transformations of $(X, J)$ that commute with the dilation generated by $r\p_r$. In literature, $P$ is often called \emph{the transverse holomorphic automorphism group}.  Clearly $P$ does not change under the transverse K\"ahler deformation of the Sasaki structure. 
 
For any $Y\in \g$, we define the \emph{Futaki invariant} of $JY$ as 

$$Fut(JY)=\int_{X} \cL_{JY}(-h)e^{-\frac{r^2}{2}}dV_X=\frac{1}{2}\int_X \nabla_X h\cdot \nabla_X \eta(Y) r^2 e^{-\frac{r^2}{2}}dV_X.$$
The appearance of the exponential term guarantees the validity of integration by parts, and one can show that this does not change under a transversal K\"ahler deformation of the metric. 

Now we introduce the notion of a \emph{Sasaki-Ricci soliton}, following \cite{FOW}. 
\begin{defi} A compact Sasaki manifold $(M, \xi, \eta, g)$ is called a Sasaki-Ricci soliton if there is a Hamiltonian holomorphic vector field $Y$ such that 
$$Ric^T-(2n+2)g^T=\cL_{JY}g^T. $$
\end{defi}
This is equivalent to that  the Ricci potential $h$ is a Hamiltonian holomorphic potential, in other words, $L(\underline{h})=0$.  
 Suppose $\T$ is a maximal torus of $G$ and $(M, \xi, \eta, g)$ is a Sasaki-Ricci soliton, then it is Sasaki-Einstein if $Fut(JY)=0$ for all $Y\in \ft$. Indeed,  the Hamiltonian vector field $Y_h$ generated by the Ricci potential $h$ commutes with all elements in $\ft$, and by maximality of $\T$,  it must be in $\ft$. So $Fut(JY_h)=\frac{1}{2}\int_X |\nabla_X h|^2e^{-\frac{r^2}{2}} dV_X=0$, and thus $h$ is constant. 

\subsection{The normalization} \label{S-2-4}
By definition a Sasaki-Ricci soliton satisfies the normalization condition (\ref{normalization}), and this fixes the homothetic transformation of the Reeb vector field.  Thus when we deform Sasaki-Ricci solitons, one would like to ask under what condition on $\xi$ is (\ref{normalization})  preserved. 

\begin{defi} 
We say a compact Sasaki manifold $(M, \xi, \eta, g)$ is \emph{normalized} if it satisfies (\ref{normalization}). i.e.
$$2\pi c_1^B=(2n+2)[\omega^T].$$
\end{defi}

By \cite{FOW} a Sasaki manifold is homothetic to a normalized one if and only if the basic first Chern class $c_1^B$ is positive definite and the contact subbundle $\cD$ has vanishing first Chern class. We have already seen in Section 2.2 that if $(M, \xi, \eta, g)$ has positive transverse bisectional curvature, then it can be normalized by a homothetic transformation. 

Now we fix a compact  normalized Sasaki manifold $(M, \xi_0, \eta_0, g_0)$,  thus a K\"ahler cone $(X, J, g_X)$, and also a maximal torus $\T$ in $\Aut(\xi_0, \eta_0, g_0)$ whose Lie algebra contains $\xi_0$. By the transverse Calabi-Yau theorem we may assume $Ric^T$ is positive, so by Myers' theorem the fundamental group of $M$ is finite, as a homothetic transformation would produce a metric on $M$ with positive Ricci curvature.  On the K\"ahler cone we know $Ric_X(\omega_0)+\sqrt{-1}\p\bp h_0=0$. So $e^{-h_0}\omega_0^n$ defines a flat connection on $K_X$.  Parallel transport then defines a nowhere vanishing holomorphic section $\Omega_l$ of $K^{\otimes l}$ for some integer $l>0$. Moreover, since $\cL_{\xi_0}h_0=\cL_{r_0\p_{r_0}}h_0=0$, we get 
$$\cL_{r_0\p _{r_0}}\Omega_l=(n+1)l\Omega_l;$$
in other words, using the language in \cite{MSY}, $\Omega_l$ has charge $(n+1)l$ for the vector field $r_0\p_{r_0}$. 
Fixing a choice of $\Omega_l$, then we claim there is a linear functional $c: \ft\rightarrow\R$ such that for any $Y\in\ft$ we have
$$\cL_{Y}\Omega_l=\sqrt{-1}c(Y)\Omega_l.$$
To see this, for any $Y$ there is a holomorphic function $f$ on $X$ such that 
$$\cL_Y\Omega_l=f_Y\Omega_l.$$
Since $Y$ commutes with $r_0\p_{r_0}=-J\xi_0$, we know that $\cL_{r_0\p_{r_0}} f_Y=0$. But the only holomorphic function on $X$ with homothetic degree zero is the constant function, so
$f_Y$ is  constant. It is purely imaginary because the $\T$ action preserves the norm of $\Omega_l$. Now we have

\begin{lem} A $\T$ invariant K\"ahler cone metric on $(X, J)$ with Reeb vector field $\xi\in \ft$ is normalized if and only if $c(\xi)=(n+1)l$. 
\end{lem}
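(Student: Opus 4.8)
The plan is to turn the normalization condition, which is the cohomological identity $2\pi c_1^B=(2n+2)[\o^T]$, into a statement about the homothety weight of the fixed holomorphic section $\Omega_l$ of $K_X^{\otimes l}$ under the \emph{new} homothetic vector field $r\p_r=-J\xi$, and then to read off that weight from $c(\xi)$. Throughout, the $\T$-invariant K\"ahler cone metric $\o=\tfrac12\i\p\bp r^2$ with Reeb field $\xi$ is the one furnished by Lemma \ref{l-2-6}, and $r\p_r=-J\xi$ is its homothetic field.

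First I would compute the Lie derivative of $\Omega_l$ along $r\p_r$. Since $\xi\in\ft$ is a real holomorphic Killing field, writing $\xi=\xi^{1,0}+\xi^{0,1}$ one has $-J\xi=\i(\xi^{0,1}-\xi^{1,0})$. Because $\Omega_l$ is holomorphic, $\cL_{\xi^{0,1}}\Omega_l=0$, so $\cL_{-J\xi}\Omega_l=-\i\,\cL_{\xi}\Omega_l$. Combined with $\cL_{\xi}\Omega_l=\i\,c(\xi)\Omega_l$ this yields the key relation
\[
\cL_{r\p_r}\Omega_l=c(\xi)\,\Omega_l,
\]
i.e. the homothety weight of $\Omega_l$ under the new homothetic field is exactly $c(\xi)$. (As a consistency check, for $\xi=\xi_0$ this returns $c(\xi_0)=(n+1)l$, matching $\cL_{r_0\p_{r_0}}\Omega_l=(n+1)l\,\Omega_l$.)

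Next I would extract the Ricci potential on the cone directly from $\Omega_l$. The metric $\o$ induces a Hermitian metric on $K_X^{\otimes l}$, and the curvature formula for the holomorphic section $\Omega_l$ gives $\rho_X=\i\p\bp\big(\tfrac1l\log\|\Omega_l\|^2\big)$, so that $h:=-\tfrac1l\log\|\Omega_l\|^2$ satisfies $\rho_X+\i\p\bp h=0$. Two homogeneity computations then control $h$. Since $\xi$ is Killing and $c(\xi)$ is real, $\cL_{\xi}\|\Omega_l\|^2=0$, so $h$ is always $\xi$-invariant. Under $r\p_r$ the induced metric on $K_X^{\otimes l}$ has homothety degree $-2(n+1)l$ (as $g_X$ has degree two), and by the key relation $\cL_{r\p_r}\Omega_l=c(\xi)\Omega_l$; hence $\cL_{r\p_r}\|\Omega_l\|^2=(2c(\xi)-2(n+1)l)\|\Omega_l\|^2$ and
\[
\cL_{r\p_r}h=2(n+1)-\frac{2c(\xi)}{l}=:k,
\]
a constant.

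Finally I would convert this weight into the cohomological normalization. Writing $h=k\log r+h_B$ with $h_B$ basic (using $\cL_{r\p_r}\log r=1$, $\cL_{\xi}\log r=0$), and using $\i\p\bp\log r=\o^T$ together with the general cone identity $\rho_X=\rho^T-(2n+2)\o^T$ (as in the discussion preceding \eqref{e-2-6}), the equation $\rho_X+\i\p\bp h=0$ becomes
\[
\rho^T+\i\p_B\bp_B h_B=\big(2(n+1)-k\big)\o^T.
\]
Passing to basic cohomology gives $[\rho^T]_B=(2(n+1)-k)[\o^T]_B$, whereas the normalization \eqref{e-2-5} demands $[\rho^T]_B=2(n+1)[\o^T]_B$. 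Since $\o^T$ is a transverse K\"ahler form, $[\o^T]_B\neq 0$, so the metric is normalized precisely when $k=0$, i.e. when $c(\xi)=(n+1)l$, which is the assertion. The main work is the bookkeeping in the middle two steps: getting the curvature identity and the two homothety weights mutually consistent (the anchor $\cL_{r_0\p_{r_0}}\Omega_l=(n+1)l\,\Omega_l$ fixes all the constants), and invoking the general cone identity $\rho_X=\rho^T-(2n+2)\o^T$; once these are in place the final cohomological comparison is immediate because $[\o^T]_B\neq 0$.
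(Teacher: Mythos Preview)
Your proof is correct. The core computation---that $\cL_{r\p_r}\Omega_l=c(\xi)\Omega_l$ and hence $\cL_{r\p_r}\|\Omega_l\|^2=(2c(\xi)-2(n+1)l)\|\Omega_l\|^2$---is the same as what the paper uses implicitly when it asserts that $h=\log\|\Omega_l\|_\omega^{2/l}$ is $J\xi$-invariant exactly when $c(\xi)=(n+1)l$.

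The difference lies in how the converse is handled. The paper proves only the ``if'' direction by this computation and then dispatches ``only if'' by the separate observation that on each ray $\R_+\cdot\xi$ in the Reeb cone there is at most one Reeb vector field satisfying \eqref{e-2-5}. You instead carry the computation all the way through: writing $h=k\log r+h_B$ and using $\rho_X=\rho^T-(2n+2)\omega^T$ together with $\i\p\bp\log r=\omega^T$ yields the explicit identity $[\rho^T]_B=\frac{2c(\xi)}{l}[\omega^T]_B$, from which both directions follow at once since $[\omega^T]_B\neq 0$. Your route is slightly more self-contained (no appeal to ray-uniqueness), while the paper's route avoids tracking the exact coefficient $k$ once the ``if'' direction is in hand.
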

\begin{proof} Let $\omega$ be a such a metric. Then the Ricci curvature is given by 
$$Ric_X(\omega)=-\sqrt{-1}\p\bp \log {\omega}^{n+1}=\sqrt{-1}\p\bp \log ||\Omega_l||_{\omega}^{2/l}.$$
If $c(\xi)=(n+1)l$, then $h=-\log ||\Omega_l||_{\omega}^{2/l}$ is both $\T$-invariant and $J\xi$ invariant, so  the transverse Ricci curvature satisfies
$$Ric^T(g)+\sqrt{-1}\p_B\bp_B h=(2n+2)\omega^T,$$
and equation (\ref{normalization}) thus holds. Since on a fixed ray $\R_{+}\cdot \xi$ there is at most one possible Reeb vector field satisfying (\ref{normalization}), we conclude the lemma. 
\end{proof}

 From now on we define $\mathcal H$ to be the hyperplane of vectors $\xi$ in $\ft$ satisfying $c(\xi)=(n+1)l$. Then one has an explicit characterization of the tangent space of $\mathcal H$ at $\xi_0$.

\begin{lem} An element $Y\in\ft$ satisfies $c(Y)=0$ if and only if 
\begin{equation}\label{e-2-12}
\int_M \eta_0(Y) e^{-h_0}dv_{g_0}=0.
\end{equation}
\end{lem}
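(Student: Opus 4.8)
The plan is to prove the exact linear identity
$c(Y)=(n+1)l\int_M \eta_0(Y)e^{-h_0}dv_{g_0}$ for every $Y\in\ft$; since $(n+1)l\neq 0$ the stated equivalence is immediate, and the case $Y=\xi_0$ provides a consistency check, recovering $c(\xi_0)=(n+1)l$ against $\eta_0(\xi_0)\equiv 1$ and $\int_M e^{-h_0}dv_{g_0}=1$. The first step is to re-express the definition of $c$ through $JY$. Writing $W=\tfrac12(Y-\i JY)$ for the $(1,0)$-part of $Y$, holomorphicity of $\Omega_l$ forces $\cL_{\bar W}\Omega_l=0$, so that $\cL_Y\Omega_l=\cL_W\Omega_l$ and $\cL_{JY}\Omega_l=\i\,\cL_W\Omega_l=\i\,\cL_Y\Omega_l$; combined with $\cL_Y\Omega_l=\i c(Y)\Omega_l$ this gives $\cL_{JY}\Omega_l=-c(Y)\Omega_l$. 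Passing to the natural (Calabi--Yau) volume form $\mu$ determined by $\Omega_l$, defined via local $l$-th roots $(\Omega_l\wedge\overline{\Omega_l})^{1/l}$ and globally well defined because $\Omega_l$ is nowhere vanishing, we obtain $\cL_{JY}\mu=-\tfrac{2c(Y)}{l}\mu$.

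The second step is to identify $e^{-h_0}\omega_0^{n+1}$ with $\mu$ up to a constant. The relation $Ric_X(\omega_0)+\i\p\bp h_0=0$ shows that $\log\!\big(\omega_0^{n+1}/\mu\big)-h_0$ is pluriharmonic on $X$; moreover it has homothetic degree zero, since $\cL_{r_0\p_{r_0}}\Omega_l=(n+1)l\,\Omega_l$ gives $\cL_{r_0\p_{r_0}}\mu=2(n+1)\mu=\cL_{r_0\p_{r_0}}\omega_0^{n+1}/\omega_0^{n+1}\cdot\omega_0^{n+1}$-degree and $h_0$ is basic. A pluriharmonic function of homothetic degree zero descends to a transverse-pluriharmonic basic function on the compact leaf space and is therefore constant. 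Hence $e^{-h_0}\omega_0^{n+1}=C\,\mu$ for a positive constant $C$, and consequently $\cL_{JY}\big(e^{-h_0}\omega_0^{n+1}\big)=-\tfrac{2c(Y)}{l}\,e^{-h_0}\omega_0^{n+1}$.

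The third step is to integrate this identity over $\{r_0\le 1\}$. As a top-degree form $e^{-h_0}\omega_0^{n+1}$ is closed, so $\cL_{JY}=d\iota_{JY}$ and Stokes reduces the left-hand side to a boundary integral over $M=\{r_0=1\}$, the contribution of an inner sphere $\{r_0=\varepsilon\}$ scaling like $\varepsilon^{2n+2}\to 0$. Using $\omega_0^{n+1}=\tfrac{n+1}{2^n}r_0^{2n+1}dr_0\wedge\eta_0\wedge(d\eta_0)^n$ and the fact that $\eta_0(Y)$ is basic, the radial part of $JY$ is $-\eta_0(Y)\,r_0\p_{r_0}$ while its horizontal part lies in $\cD$ and produces only terms carrying a $dr_0$ factor, which vanish upon restriction to $M$; hence $\int_M\iota_{JY}(e^{-h_0}\omega_0^{n+1})=-\tfrac{n+1}{2^n}\int_M\eta_0(Y)e^{-h_0}\,\eta_0\wedge(d\eta_0)^n$. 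Equating this with $-\tfrac{2c(Y)}{l}\int_{\{r_0\le1\}}e^{-h_0}\omega_0^{n+1}$ and evaluating the radial integral $\int_0^1 r_0^{2n+1}dr_0=\tfrac{1}{2n+2}$ makes the two sides proportional; after substituting $dv_{g_0}=\tfrac{1}{2^n n!}\eta_0\wedge(d\eta_0)^n$ and $\int_M e^{-h_0}dv_{g_0}=1$ one reads off $c(Y)=(n+1)l\int_M\eta_0(Y)e^{-h_0}dv_{g_0}$, which proves the lemma.

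I expect the main obstacle to be the second step: establishing the sharp identification $e^{-h_0}\omega_0^{n+1}=C\,\mu$ with a genuine constant and the clean transformation rule $\cL_{JY}(e^{-h_0}\omega_0^{n+1})=-\tfrac{2c(Y)}{l}e^{-h_0}\omega_0^{n+1}$. This hinges both on the holomorphicity identity $\cL_{JY}\Omega_l=\i\,\cL_Y\Omega_l$ and on the principle that a pluriharmonic function of homothetic degree zero on the cone is constant. Once these are secured, the remaining Stokes computation is routine, the only care being the bookkeeping of the $1/l$ powers and the radial/horizontal splitting of $JY$.
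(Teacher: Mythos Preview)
Your proof is correct and yields the sharper identity $c(Y)=(n+1)l\int_M\eta_0(Y)e^{-h_0}dv_{g_0}$, but it follows a genuinely different route from the paper. The paper computes $\cL_{JY}h_0$ in two ways: once as $JY(h_0)=-\tfrac12\nabla_Xh_0\cdot\nabla_X(r_0^2\eta_0(Y))$ using $JY=-\tfrac12\nabla_X(r_0^2\eta_0(Y))$, and once via $h_0=\log\|\Omega_l\|_{\omega_0}^{2/l}$, obtaining the pointwise identity
\[
\Delta_X(r_0^2\eta_0(Y))-\nabla_Xh_0\cdot\nabla_X(r_0^2\eta_0(Y))=\tfrac{2}{l}c(Y).
\]
The left-hand side, rewritten on $M$, is exactly $L(\eta_0(Y))$; since $\eta_0(Y)$ is already a Hamiltonian holomorphic potential, Lemma~\ref{l-2-4-2} gives $L(\underline{\eta_0(Y)})=0$, so $L(\eta_0(Y))=4(n+1)\int_M\eta_0(Y)e^{-h_0}dv_{g_0}$, and the equivalence follows. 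Your argument instead passes to the Calabi--Yau volume $e^{-h_0}\omega_0^{n+1}$, applies $\cL_{JY}=d\iota_{JY}$ globally, and uses Stokes over $\{r_0\le 1\}$ to turn the eigenvalue relation into a boundary integral on $M$. This is more self-contained (it bypasses Lemma~\ref{l-2-4-2} and the operator $L$ entirely) and produces the explicit constant; the paper's version is shorter because it recycles the Bochner machinery already in place. Both proofs ultimately rest on the same two facts you singled out: $\cL_{JY}\Omega_l=\i\cL_Y\Omega_l$ and the constancy of degree-zero pluriharmonic functions on the cone.
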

\begin{proof} Any such $Y$ is of the form $$Y=\frac{1}{2}J\nabla_X(r_0^2\eta_0(Y)).$$
So $$\cL_{JY}h_0=-\cL_{JY}\log ||\Omega_l||_{\omega_0}^{2/l}=-\frac{1}{2}\Delta_X(r_0^2\eta_0(Y))+\frac{2}{l}c(Y).$$ That is 
$$\frac{1}{2}\Delta_X(r_0^2\eta_0(Y))=\frac{1}{2}\nabla_X h_0\cdot \nabla_X(r_0^2\eta_0(Y))+\frac{2}{l}c(Y).$$
Lemma \ref{l-2-4-2} implies that $c(Y)=0$ if and only if $\int_M \eta_0(Y)e^{-h_0}dv_{g_0}=0.$
\end{proof}

\section{Proof of the main theorem} \label{S-3}

\subsection{Volume functional} \label{S-3-2}
As in Section \ref{S-2-4} we fix  a normalized  Sasaki manifold $(M, \xi_0, \eta_0, g_0)$ and a maximal torus $\T$ in $\Aut(\xi_0, \eta_0, g_0)$, so this defines the Reeb cone $\cR$ and the hyperplane $\cH$. 
 Now recall from \cite{MSY} the volume of a Sasaki manifold is invariant under transverse K\"ahler deformation, so it gives rise to a functional 
 $$\Vol:\cR\rightarrow \R_+. $$
For $Y\in \ft$, we have the first variation formula
 $$\delta_Y\Vol|_{\xi_0}=-2\int_{X} r_0^2\eta_0(Y)e^{-\frac{r_0^2}{2}}dV_X.$$
If $Y$ is normalized, i.e.  $Y$ satisfies \eqref{e-2-12}, 
then we have
 \begin{eqnarray*}
 Fut_{\xi_0}(JY)&=&\frac{1}{2}\int_X[ 4(n+1)\eta_0(Y)+r_0^2\Delta_X\eta_0(Y)]e^{-\frac{r_0^2}{2}}dV_X\\
 &=&2(n+1)\int_X\eta_0(Y)e^{-\frac{r_0^2}{2}}dV_X.
 \end{eqnarray*}
 So we obtain
 \begin{prop}[\cite{MSY}] \label{volume gradient}
 The gradient of the volume functional is given by the Futaki invariant. More precisely, for any normalized $Y\in \ft$, 
  \begin{equation}\label{e-2-13}
 \delta_Y \Vol |_{\xi_0}=-2Fut_{\xi_0}(JY).
 \end{equation}
 \end{prop}
This implies, in particular, the Reeb vector field of a Sasaki-Einstein metric is a critical point of the volume functional restricted on $\mathcal H$.

Recall the Reeb cone $\cR$ is an open convex subset of $\ft$. With respect to a fixed Euclidean metric on $\ft$, we have 
\begin{prop}[\cite{MSY}] \label{convexity}
The Hessian of $\Vol$ is strictly positive definite on $\cR$. 
\end{prop}

 Now we denote by $\cR'=\cR\cap \cH$ the set of all normalized Reeb vector fields. Then the proposition in particular implies that  the volume functional is a strictly convex function on the convex set $\cR'$, so its critical point, if exists, must be unique. In the toric case, it is proved in \cite{MSY1}, \cite{FOW} that the volume functional is proper, so the critical point indeed exists.  In general, we have

\begin{prop}\label{P-p}
The volume functional is proper on $\cR'$. In particular it always has a unique minimizer. 
\end{prop}

\begin{proof} We first claim that $\cR'$ is bounded in $\cH$.  Fix an arbitrary norm $||\cdot||$ on $\ft$. Since $\cR$ is open, there is a $\delta>0$ such that any $\xi\in \cR'$ with $||\xi-\xi_0||\leq \delta$ lies  in $\cR'$.
Recall by Lemma \ref{e-2-12}  for any $\xi\in \cR'$, 
\[
\int_M (\eta_0(\xi-\xi_0))e^{-h_0} \eta_0\wedge (d\eta_0)^n=0.
\]
 So there is an $\epsilon>0$ such that   for any $\xi\in \cR'$ with $||\xi-\xi_0||=\delta$ we have 
$$\min \eta_0(\xi-\xi_0)\leq -\epsilon<0.$$
For any $\xi\in \cR'$, we denote 
$$\xi'=\xi_0+\frac{1}{||\xi-\xi_0||}(\xi-\xi_0).$$ 
Then $\xi'\in \cR'$, and 
$$\min \eta_0(\xi'-\xi_0)=\frac{1}{||\xi-\xi_0||} (\min \eta_0(\xi)-1)\geq -\frac{1}{||\xi-\xi_0||}.$$
So 
$$||\xi-\xi_0||\leq 1/\epsilon.$$
This proves the claim. Now to prove the proposition, it suffices to show that for any sequence $\{\xi_i\}$ in $\cR'$ with $\lim_{i\rightarrow\infty} \xi_i=\xi\in \partial \cR'$ we have
$$\lim_{i\rightarrow\infty}Vol(\xi_i)=+\infty.$$
Clearly 
$\eta_0(\xi)\geq 0$ and $\min \eta_0(\xi)=0$. From the discussion in Section \ref{S-2-2} we know that  there is the Sasaki structure on $M$ given by the type I deformation with $(\xi_i, \eta_i=\frac{\eta_0}{\eta_0(\xi_i)})$. So
\[
\Vol(\xi_i)= \int_M \eta_i \wedge (d\eta_i)^n=\int_M (\eta_0(\xi_i))^{-n-1} \eta_0\wedge (d\eta_0)^n. 
\]
Since $\eta_0(\xi_i)$ is positive for any $i$, by Fatou's lemma, we have
\[
\int_M (\eta_0(\xi))^{-n-1} \eta_0 \wedge (d\eta_0)^n \leq \liminf_{i\rightarrow \infty} V(\xi_i). 
\] 
So we only need to show the integral in the left hand side is unbounded. Suppose $\eta_0(\xi)(p)=0$ at some point $p\in M$. Then $d(\eta_0(\xi))(p)=0$.  We choose a local coordinate chart $(x, z_1, \cdots, z_n)$ around $p$, such that $\xi_0=\p_{x}$, and $(z_1, \cdots, z_n)$ is a local transverse holomorphic coordinate. Since $\xi\in \ft$, $\eta_0(\xi)$ is a basic function with respect to $\xi_0$. Thus for $\epsilon, r>0$ sufficiently small, on $U_{\epsilon, r}=\{((x, z_1, \cdots, z_n)| |x|\leq \epsilon, \sum|z_i|^2\leq r^2 \}$ we have
$$0\leq \eta_0(\xi) \leq C \sum_i |z_i|^2. $$ Hence
\[
\int_M (\eta_0(\xi))^{-n-1} \eta_0\wedge (d\eta_0)^n \geq C \int_{|x|\leq \epsilon} \int_{\sum_i |z_i|^2\leq r^2}(\sum_i|z_i|^2)^{-n-1}dzd\bar{z}=+\infty.
\]\end{proof}

\subsection{Perelman's $\mu$ entropy} \label{S-3-3}

Perelman's entropy functionals are the key ingredients in studying Ricci flow and Ricci solitons. 
We  briefly recall these functionals in the Sasaki setting, which were introduced in  \cite{Collins, He} to study the Sasaki-Ricci flow.  
Let $(M, \xi, \eta, g)$ be a compact Sasaki manifold. The $W$ functional is defined as usual, 
\begin{equation}\label{E-4-1}
W(g, f)=\int_M e^{-f}(R+|\nabla f|^2+4(n+1)f)dv_g,
\end{equation} 
and we define the $\mu$ functional by
\[
\mu(g)=\inf \left\{ \cW (g, f): df(\xi)=0,  \int_M e^{-f} dv_g=1.\right\}
\]
The difference from the standard $\mu$ functional is that first we only consider basic functions, so $df(\xi)=0$, and second we have assigned the scaling constant $\tau$ to be $\frac{1}{4(n+1)}$.
One can show that in the definition of $\mu$ there always exists a smooth minimizer $f$ (see \cite{He} Section 9 for example). As usual, $g$ is a critical point of $\mu$ on the space of all transverse K\"ahler deformations if and only if it is a Sasaki-Ricci soliton (It is here that the particular of choice $\tau$ enters).  Furthermore, if $(M, \xi, \eta, g)$ is a Sasaki-Ricci soliton,  by a straightforward extension of Corollary 1.5 in \cite{TZZZ}, fixing a maximal torus $\T$ in $\Aut(\xi, \eta, g)$ (which must contain the one parameter subgroup generated by the potential Killing vector field $Y_h$),  $g$ is the maximum of $\mu$ among all $\T$ invariant transverse K\"ahler deformations. 

As before, we fix  a normalized  Sasaki manifold $(M, \xi_0, \eta_0, g_0)$ and $\T$. We define a functional $I: \cR'\rightarrow \R$ by
  \[I(\xi)=\max_{(\xi, \eta, g)} \mu(g)\] among all $\T$-invariant transverse K\"ahler deformations $(\xi, \eta, g)$. By Lemma \ref{l-2-6}, for any $\xi\in \cR^{'}$ we can choose a $\T$-invariant type I deformation of $(\xi_0, \eta_0, g_0)$ induced by $\eta_0(\xi)$ and we define $\mu_0(\xi)$ to be the $\mu$-functional of such a Sasaki structure. By definition we have $I(\xi)\geq \mu_0(\xi)$. So we have, combining Corollary 1.5 in \cite{TZZZ} mentioned above,  

\begin{prop} \label{mu uniform bound}
For any compact subset $K$ in $\cR'$, there is a number $c_K$ so that $I(\xi)\geq c_K$ for all $\xi\in K$. In particular, if $(\xi, \eta, g)$ is a $\T$ invariant Sasaki-Ricci soliton with Reeb vector field $\xi\in K$, then $\mu(g)\geq c_K$. 
\end{prop}

Our original strategy to prove Theorem \ref{T-main} was to run the gradient flow of the volume functional for Reeb vector fields in $\cR'$ and to deform a Sasaki-Ricci soliton correspondingly, so it is natural to consider how the $\mu$ functional behave under such deformations.  Although no longer necessary for the purpose of this paper, we include a brief discussion here.  
As before we fix a maximal torus $\T$, with Lie algebra $\ft$, and we only consider simple deformations with respect to $\T$.  Let $f$ be a minimizer function of $W(g, \cdot)$. Then $f$ is smooth (c.f. \cite{He}).  
Suppose the infinitesimal deformation 
$\delta\xi=Y\in \ft$ is generated by a normalized Hamiltonian potential $u$,
then 
\begin{prop} \label{T-4-2} Denote 
$
c_n=(2n+2)2^{n-1}(n-1)!,
$
we have
\begin{equation}\label{E-4-28}
\begin{split}
c_n^{-1}\delta \mu(g)=&2\int_X e^{-f-\frac{r^2}{2}}\left((\nabla_X f, \nabla_X u)-u\right)dV_X\\
&-\int_X e^{-f-\frac{r^2}{2}} r^2(\delta g^T, Ric^T+Hess^T f) dV_X.
\end{split}
\end{equation}
\end{prop}
Here $f$ and $u$ are trivially extended to functions over $X$, $dV_X$ is the volume form on $X$ given by by $(2n+2)r^{2n+1}dr\wedge dv_g$,  the inner product and gradient are taken with respect to the K\"ahler cone metric $g_X=dr^2+r^2g$, and $\delta g^T$ is the transverse projection of $\delta g$.  A convenient way to prove Proposition \ref{T-4-2} is to express the terms on the K\"ahler cone and compute the variation on the cone. 
The computation is straightforward but rather involved. Since we do not need this result, we refer to our previous preprint \cite{HS1}  for the details of computation.   Proposition \ref{T-4-2} has some interesting consequences. For example we have the following variational characterization of Sasaki-Einstein metrics, 

\begin{cor}
A compact Sasaki manifold $(M, g)$ is Einstein if and only if it is a critical point of $\mu$ among all simple deformations with respect to $\T$.\end{cor}
\begin{proof}
It directly follows from  \eqref{E-4-28}, when restricted to transverse Kahler deformations,  that if $g$ is a critical point, then it has to be a Sasaki-Ricci soliton, so that  the Ricci potential $h$ is the minimizer function. Then  let $u=\underline h$ (see (\ref{T-4-n}) and $Y$ be the Hamiltonian vector field generated by $u$. Then by maximality of $\T$, we know $Y\in\ft$.  Plug this $u$ into formula \eqref{E-4-28}, we obtain $\int_X e^{-h-\frac{r^2}{2}} |\nabla_X u|^2dV_X=0$. So $u$ is constant and so  is $h$. 
\end{proof}

\subsection{The main argument} \label{S-3-4}
Now we present the outline of our strategy in the proof of Theorem \ref{T-main}, leaving technical details to Section 4 and Section 5. First we classify compact Sasaki-Ricci solitons with positive transverse bisectional curvature. We have,

\begin{thm}\label{positive soliton}
A simply-connected compact Sasaki-Ricci soliton with positive transverse bisectional curvature is a simple Sasaki structure on $S^{2n+1}$. 
\end{thm}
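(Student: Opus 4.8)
The plan is to realize the deformation picture outlined in the introduction: start from the given Sasaki-Ricci soliton and deform its Reeb vector field along the negative gradient of the volume functional, staying inside the normalized hyperplane $\mathcal H$, so as to arrive at a critical point of the volume, which by \eqref{e-2-13} is a Sasaki-Einstein metric. Since the initial metric has positive transverse bisectional curvature, if one can show this positivity is preserved and that the deformation converges, then the limit is a Sasaki-Einstein metric with positive transverse bisectional curvature, and Lemma \ref{SE positive} identifies it with the round sphere. The final identification of the whole object as a \emph{simple} Sasaki structure on $S^{2n+1}$ then follows from the rigidity theorem promised in Section \ref{S-5}, together with the fact that the K\"ahler cone has been held fixed throughout (so biholomorphic to $\C^{n+1}\setminus\{0\}$), and a simple deformation by definition preserves the complex structure on the cone.

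\textbf{The steps, in order.} First I would set up the local deformation: by the implicit function theorem (Section \ref{S-4}) one can, starting from any Sasaki-Ricci soliton, perturb the Reeb vector field within $\ft\cap\mathcal H$ and solve for a nearby Sasaki-Ricci soliton, so the family of solitons is locally parametrized by Reeb vector fields. The soliton condition $L(\underline h)=0$ makes $\eta(Y)=\underline h$ a natural direction, and by \eqref{e-2-14} moving $\xi$ along $-Y$ strictly decreases the volume whenever $h$ is nonconstant, i.e. whenever the soliton is not yet Einstein. Second, one needs a priori control along the path: the monotonicity of the $\mu$-entropy (Section \ref{S-V}) yields a lower bound on $\mu$ and hence, via the volume bound, uniform control; the uniform diameter bound (Section \ref{S-7}) together with curvature estimates gives a uniform geometry bound, so the family is precompact in the Cheeger-Gromov sense. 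Third, and crucially, one invokes Proposition \ref{P-8-1} (Section \ref{S-8}): a uniform \emph{positive} lower bound for the transverse bisectional curvature along the deformation, proved by the maximum principle, which both keeps positivity from degenerating and allows the deformation to be continued. One must also handle the \emph{jumping phenomenon}: under Cheeger-Gromov limits the dimension of the relevant Lie algebra can only increase and hence jumps finitely often, after which the Reeb vector fields remain in a single fixed Reeb cone and the path exists for all time, converging to the Einstein point.

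\textbf{The main obstacle} I expect is precisely the interaction between the jumping phenomenon and the global existence of the deformation path. Locally the implicit function theorem gives a smooth soliton family, but to run the volume-decreasing flow \emph{for all time} and conclude it reaches the Einstein critical point, one must rule out the path escaping to the boundary of the Reeb cone or the torus $\T$ failing to remain maximal in the limit. Controlling the finitely many jumps — showing that after each jump the deformation re-enters a fixed Reeb cone and that the volume remains a proper, convex exhaustion so that the negative gradient flow converges to the unique critical point guaranteed by \cite{MSY} — is where the real work lies, and it relies essentially on treating regular and irregular Sasaki structures on equal footing, as emphasized in the introduction. The uniform positivity of Proposition \ref{P-8-1} is what prevents the transverse geometry from collapsing during this process, so that Lemma \ref{SE positive} applies to the limit and forces it to be the round sphere; the rigidity argument of Section \ref{S-5} then upgrades "round-sphere limit with fixed cone" to the statement that the original structure is a simple Sasaki structure on $S^{2n+1}$.
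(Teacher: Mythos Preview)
Your proposal is correct and follows essentially the same route as the paper: set up the local family of Sasaki-Ricci solitons via the implicit function theorem, run the negative gradient flow of the volume on $\mathcal H$, use the monotonicity of $\mu_X$ together with the diameter/curvature estimates for Cheeger-Gromov compactness, invoke Proposition~\ref{P-8-1} to keep strict positivity, handle the finitely many torus-dimension jumps, and then apply Lemma~\ref{SE positive} and the rigidity theorem to pull the simple-sphere identification back through each segment. One small caveat: in the paper's main argument the convergence after the final jump is obtained from the gradient structure plus uniform geometry bounds (subsequential Cheeger-Gromov limits are forced to be Einstein), not from properness/convexity of the volume functional---the latter is the alternative route sketched in Section~\ref{S-9}---so you should keep those two mechanisms separate rather than mixing them.
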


\begin{proof}
Let $(M, \xi, \eta, g)$ be such a  Sasaki-Ricci soliton. As before we choose a maximal torus $\T$ of $\Aut(\xi, \eta, g)$ such that its Lie algebra $\ft$ contains $\xi$.  Let $\cR'=\cR\cap \cH$ be the space of normalized elements in the Reeb cone $\cR$.   By Proposition \ref{P-p},  the volume functional $\Vol$ has a unique minimizer $\xi_0\in \cR'$. Let $V_0=\Vol(\xi_0)$ and $V_1=\Vol(\xi)$. Denote by $\mathcal A$ the set of all real numbers $V\in [V_0, V_1]$ such that there is a Sasaki-Ricci soliton with positive transverse bisectional curvature that is a simple deformation of $(\xi, \eta, g)$ with respect to  $\T$, with volume $V$, and with maximal torus-symmetry $\T$. 
To prove Theorem \ref{positive soliton}, it suffices to show $V_0\in \mathcal A$. If there is such a Sasaki-Ricci soliton with volume $V_0$, then by Proposition \ref{convexity} and Proposition \ref{volume gradient},  its Reeb vector field must be 
$\xi_0$ and it has vanishing Futaki invariant. Hence it is indeed a Sasaki-Einstein metric.   Lemma \ref{SE positive} then implies  it must be isomorphic to the standard Sasaki structure on $S^{2n+1}$. 

In order to show $V_0\in \mathcal A$, we want to prove that $\mathcal A$ is both open and closed in $[V_0, V_1]$. The openness follows from Theorem \ref{local deformation},  where a local deformation theorem for Sasaki-Ricci solitons is proved.  To prove the closedness, we assume there is a sequence $V_i\in \mathcal A$, with $V_i\rightarrow V_\infty$. Let $(\xi_i, \eta_i, g_i)$ be the corresponding Sasaki-Ricci soliton with positive transverse bisectional curvature. By the volume properness (Proposition \ref{P-p}), the set of $\xi_i's$ is relatively compact in $\cR'$, so we may apply Proposition \ref{mu uniform bound} to conclude that there is a uniform lower bound on $\mu(g_i)$.   Then by Theorem \ref{T-5-2} this sequence has uniformly bounded curvature (and their covariant derivatives), volume and diameter. By Cheeger-Gromov convergence theory for Riemannian manifolds, by passing to a subsequence, there are diffeomorphisms $f_i$ so that $(\xi_i', \eta_i', g_i'):=f_i^*(\xi_i, \eta_i, g_i)$ converges to a limit Sasaki-Ricci soliton $(\xi'_\infty, \eta'_\infty, g'_\infty)$ in $C^\infty$ sense.
Clearly the limit has volume $V_\infty$. By Proposition \ref{P-8-1} this limit  has positive transverse bisectional curvature. 
Now we claim $V_\infty\in \cA$. However, $(\xi'_\infty, \eta'_\infty, g'_\infty)$ might  not be the desired Sasaki-Ricci soliton (with volume $V_\infty$).
 The problem is that, due to the gauge transformations $f_i$, the complex structure  on the K\"ahler cone corresponding to $(\xi'_\infty, \eta'_\infty, g'_\infty)$ might not be isomorphic to the original one  \emph{a prior}.

Now we show that after applying a suitable diffeomorphism, the complex structure of the Kahler cone corresponding to  $(\xi'_\infty, \eta'_\infty, g'_\infty)$ indeed coincides with the original one  and the maximal torus is given by $\T$. For this purpose,  we need to use two theorems proved in Section 4.

Fix a large $k$ and $\alpha\in (0, 1)$, and fix as usual a maximal torus $\T'$ in $\Aut(\xi_\infty', \eta_\infty', g_\infty')$. First we apply the rigidity Theorem \ref{rigidity},  which says that composing by a further sequence of  $C^{k+1, \alpha}$ diffeomorphisms if necessary (we still denote the resulting diffeomorphism by $f_i$),  we may assume for $i$ sufficiently large,  $(\xi'_i, \eta_i', g_i')$ is a $C^{k,\alpha}$ simple deformation of $(\xi_\infty', \eta_\infty', g_\infty')$ with respect to $\T'$. Note in particular $\xi_i'$ is in the Lie algebra of $\T'$, and converges to $\xi_\infty'$ smoothly. In particular, pulling back by ${f_i^{-1}}^*(\xi'_\infty, \eta'_\infty, g'_\infty)$ is  a simple deformation of $(\xi_i, \eta_i, g_i)$, but with respect to the torus ${f_i^{-1}}^*\T'$.  

To complete the argument we also need to relate this torus ${f_i^{-1}}^*\T'$ with $\T$. We claim that these two tori are conjugate to each other, for sufficiently large $i$. 
We apply the local deformation Theorem \ref{local deformation} to $(\xi'_\infty, \eta'_\infty, g'_\infty)$,  which asserts that for  sufficiently large $i$, we can obtain a Sasaki-Ricci soliton $(\xi_i', \eta_i'', g_i'')$ that is a simple deformation of $(\xi_\infty', \eta_\infty', g_\infty')$ with respect to $\T'$, and with maximal torus-symmetry $\T'$.  Choose a sufficiently big $i$ and fix it. Notice that  $(\xi_i', \eta_i', g_i')$ and $(\xi_i', \eta_i'', g_i'')$ have the same Reeb vector field and have the same the complex structure on their K\"ahler cones, hence they differ only by a transverse K\"ahler deformation. In particular
 they share the same transverse holomorphic automorphism group $P$. 
  Since they are both Sasaki-Ricci solitons,  by a Calabi type theorem \cite{Ca} for Sasaki-Ricci solitons (similar to the K\"ahler case proved in \cite{Tian-Zhu2}), both $\Aut(\xi_i', \eta_i', g_i')$ and $\Aut(\xi_i', \eta_i'',  g_i'')$ are maximal connected compact subgroups of $P$, and thus  these two groups, as well as their maximal tori, are conjugate in $P$. So there is an element $h\in P$ so that $Ad_h \T'=f_i^*\T. $ Now let $f=h\circ f_i^{-1}$ and $(\xi_\infty, \eta_\infty, g_\infty)=f^*(\xi_\infty', \eta_\infty', g_\infty')$. Then $(\xi_\infty, \eta_\infty, g_\infty)$ is  a simple deformation of $(\xi_i, \eta_i, g_i)$ with respect to $\T$, and has maximal torus-symmetry $\T$. This is the desired limit Sasaki-Ricci soliton with volume $V_\infty$. Hence $V_\infty\in \mathcal A$ and this finishes the proof.
\end{proof}

We can now prove Theorem \ref{T-main}.
\begin{proof}[Proof of Theorem \ref{T-main}]Let $(M, \xi, \eta, g)$ be a simply connected compact Sasaki manifold with positive transverse bisectional curvature satisfying the normalization condition  \ref{normalization}. We first run the Sasaki-Ricci flow $(\xi(t), \eta(t), g(t))$ from $(\xi, \eta, g)$.  As is proved in \cite{He}, the flow exists for all time and converges by sequence to a Sasaki-Ricci soliton in the sense of Cheeger-Gromov. By Proposition \ref{P-2-1}, we know it has positive transverse bisectional curvature.  Theorem \ref{positive soliton} then implies that any limit is a simple Sasaki structure on $S^{2n+1}$. By Theorem \ref{rigidity} again for $t$ large enough $(\xi(t), \eta(t), g(t))$ is also a simple Sasaki structure on $S^{2n+1}$. In particular, the corresponding K\"ahler cone is bi-holomorphic to $\C^{n+1}\setminus\{0\}$. Since the Sasaki-Ricci flow is a transverse K\"ahler deformation, we conclude that the original Sasaki structure $(\xi, \eta, g)$ is also a simple Sasaki structure on $S^{2n+1}$. \end{proof}

\begin{proof}[Proof of Corollary \ref{Frankel conjecture} and \ref{orbifold Frankel}] Given an $n$ dimensional compact K\"ahler manifold $(Z, J, \omega)$ with positive bisectional curvature, we denote by $l$ the Fano index of $Z$, and $\pi:M\rightarrow Z$ the unit circle bundle over $Z$ with first Chern class given by $c_1(Z)/l$. Then $M$ is simply connected. Let $\eta$ be a connection one form on $M$, i.e. $d\eta=\pi^*\omega$. Then $M$ endowed with the metric $g_M=\pi^*g+\eta\otimes \eta$ is a compact Sasaki manifold with positive transverse bisectional curvature.  Then by Theorem \ref{T-main} $(M, g_M)$ is a simple Sasaki structure on the sphere $S^{2n+1}$. But  there is only one possible Reeb vector field for a simple Sasaki structure on $S^{2n+1}$ to be regular, i.e. when $\xi$ is proportional to $(1, \cdots, 1)$. It is easy to see then $Z$ is bi-holomorphic to $\C\mathbb P^n$. The proof of Corollary \ref{orbifold Frankel} is similar, noting that a quasi-regular Sasaki manifold is precisely a $U(1)$ bundle over a K\"ahler orbifold  such that all the local uniformizing groups inject into $U(1)$(See \cite{BG} for example).\end{proof}

\begin{proof}[Proof of Theorem \ref{T-main-2}]   Using toric geometry, it is proved in \cite{FOW} that any simple Sasaki structure on $S^{2n+1}$ can be deformed to a  Sasaki-Ricci soliton through a transverse K\"ahler deformation.  We claim they all have positive transverse bisectional curvature. Indeed, since by the construction in \cite{FOW} the moduli space of simple Sasaki-Ricci solitons on $S^{2n+1}$ is connected and the standard Sasaki structure $(M, \xi, \eta, g)$ has positive transverse bisectional curvature, it suffices to prove that  the positivity condition is both open and closed. The openness is obvious, while the closedness follows  Proposition \ref{P-8-1}. 
\end{proof}

\section{Local deformation and rigidity}\label{S-4}
\subsection{local deformation of Sasaki-Ricci solitons} \label{S-4-1}
In this subsection we study the deformation theory of Sasaki-Ricci solitons under the variation of Reeb vector fields.  Fix a compact Sasaki-Ricci soliton $(M, \xi_0, \eta_0, g_0)$ and its K\"ahler cone $(X, J, g_X)$.  Let $\Aut(\xi_0, \eta_0, g_0)$ be its automorphism group, and $\g$ be its Lie algebra.  Let $\ft$ be a maximal abelian Lie sub-algebra of $\g$ that contains $\xi_0$, and $\T$ the maximal torus of $\Aut(\xi_0, \eta_0, g_0)$ that is generated by $\ft$. 
By assumption $$Ric_0^T+\sqrt{-1}\p_{B,0}\bp_{B,0} h_0=(2n+2)\omega_0^T $$ 
with $L_{\omega_0}(\underline{h_0})=0$,  where $L_{\omega_0}$ is the modified Laplacian operator defined with respect to the metric $\omega_0$, as in Section \ref{S-2-3}. Let $Y_0\in\g$ be the Hamiltonian holomorphic vector field generated by $h_0$.  Since $g_0$ is $\T$ invariant, we know $Y_0$ commutes with all elements in  $\ft$. By maximality we see $Y_0$ lies in $\ft$. As in Section \ref{S-2-4}, we denote by $\mathcal H$ the hyperplane of normalized elements in $\ft$.  Then the deformation of Sasaki-Ricci solitons is unobstructed:

\begin{thm}\label{local deformation} There is a neighborhood $\cU$ of $\xi_0$ in $\mathcal H$ and a smooth family of   $\T$ invariant Sasaki-Ricci solitons $(\zeta, \eta_{\zeta},  g_{\zeta})$ on $M$ parametrized by the Reeb vector field  $\zeta\in\cU$. Moreover, $(\eta_{\xi_0}, g_{\xi_0})=(\eta_0, g_0)$, and for all $\zeta\in \cU$,  $(\zeta, \eta_{\zeta}, g_{\zeta})$ is a simple deformation of $(\xi_0, \eta_0, g_0)$  with respect to $\T$, and has maximal torus-symmetry $\T$, i.e. $\T$ is a maximal connected torus in $\text{Aut}(\xi_0, \eta_0, g_0)$. 
\end{thm}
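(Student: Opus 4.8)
The plan is to prove the statement by the implicit function theorem, perturbing off the known soliton $(\xi_0,\eta_0,g_0)$ and working throughout with $\T$-invariant data so that all relevant kernels are finite dimensional and controlled by $\ft$. First I would fix, for each $\zeta$ in a small neighborhood of $\xi_0$ in the normalized hyperplane $\mathcal{H}$, a background $\T$-invariant K\"ahler cone metric with Reeb field $\zeta$ produced by the Type I deformation of Lemma \ref{l-2-6}; since $\zeta\in\mathcal{H}$ this background is normalized, i.e. its transverse metric $\omega^{(0)}_\zeta$ satisfies $[\rho^T]_B=(2n+2)[\omega^T]_B$, and it depends smoothly on $\zeta$. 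I would then seek the soliton as a transverse K\"ahler deformation $\omega^T=\omega^{(0)}_\zeta+\sqrt{-1}\,\p_B\bp_B\phi$ of this background, for a $\T$-invariant basic potential $\phi$. Because the composite of a Type I deformation and a transverse K\"ahler deformation is precisely a simple deformation (the discussion following Definition \ref{simple deformation}), the claim that the family consists of simple deformations will then be automatic.

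The soliton condition with fixed vector field $Y\in\ft$ reduces, on the transverse side, to a scalar complex Monge--Amp\`ere equation of Sasaki-Ricci soliton type, schematically
\begin{equation*}
\log\frac{(\omega^{(0)}_\zeta+\sqrt{-1}\,\p_B\bp_B\phi)^n}{(\omega^{(0)}_\zeta)^n}+(2n+2)\phi+Y(\phi)=h^{(0)}_\zeta+\mathrm{const},
\end{equation*}
where $h^{(0)}_\zeta$ is the background transverse Ricci potential and the term $Y(\phi)$ encodes the change of the Hamiltonian potential of the soliton field for the deformed metric. I would package this as a smooth map $\Psi(\zeta,\phi,Y)$ from $\mathcal{H}\times B^{k+2,\alpha}\times\ft$ into a H\"older space $B^{k,\alpha}$ of $\T$-invariant basic functions, whose zero set is exactly the $\T$-invariant normalized Sasaki-Ricci solitons, and which satisfies $\Psi(\xi_0,0,Y_0)=0$ since $g_0$ is a soliton. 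The linearization of $\Psi$ in $\phi$ at this base point is, up to conventions, the self-adjoint drift operator $L_0(\psi)=\Delta_0\psi-\nabla h_0\cdot\nabla\psi+4(n+1)\psi$ of Section \ref{S-2-3}, whose kernel on $\T$-invariant basic functions is, by Lemma \ref{l-2-4-2} together with the fact that the centralizer of the maximal torus $\T$ is $\T$ itself, precisely the space $\{\theta_Z:Z\in\ft\}$ of Hamiltonian holomorphic potentials generated by $\ft$.

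The main obstacle is exactly this kernel: since the soliton is not Einstein it carries genuine automorphisms, so by self-adjointness of $L_0$ with respect to the weighted measure $e^{-h_0}dv_0$ the cokernel equals the kernel, $L_0$ alone is not surjective, and the naive implicit function theorem fails. I would resolve this by keeping the soliton field $Y\in\ft$ as an additional finite-dimensional unknown: an infinitesimal variation $\dot Y\in\ft$ contributes the potential $\theta_{\dot Y}$, and by Lemma \ref{l-2-4-2} the map $\dot Y\mapsto\theta_{\dot Y}$ is an isomorphism from $\ft$ onto $\ker L_0$, hence onto the cokernel. Consequently the combined linearization $(\psi,\dot Y)\mapsto L_0\psi+\theta_{\dot Y}$, restricted to $\psi$ in the $L^2(e^{-h_0}dv_0)$-orthogonal complement of $\ker L_0$, is a Banach space isomorphism onto $B^{k,\alpha}$. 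The implicit function theorem then yields smooth maps $\zeta\mapsto\phi_\zeta$ and $\zeta\mapsto Y_\zeta$ on a neighborhood $\cU$ of $\xi_0$, and standard elliptic bootstrapping upgrades the H\"older solutions to smooth ones. Finally, the uniqueness of the soliton field (Lemma \ref{unique potential field}, resting on the convexity in Lemma \ref{unique Futaki vanish}) confirms that $Y_\zeta$ is the genuine soliton field, so the resulting $\T$-invariant family $(\zeta,\eta_\zeta,g_\zeta)$ satisfies $(\eta_{\xi_0},g_{\xi_0})=(\eta_0,g_0)$ and is a simple deformation of $(\xi_0,\eta_0,g_0)$, as required.
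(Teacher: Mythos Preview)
Your proposal is correct and follows essentially the same approach as the paper: set up background Type I deformations, write the soliton equation as a map whose linearization in $(\phi,Y)$ is $\tfrac{1}{2}L_{\omega_0}\phi+\eta_0(Y)$, observe that this is an isomorphism from $(\ker L_0)^{\perp}\times\ft$ onto basic functions precisely because the $\T$-invariant kernel of $L_0$ equals the image of $\ft$ under $Y\mapsto\eta_0(Y)$, and apply the implicit function theorem. The only cosmetic differences are your use of H\"older rather than Sobolev spaces and the (unnecessary but harmless) appeal to Lemma~\ref{unique potential field} at the end.
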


\begin{proof} 
 By a Type-I deformation(c.f. Lemma \ref{l-2-6}),  there is a small neighborhood $\cV$ of $\xi_0$ in $\mathcal H$, and  a smooth family of $\T$-invariant Sasaki metrics $g_{\xi}$ on $M$ with Reeb vector field $\xi\in\cV$, such that $g_{\xi_0}=g_0$ is the original Sasaki-Ricci soliton.  
 Fix a large integer $k$.  We denote by $L^2_{k}$ the space of $\T$ invariant $L^2_{k}$ real valued functions on $M$, and denote by  $W_{k,\xi}$ the space of $\T$ invariant $L^2_k$ Hamiltonian holomorphic potentials of  $\omega_{\xi}$. Let $V_{k,\xi}$  be the orthogonal complement of $W_{k,\xi}$ with respect to the $L^2$ inner product defined using volume form $e^{-h_{\xi}}d\mu_{\xi}$.  Here $h_{\xi}$ is the normalized  Ricci potential of $\omega_{\xi}$, i.e. $$Ric^T(\omega_{\xi})+\sqrt{-1}\p_{B,\xi}\bp_{B,\xi} h_{\xi}=(2n+2)\omega^T_{\xi}, $$
 and $\int_M e^{-h_{\xi}}d\mu_{\xi}=1$. 
Now we choose a small open neighborhood $V'_{k+2, \xi_0}$ of $0$ in $V_{k+2, \xi_0}$, and make $\cV$ smaller if necessary, so that the transverse K\"ahler deformation $\omega_{\xi, \phi}^T=\omega_{\xi}^T+\p_{B, \xi}\bp_{B,\xi}\phi$ is still transverse K\"ahler for all $\xi\in \cV$ and $\phi\in V'_{k+2, \xi_0}$. 
Then we define a map $$F: \cV\times V'_{k+2,\xi_0}\times \ft  \rightarrow L^2_k; (\xi, \phi, Y)\mapsto \log\frac{(\omega^T_{\xi,\phi})^n}{(\omega_{\xi}^T)^n}+(2n+2)\phi+h_{\xi}+\eta_{\xi, \phi}(Y+Y_0), $$
 where $\eta_{\xi,\phi}(Y+Y_0)$ is the Hamiltonian holomorphic potential  with respect to $\omega_{\xi, \phi}$ corresponding to $Y+Y_0\in\ft$. The zeroes of the map $F$ are $\T$ invariant  Sasaki-Ricci solitons with Ricci potential in $\ft$.
 The differential of $F$ at $(\xi_0,0,0)$ along the last two components is given by $$P(\phi, Y)=\frac{1}{2}L_{\omega_0}\phi+\eta_0(Y).$$
 By Section \ref{S-2-3}, $P$ is an isomorphism.
 Thus by the implicit function theorem there is smaller neighborhood $\cU\subset\cV$,  such for any $|\xi-\xi_0|$ small there is a $(\phi, Y)$ small depending smoothly on $\xi$ such that $\omega_{\xi,\phi}$ is a $\T$-invariant Sasaki-Ricci soliton with Reeb vector field $\xi$. The fact that $\phi$ is indeed smooth follows from the standard elliptic regularity applied to the local Sasaki-Ricci soliton equation.   Notice $\omega_{\xi, \phi}$ is a transverse K\"ahler deformation of $\omega_{\xi}$, so is a simple deformation of $(\xi_0, \eta_0, g_0)$ with respect to $\T$.  Finally we can choose $\cU$ small so that $\T$ is a maximal torus  in $\Aut(\zeta, \eta_\zeta, g_\zeta)$ for $\zeta\in \cU$. Otherwise there is a sequence $\zeta_i\rightarrow \xi_0$ so that $\Aut(\zeta_i, \eta_{\zeta_i}, g_{\zeta_i})$ contains a connected tori $\T'_i$ strictly bigger than $\T$, by taking limit we see $\Aut(\xi_0,\eta_0, g_0)$ contains a connected tori $\T^{'}$ which is strictly bigger than $\T$.   This contradicts the fact that $\T$ is a maximal tori in $\Aut(\xi_0,\eta_0, g_0)$. 
 
 \end{proof}

\subsection{Rigidity of Sasaki manifolds with positive transverse bisectional curvature}\label{S-4-2}
 In this subsection we prove the local rigidity of Sasaki structures with positive transverse bisectional curvature. As we will see, this rigidity can not hold in the strict sense as in the K\"ahler case, due to the possible Type-I deformations. This will also make the proof slightly more complicated, and the main issue is that the space of basic quantities does not behave well under Type-I deformation. For this reason we will appeal to a quantitative implicit function theorem (c.f. Lemma \ref{IFT}, \ref{IFT1}). 
 
 For the purpose of doing infinite dimensional analysis, it is convenient to define the appropriate Banach space topology. In this section we will always fix a smooth background metric $\hat g$ on $M$,  then we define the $C^{k, \alpha}$ difference between any two Sasaki structures $(\xi_1, \eta_1, g_1)$ and $(\xi_2, \eta_2, g_2)$ to be $||\xi_1-\xi_2||_{C^{k, \alpha}}+||\eta_1-\eta_2||_{C^{k, \alpha}}+||g_1-g_2||_{C^{k, \alpha}}$, where the norms are measured with respect to $\hat g$. Similarly one can define other norms for other quantities. The important thing here is that we are measuring everything by the fixed metric $\hat g$.  We say $(\xi_1, \eta_1, g_1)$ is $\delta$ close to $(\xi_2, \eta_2, g_2)$ in $C^{k, \alpha}$ if their $C^{k, \alpha}$ difference is at most $\delta$.    From now on in this section we fix a large integer $k$ and $\alpha\in (0,1)$.  
 
 Now we fix a smooth Sasaki structure  $(\xi_0, \eta_0, g_0)$ on $M$, and a maximal torus $\T$ in $\Aut(M, \xi_0, \eta_0, g_0)$, and thus a maximal abelian Lie subalgebra $\ft\subset \g$ so that $\xi_0\in \ft$.  Since $\g$ is a finite dimensional vector space,  for a fixed $(k', \alpha')$, the $C^{k', \alpha'}$ norm on $\g$ (and $\ft$) is uniformly equivalent to any other fixed norm.

For  simplicity of  presentation, we introduce the notation $\sharp=\sharp[*]$, meaning that $\sharp$ is a  monotonically increasing function of $*$, which is defined for $*$ positive and small, and $\sharp$ tends to zero as $*$ tends to zero.  
 
 \begin{thm}\label{rigidity}
Suppose  $(M, \xi_0, \eta_0, g_0)$ has positive transverse bisectional curvature. Then there is a  number $\delta_0>0$, and a function $\epsilon=\epsilon[\delta]$ defined for $\delta\in (0, \delta_0]$, so that for any Sasaki structure $(\xi, \eta, g)$ on $M$ that is $\delta$ close to $(\xi_0, \eta_0, g_0)$ in $C^{k, \alpha}$, there is a $C^{k-4, \alpha}$ diffeomorphism $F$ of $M$ so that $F^*(\xi,\eta, g)$ is a simple deformation of $(\xi_0, \eta_0, g_0)$ with respect to $\T$, and is $\epsilon$ close to $(\xi_0, \eta_0, g_0)$ in $C^{k-5, \alpha}$. 
\end{thm}

The remainder of this section is devoted to the proof of this theorem.  
We will take a small ball $B=B_{\epsilon}(\xi_0)\subset \ft$ (with respect to a fixed norm on $\ft$) around $\xi_0$, where $\epsilon$ is a small positive number to be specified below. 
Since $\ft$ is a finite dimensional vector space, we can parametrize the ball $B$ by an Euclidean ball $B_\epsilon(0)$ of radius $\epsilon$: for any $s\in B_\epsilon(0)$, there is a unique $\xi_s\in B$ such that $\|\xi_s-\xi_0\|_{C^{k, \alpha}}$ is controlled by $|s|$. 
For any $\xi_s\in B$, we denote $(\xi_s, \eta_s, g_s)$ to be the Type-I deformation of $(\xi_0, \eta_0, g_0)$ with respect to $\T$. These are all smooth. By \eqref{e-2-7}, $(\xi_s, \eta_s, g_s)$ is $\epsilon_1=\epsilon_1[\epsilon]$-close to $(\xi_0, \eta_0, g_0)$ in $C^{k, \alpha}$ norm if $\epsilon$ is sufficiently small; in particular  $(\xi_s, \eta_s, g_s)$ has positive transverse bisectional curvature.

The proof of Theorem \ref{rigidity} consists of two steps. 
In step one, by possibly applying a diffeomorphism, we transform $(\xi, \eta, g)$ to one such that $\xi=\xi_s\in B$ and $\eta=\eta_s$ for some $s\in B_\epsilon(0)$, and $g$ is close to $g_s$.  In step two, we show that for any $s$, any Sasaki metric $g$ compatible with $(\xi_s, \eta_s)$ which is close to $(\xi_s, \eta_s, g_s)$ is isomorphic to a transverse K\"ahler deformation of $(\xi_s, \eta_s, g_s)$. The second step is similar to the rigidity theorem in the K\"ahler setting, but is slightly more complicated since we need uniformity in $s$.  We start with step one,  

\begin{prop}\label{P-canonical}
There is a function $\underline\delta=\underline\delta[\delta]$ with the following property. 
Suppose $(\xi, \eta, g)$ is $\delta$ close to $(\xi_0, \eta_0, g_0)$ in $C^{k, \alpha}$ norm for $\delta$ sufficiently small. Then modifying by a $C^{k-3}$ diffeomorphism if necessary, we may assume  $\xi=\xi_s\in B$, $\eta=\eta_s$ for some $s$ with $|s|\leq \underline \delta$, and  $||g-g_s||_{C^{k-4}}\leq \underline\delta$.  \end{prop}

First an application of a theorem by Grove-Karcher-Ruh (\cite{GrKa}, \cite{GrKaRu}, \cite{Kim}) gives rise to the following lemma.

\begin{lem}\label{Group identification}
There is a function $\delta_1=\delta_1[\delta]$  such that for any Sasaki structure $(\xi, \eta, g)$ that is $\delta$ close to $(\xi_0, \eta_0, g_0)$ in $C^k$, there is a $C^{k-1}$ diffeomorphism  $f$ of $M$ that is $\delta_1$ close to the identity map in $C^{k-1}$, so that viewed as subgroups of the group of  smooth diffeomorphisms,  we have a natural inclusion
$$\Aut(f^*\xi, f^*\eta, f^*g)=f^{-1}\circ \Aut(\xi, \eta, g)\circ f \hookrightarrow \Aut(\xi_0, \eta_0, g_0).$$
\end{lem}
\begin{proof}
This follows from the arguments in \cite{Kim}. For the convenience of readers we include a sketch of proof here. First we notice that  the automorphism group of a compact Sasaki manifold is always compact. Let $(M, \xi_0, \eta_0, g_0)$ be a compact Sasaki manifold. It follows from the compactness that for any $\zeta>0$, there is a $C^k$ neighborhood $\cU$ of $(\xi_0, \eta_0, g_0)$ in the space of all Sasaki structures  on $M$, such that for any $(\xi, \eta, g)$ in $\cU$,  $\Aut(\xi,\eta, g)$ is within the $\zeta/3$ neighborhood of $\Aut(\xi_0, \eta_0, g_0)$ in the $C^{k-1}$ topology in $\Diff(M)$, i.e. for any $q\in \Aut(\xi, \eta, g)$, there is a $q_0\in \Aut(\xi_0, \eta_0, g_0)$ such that $d_{C^{k-1}}(q, q_0)\leq \zeta/3$. 
By choosing a $\zeta/3$ dense net of $\Aut(\xi_0, \eta_0, g_0)$, we can define a measurable map $P: \Aut(\xi, \eta, g)\rightarrow \Aut(\xi_0, \eta_0, g_0)$, such that for any $q\in \Aut(\xi, \eta, g)$, we have $d_{C^{k-1}}(P(q),q)\leq \zeta$.  It follows that there is a constant $C_1>0$ independent of $\zeta$ so that for any $q_1, q_2\in \Aut(\xi, \eta, g)$,
$$d_{C^{k-1}}(P(q_1 q_2) \circ P(q_2)^{-1}, P(q_1))\leq C_1\zeta.$$
Hence $P$ is an almost homomorphism, in the sense of \cite{GrKaRu}.  Using the notion of center of mass for maps, it is proved in \cite{GrKaRu} that for $\epsilon$ sufficiently small, there is a measurable homomorphism $Q: \Aut(\xi, \eta, g) \rightarrow \Aut(\xi_0, \eta_0, g_0)$ with $d_{C^{k-1}}(P(q), Q(q))\leq C_2\zeta$ for some constant $C_2>C_1$ and all $q\in \Aut(\xi, \eta, g)$. 
Then by general Lie group theory,  $Q$ is indeed a Lie group homomorphism. For any $q\in \Aut(\xi, \eta, g)$ with $Q(q)=\id$,  the property of $Q$ ensures that $d_{C^{k-1}}(q, \id)\leq (C_2+1)\epsilon$. By Corollary 2.5 in \cite{GrKa}, $Q$ must be injective, and hence a Lie group embedding. Thus we obtain two  actions of $\Aut(\xi, \eta, g)$ on $M$ which are $C^{k-1}$ close. By the stability theorem for group actions \cite{GrKa}, there is a diffeomorphism $f$  which is $C^{k-1}$ close to the identity and  conjugates these two actions. Theorem A in \cite{GrKa} is only stated for $C^1$ topology, but it is straightforward to extend it to our case. Here we  emphasize that from the proof we only obtain $C^{k-1}$ regularity on the  conjugating map $f$, but elements in $\Aut(f^*\xi, f^*\eta, f^*g)$ are indeed smooth, since they belong to the larger group $\Aut(\xi_0, \eta_0, g_0).$\end{proof}

\begin{proof}[Proof of Proposition \ref{P-canonical}]
Suppose $(\xi, \eta, g)$ is $\delta$ close to $(\xi_0, \eta_0, g_0)$ in $C^{k, \alpha}$.
By Lemma \ref{Group identification},  after applying a $C^{k-1}$ diffeomorphism $f$ if necessary, we can assume $\xi\in\g$, the Lie algebra  of $\Aut(\xi_0, \eta_0, g_0)$. Moreover $f$ is $\delta_1$ close to identity,  so we can assume $(\xi, \eta, g)$ is $\delta_2=\delta_2[\delta]$ close to $(\xi_0, \eta_0, g_0)$ in $C^{k-2}$.  Since $\ft$ is a maximal abelian sub algebra of $\g$, there is an element $S$ in $\Aut(\xi_0, \eta_0, g_0)$ such that $Ad_S \xi$ lies in $\ft$.  Since $\xi$ is $\delta_2$ close to $\xi_0$ in $C^{k-2}$ and $\Aut(\xi_0, \eta_0, g_0)$ is finite dimensional, $S$ can be chosen to be $\delta_3=\delta_3[\delta]$ close to the identity in $\Aut(\xi_0, \eta_0, g_0)$ in $C^{k-1}$. Conjugating the Sasaki structure $(\xi, \eta, g)$ by $S$, we may assume $\xi\in \ft$, and $(\xi, \eta, g)$ is $\delta_4=\delta_4[\delta]$ close to $(\xi_0, \eta_0, g_0)$ in $C^{k-2}$. Again since $\ft$ is finite dimensional,  the $C^{k, \alpha}$ norm is uniformly equivalent to any fixed norm. In particular, we can assume $\xi=\xi_s\in B$ for some $s$, and then $(\xi, \eta, g)$ is $\delta_5=\delta_5[\delta]$ close to $(\xi_s, \eta_s, g_s)$ in $C^{k-2}$. 
 
Next we apply Gray's stability theorem \cite{Gr}  to obtain  an $\xi_s$-invariant diffeomorphism $F$ that is $\delta_6=\delta_6[\delta]$ close to identity in $C^{k-3}$ such that $F^*\eta=\eta_s$.  Using the Moser's trick as in the proof of \cite{Geiges} Theorem 2.2.2 to the path $\eta_t=(1-t)\eta_s+t\eta$, we  need to solve the following equation for $X_t$ (compare with equation (2.2) in \cite{Geiges}, page 61),
\[
\dot \eta_t+\iota_{X_t} d\eta_t=\mu_t \eta_t,
\]
where $\mu_t$ is a suitable function. 
Notice that $\eta$ and $\eta_s$ have the same Reeb vector field $\xi=\xi_s$. It follows that $\eta_t$ has the Reeb vector field $\xi=\xi_s$ for all $t$. Plugging in the Reeb vector field $\xi=\xi_s$ we get $\mu_t=0$ and $X_t$ is then characterized by
\[
\iota_{X_t} d\eta_t=\eta_s-\eta. 
\]
This equation has a unique solution $X_t$ if we require $X_t\in \Ker(\eta_t)$. 
Clearly $[X_t, \xi_s]=0$ since $\eta_t, d\eta_t$ are $\xi_s$-invariant. Moreover, $\|X_t\|_{C^{k-2}}\leq C\|\eta-\eta_s\|_{C^{k-2}}$ for some uniform constant $C$.  We then define
$F_t$  to be the flow of $X_t$ and it satisfies $F_t^*\eta_t=\eta_s$. Take $F=F_1$, then $F^{*}(\xi, \eta, g)=(\xi_s, \eta_s, F^{*}g)$ is $\delta_6=\delta_6[\delta]$-close to $(\xi_s, \eta_s, g_s)$ in  $C^{k-4}$.
\end{proof}

With Proposition \ref{P-canonical}, the proof of Theorem \ref{rigidity} reduces to a uniform  local rigidity theorem around $(\xi_s, \eta_s, g_s)$ for any $\xi_s\in B$.

 \begin{prop} \label{S-8-2}
 There is a number $\epsilon>0$, and a function $\tau=\tau[\delta]$ defined for $\delta\in (0, \epsilon]$ with the following effect. For $\xi_s\in B=B_\epsilon(\xi_0)$, any Sasaki structure  $(\xi_s, \eta_s, g)$ that is $\delta$-close to $(\xi_s, \eta_s, g_s)$ in $C^{k, \alpha}$,  there is a $C^{k+1, \alpha}$ diffeomorphism $F$ of $M$ so that $F^*(\xi_s, \eta_s, g)$ is a transverse K\"ahler deformation of $(\xi_s, \eta_s, g_s)$ by a potential whose $C^{k+2, \alpha}$ norm is at most $\tau$.  
 \end{prop}

For a fixed $s$ (say $s=0$), this is a standard extension of the corresponding fact in the K\"ahler setting. For the purpose of exposition we first write down the details of the proof in this case.
Recall that, any Sasaki structure on $M$  of the form  $(\xi_0,\eta_0, g)$ is induced by an integrable almost CR structure $(\cD_0=\Ker(\eta_0), \Phi)$ compatible with $(\xi_0, \eta_0)$, and the metric is given by, 
\begin{equation}\label{E4.2-k}
g=\eta_0\otimes \eta_0+\frac{1}{2}d\eta_0 (\I\otimes \Phi).
\end{equation}
 Denote by $\cJ$ the space of  all $C^{k, \alpha}$ almost CR structures compatible with $\eta_0$. For any $\Phi\in \cJ$, it induces a metric (called ``K-contact" metric in literature) by \eqref{E4.2-k}; the only difference with a Sasaki structure is that we do not require $\Phi$ to be integrable. Nevertheless, by \eqref{E4.2-k},  $\|\Phi_1- \Phi_2\|_{C^{k, \alpha}}$ (for any $\Phi_1, \Phi_2\in \cJ$) is uniformly equivalent to $\|g_1-g_2\|_{C^{k, \alpha}}$.

Denote by $\Phi_0$  the integrable  almost CR structure defined by $g_0$. Then $\Phi_0$ determines a decomposition $\cD_0\otimes \C=T_B^{1,0}\oplus T_B^{0,1}$, where $T^{1, 0}_B$ is the $\i$ eigenspace of $\Phi_0$. Any $\Phi$ that is $C^{k, \alpha}$ close to $\Phi_0$ also determines a nearby decomposition, and so corresponds to a unique element $\mu$ in $\Omega_{k, \alpha}^{0, 1}(T_B^{1,0})$.  Here $\Omega_{k, \alpha}^{p,q}(T_B^{1,0})$ denotes the space of $T_B^{1,0}$-valued $(p,q)$ forms that is of class $C^{k, \alpha}$  and that is symmetric with respect to the metric $g_0$, i.e. those which vanish under skew-symmetrization. Since this correspondence is pointwise, it  can indeed be viewed as a smooth local coordinate chart for $\cJ$ near $\Phi_0$. The integrability condition is given by the usual Maurer-Cartan equation
 \begin{equation} \label{integrability}
 \bp_B \mu+[\mu, \mu]=0. 
 \end{equation}
The deformation theory of integrable almost CR structures in $\cJ$ is governed by the following elliptic complex (similar to the K\"ahler case considered by Fujiki-Schumacher \cite{FS})
 \[0\longrightarrow C^{k+2, \alpha}_B(M;\C)\stackrel{\cD_B}{\longrightarrow}T_{\Phi_0}\J=\Omega_{k, \alpha}^{0,1}(T_B^{1,0})\stackrel{\bp_B}{\longrightarrow
}\Omega_{k-1, \alpha}^{0,2}(T_B^{1,0})\longrightarrow \cdots \]
Here $\bp_B$ and $\cD_B$ are the analogue of the Cauchy-Riemann operator and the Lichnerowicz operator for basic quantities, and $C^{k+2, \alpha}_B(M;\C)$ denotes the space of complex-valued basic functions that have \emph{average zero.}

Let $\square_B=\cD_B\cD_B^*+(\bp_B^*\bp_B)^2$.  It is  an elliptic operator, due to the $\xi_0$-invariance of the whole complex.  We define $\mathbb H$ to be the kernel of $\square_B$, and by standard elliptic theory it consists of smooth elements. Now we have

\begin{lem}\label{cohomology}
If $(\xi_0, \eta_0, g_0)$ has positive transverse bisectional curvature, then  $\mathbb H=\{0\}$. In particular $\square_B$ is invertible, and there exists a constant $C_0>0$ such that for any $\mu\in \Omega_{k, \alpha}^{0,1}(T_{B}^{1,0})$,
\begin{equation}\label{E4.2-1}
\|\mu\|_{C^{k, \alpha}}\leq C_0 \|\square_B \mu\|_{C^{k-4, \alpha}}. 
\end{equation}
\end{lem}

\begin{proof} We prove this lemma via a Bochner technique.   Fix any transverse holomorphic coordinates $\{z^i\}$, by assumption, for any two nonzero vectors $u=u^i\p_{z^i}$, $v=v^i\p_{z^i}$ 
$$R^T_{i\bar{j}k\bar{l}} u^iu^{\bar j} v^k v^{\bar l}>0.$$
By an algebraic manipulation (see \cite{Demailly} Proposition 10.14),
for any nonzero symmetric tensor $\mu=\mu^{ij} \p_{z^i}\otimes \p_{z^j}\in \text{Sym}(T_B^{1,0}\otimes T_B^{1,0})$ we have
$$R^T_{i\bar{j}k\bar{l}}\mu^{ik}\mu^{\bar j\bar l}+R^T_{i\bar{j}}\mu^{ik}\mu^{\bar k\bar j}>0.$$ 
Denote by $\Delta_B=\bp_B\bp_B^*+\bp_B^*\bp_B$ the usual basic Laplacian operator on $\Omega^{0,1}_B(T_B^{1,0})$. This should not be confused with the operator $\square_B$ we define above, associated to the Lichnerowicz operator. 
We compute, 
\begin{eqnarray*}
\Delta_B\mu
&=&\bp_B\bp_B^*\mu+\bp_B^*\bp_B\mu
\\&=&-\mu_{\bar{i}\bar{k}, k\bar{j}}-\mu_{\bar{i} \bar{j}, k\bar k}+\mu_{\bar{k}\bar{j}, \bar{i}k}
\\&=&R^T_{k\bar{j}l\bar{i}}\mu_{\bar{l}\bar{k}}+R^T_{k\bar{j}}\mu_{\bar{i}\bar{l}}-\mu_{\bar{i}\bar{j}, k\bar{k}}.
\end{eqnarray*}
For any $\mu\in \Gamma(M, \text{Sym}(T_B^{1,0}\otimes T_B^{1,0}))\cong \Omega_B^{0,1}(M,T_B^{1,0})$ with $\Delta_B\mu=0$, we have
$$R^T_{k\bar{j}l\bar{i}}\mu_{\bar{l}\bar{k}}+R^T_{k\bar{j}}\mu_{\bar{i}\bar{k}}-\mu_{\bar{i}\bar{j}, k\bar{k}}=0.$$
This clearly implies $\mu=0$. Now we show $\mathbb H=0$. This is equivalent to showing that any $\mu\in \Omega_B^{0,1}(M, T_B^{1,0})$ with $\bp_B\mu=0$ is equal to $\cD_B f$ for some complex-valued basic function $f$. By Hodge theory (for basic forms), what we have shown is that $\mu=\bp_B \alpha$ for some $\alpha\in \Omega^0_B(M, T_B^{1,0})$. In other words, $\mu_{\bar{i}\bar{j}}=\alpha_{\bar{i}, \bar{j}}$. $\mu_{\bar{i}\bar{j}}=\mu_{\bar{j}\bar{i}}$ is equivalent to $\bp_B\alpha=0$. By the positivity assumption, we have $H^1_B(M;\C)=0$ (see \cite{He} Proposition 9.4), thus there is a basic function $f$ such that $\alpha=\bp_B f$. Then $\mu=\cD_B f$. This completes the proof. 
\end{proof}

Next  we need a slice theorem for the effect of transverse K\"ahler deformation on $\cJ$. We have the decomposition 
\[\Omega_{k, \alpha}^{0,1}(T_{B}^{1,0})=\text{Im} \cD_{B} \oplus \Ker \cD_{B}^*; \ \ C^{k+2, \alpha}_B(M;\C)=\Ker \cD_B\oplus \text{Im} \cD_B^* \]
Let $\cG$ be the group of $C^{k+1, \alpha}$ strict contact transformations of $(M, \eta_0)$. 
Formally the Lie algebra of $\cG$ is $C^{k+2, \alpha}_B(M;\R)$ together with the natural Poisson algebra structure. More precisely, an element $\phi\in C^{k+2, \alpha}_B(M;\R)$ gives rise to a $C^{k+1, \alpha}$ strict contact vector field $X_\phi$ through the formula  $d\phi=\frac{1}{2}d\eta(X_\phi, \cdot)$. 
 The operator $\cD_B$ sends $\phi$ to $\bp_B(X_\phi)$, and this obviously extends $\C$-linearly to the complexification $C^{k+2, \alpha}_B(M;\C)$. The geometric meaning of the action on  imaginary functions is through the transverse K\"ahler deformation in a fixed basic class, much as the similar picture in the K\"ahler setting.  Any transverse K\"ahler deformation $\eta_t=\eta_0+d^c_B \phi_t$ gives rise, by a Moser type theorem, to an isotopy $f_t$ such that $f_t^* \eta_t=\eta_0$, and $\frac{d}{dt}|_{t=0}f_t^*\Phi=\sqrt{-1}\cD_B\dot{\phi}_0$. For our purpose we need to fix the gauge with respect to the $\cG^\C$ action but the group $\cG^\C$ is indeed not defined.   

We need some preparations. 
Denote by $C^{k+1, \alpha}_B(M, TM)$ the space of all $C^{k+1, \alpha}$ vector fields on $M$ that commute with $\xi_0$. For any $Y\in C^{k+1, \alpha}_B(M, TM)$, we define $F_Y: M\rightarrow M$ by setting $F_Y(x)=\exp_x(Y_x)$, where the exponential map is with respect to the metric $g_0$. 
For $\sigma>0$ small we consider a neighborhood $N_\sigma$ in $C^{k+1, \alpha}_B(M, TM)$ consisting of elements $Y$ with $\|Y\|_{C^{k+1, \alpha}}<\sigma$. 
Now we define a map $\Sigma: N_\sigma \rightarrow Im \p_B\subset \Omega^{2,0}_{k, \alpha, B}(M;\C)$ by
\begin{equation}\label{E4.2-sigma}
\Sigma(Y)=\p_B \gamma^{1, 0},\; \mbox{with}\; \gamma=(F_Y^{-1})^*\eta_0-\eta_0. 
\end{equation}
\begin{lem} \label{smoothness}
For $\sigma>0$ sufficiently small, $F_Y$ is a $C^{k+1, \alpha}$ diffeomorphism of $M$ for all $Y\in N_\sigma$. Moreover,   $Y\mapsto F_Y$ viewed as a map from $N_\sigma$ to $Map^{k+1, \alpha}(M, M)$ (the space of $C^{k+1, \alpha}$ maps from $M$ to itself) is a smooth map between Banach manifolds.  As a consequence, $\Sigma$ is also a well-defined smooth map. 
\end{lem}

\begin{proof}
The key point is that our definition of $F_Y$ depends pointwise on $Y$.  The standard ODE theory ensures that the exponential map $\exp$ (with respect to $g_0$),  viewed as a map from $TM$ to $M$, is smooth.  We view $Y$ as a $C^{k+1, \alpha}$ map from $M$ to $TM$, then $F_Y$ is the composition of $\exp$ with $Y$, so is in $Map^{k+1, \alpha}(M, M)$.   Similarly one can show $F_Y$ depends smoothly on $Y$. For  $Y, Z$ in $N_\sigma$
we have 
$$F_Z(x)-F_Y(x)=\int_0^1 d(\exp_x)|_{(1-t)Y(x)+tZ(x)}(Z(x)-Y(x))dt, $$
where the difference in the left hand side is understood in the sense that when $\sigma$ is small we may assume for any fixed $x$,  $F_Y(x)$ stays in a fixed coordinate chart in $M$ for all $Y\in N_\sigma$.
Viewing $d\exp$ as a smooth map from $TM\otimes TM$ to $M$ and $((1-t)Y+tZ, Z-Y)$ as a $C^{k+1, \alpha}$ map from $M$ to $TM\otimes TM$,   it is easy to see that $F_Y$ depends $C^1$ on $Y$, and its $C^1$ norm depends only on $\sigma$.  Similarly all higher order derivatives also depend only on the order and $\sigma$.  Notice if $Y=0$, then $F_Y$ is the identity map, so if we choose $\sigma$ small, we can also ensure $F_Y$ is a $C^{k+1, \alpha}$ diffeomorphism for all $Y\in N_\sigma$. 
 \end{proof}
 
 Then we have
 
 \begin{lem} \label{P}For $\sigma>0$ small enough, $\Sigma^{-1}(0)\cap N_\sigma$ is a smooth submanifold. Furthermore, there is a small neighborhood $W$ of zero in $C^{k+2,\alpha}_B(M;\C)$, and 
 a smooth diffeomorphism
 \begin{equation}\label{E4.2-p}P: W\rightarrow \Sigma^{-1}(0)\cap N_\sigma\end{equation} 
 such that $dP|_0: \phi_1+\sqrt{-1}\phi_2\mapsto X_{\phi_1}+\Phi_0 X_{\phi_2}$. Moreover we have ${(F_{P(\phi)}})^*\Phi_0\in \J$ for all $\phi\in W$. 
 \end{lem}

\begin{proof}
The tangent map $d\Sigma$ at $0\in N_\sigma$ is given by \[d\Sigma|_0(Y)=-\p_B\left(L_Y\eta_0\right)^{1, 0}=-\p_B\left(\iota_Y d\eta_0\right)^{1, 0}.\] Then $Y\in \Ker (d\Sigma|_0)$ if and only if  $(\iota_Yd\eta_0)^{1, 0}$ is a $\p_B$-closed basic $(1, 0)$-form. Since the basic cohomology $H^1_B=0$, it follows that $-(\iota_Y d\eta_0)^{1, 0}=\p_B \phi$ for some $C^{k+2, \alpha}$ complex-valued basic function $\phi$. For $\phi=\phi_1+\i \phi_2\in  C^{k+2, \alpha}_{B}(M;\C)$, denote $Y_{\phi}=X_{\phi_1}+\Phi_0 X_{\phi_2}$, where $X_{\phi_i}$ is defined by $d\phi_i=\frac{1}{2}d\eta_0(X_{\phi_i}, \cdot)$ with $i=1, 2$. 
Hence we have 
\begin{equation}\label{E4.2-ker}\Ker (d\Sigma|_0)=\left\{Y_\phi|\phi \in C^{k+2, \alpha}_{B}(M;\C)\right\}\simeq C^{k+2, \alpha}_{B}(M;\C). \end{equation}
%By the implicit functional theorem, if $\sigma>0$ is sufficiently small, then $\Sigma^{-1}(0)\cap N_\sigma$ is a smooth submanifold with  tangent space at zero given by $\Ker(d\Sigma|_0)$.  
Now $d\Sigma|_0$ is surjective, and we can write down a bounded right inverse $\hat \Sigma_R^{-1}$: given $\beta\in Im\p_B\subset \Omega^{2,0}_{k,\alpha, B}(M;\C)$,  we define
\begin{equation}\label{E4.2-inverse}
\hat \Sigma_R^{-1}(\beta)=Y\; \mbox{with}\; \left(\iota_Y d\eta_0\right)^{1,0}=\Delta_{\p_B}^{-1}\p_B^*\beta.\end{equation} 
Note that $\Delta_{\p_B}$ is a positive operator on $\Omega^{1, 0}_{k+1, \alpha, B}$ since $H^1_B=0$. We consider the map
\[
Q: N_\sigma\rightarrow Im(\p_B)\oplus \Ker(d\Sigma|_0)\; \mbox{with}\; Q(Y)=\left(\Sigma(Y), Y-\hat \Sigma^{-1}_R d\Sigma|_0(Y)\right)
\]
It is ready to see that $Q(0)=0$, and $dQ|_0$ is invertible. By the implicit function theorem and  by \eqref{E4.2-ker}, we can then define $P(\phi)=Q^{-1}(0, Y_\phi)$ 
for any $\phi\in C^{k+2, \alpha}_{B}(M;\C)$ in a small neighborhood $W$ of zero.  By making $W$ and $\sigma$ small we may assume $P$ is a diffeomorphism onto $\Sigma^{-1}(0)\cap N_\sigma$. It is also easy to check that
\[
dP|_0: \phi_1+\sqrt{-1}\phi_2\mapsto X_{\phi_1}+\Phi_0 X_{\phi_2}
\] 
Note that by definition of $P$, $\p_B ((F^{-1}_{P(\phi)})^*\eta_0-\eta_0)^{1, 0}=0$. If $\phi\in W$, then $(F^{-1}_{P(\phi)})^{*}d\eta_0$ is a basic $(1, 1)$-form. Hence  $(F^{-1}_{P(\phi)})^*d\eta_0$ is compatible with $\Phi_0$ and  is indeed a transverse K\"ahler deformation of $(\xi_0, \eta_0, g_0)$; in other words, ${(F_{P(\phi)})}^*\Phi_0$ is compatible with $d\eta_0$, i.e. ${(F_{P(\phi)}})^*\Phi_0\in \J$.  
\end{proof}

With the discussion above,  we are ready to prove Proposition \ref{S-8-2} for $s=0$. 

\begin{proof}[Proof of Proposition \ref{S-8-2} when $s=0$]
 Denote by $\cU_\tau$ the $\tau$-neighborhood of zero in $\Ker(\cD_B^*)\subset \Omega^{0,1}_{k, \alpha}(T_B^{1,0})$  and by $\cV_{\tau}$ the $\tau$-neighborhood of zero in $C^{k+2,\alpha}_B(M;\C)$.
We define a smooth map 
\[
R: \cU_\tau\times \cV_\tau\rightarrow \Omega_{k, \alpha}^{0,1}(T_{B}^{1,0})\;\mbox{with}\; R(\mu, \phi)={(F_{P(\phi)})}^{*}\Phi_0+\mu. 
\]
 Here the addition is understood in the sense that we have chosen the coordinate chart that identifies a neighborhood of $\Phi_0$ in $\cJ$ with a neighborhood of zero in $\Omega^{0, 1}_{k, \alpha}(T_B^{1,0})$. $R$ is smooth since $F, P$ are both smooth, and $\Phi_0$ is a smooth almost CR structure. 
One can  compute the differential of $R$ at $(0, 0)$
\begin{equation}
dR|_{(0, 0)}(\nu, \psi)=\nu+L_{dP(\psi)} \Phi_0=\nu+\cD_B\psi. 
\end{equation}
This implies that $dR_{(0, 0)}$ is an isomorphism. Hence by implicit function theorem, there is a constant $\delta_0>0$ so that for any $\Phi\in\cJ$ that is $\delta$ close to $\Phi_0$ with $\delta\leq \delta_0$, we can represent $\Phi$ by an element $\nu=R(\mu, \phi)$ for some $(\mu, \phi)\in \cU_\tau\times \cV_\tau$ with $\tau=\tau[\delta]$. Denote $\mu_1={(F_{P(\phi)})}^{*}\Phi_0$ and $\nu=\mu_1+\mu$. Then we further have 
\[
\|\mu_1\|_{C^{k, \alpha}}\leq C_3\|\phi\|_{C^{k+2, \alpha}}
\]
for   some uniform constant $C_3$.
Now if we assume $\Phi$ is integrable, then  we have $\bar\p_B \nu+[\nu, \nu]=0$. Since $\Phi_0$ is integrable, we also have $\bar \p_B\mu_1+[\mu_1, \mu_1]=0$. Then we compute
\[
\bp_B \mu=-2[\mu_1, \mu]-[\mu, \mu]. 
\]
By \eqref{E4.2-1}, we have, 
\begin{equation}
\begin{split}
\|\mu\|_{C^{k, \alpha}}\leq& C_0 \|\square_{B,s}\mu\|_{C^{k-4, \alpha}}
=C_0\|\bp_{B,s}^*\bp_{B,s}\bp_{B, s}^* [\mu, \mu]\|_{C^{k-4, \alpha}}\\
\leq& C_4 \|\mu\|_{C^{k, \alpha}}(\|\mu\|_{C^{k, \alpha}}+\|\mu_1\|_{C^{k, \alpha}}). 
\end{split}
\end{equation}
When $\delta$ is sufficiently small, this implies $\mu=0$. This completes the proof. 
\end{proof}

To completely prove Proposition \ref{S-8-2} we  need to carry out the above discussion uniformly at $(\xi_s, \eta_s, g_s)$ for all $s\in B_\epsilon(0)$, and we will add a subscript $s$ in the notation. 

First we note that the Sasaki structure $(\xi_s, \eta_s, g_s)$ has positive transverse bisectional curvature for any $\xi_s\in B$; hence as in Lemma \ref{cohomology}, we have $\mathbb{H}_s=0$. Moreover, we have the following uniform estimate,
\begin{lem} \label{lem8-5}
There is a uniform constant  $C_5>0$ such that, for any $s\in B_\epsilon(0)$ and for all $\mu\in\Omega_{k, \alpha, s}^{0,1}(T_{B, s}^{1,0})$,
 \[ ||\mu||_{C^{k, \alpha}}\leq C_5||\square_{B, s}\mu||_{C^{k-4, \alpha}}. \] 
\end{lem}
\begin{proof}Clearly $\square_{B, s}$ is a positive operator by the fact that $\mathbb{H}_s=0$. 
Hence if the statement is not true, then there is a sequence $s_i$,  $\mu_i \in\Omega_{k, \alpha, s_i}^{0,1}(T_{B, s_i}^{1,0})$ and positive $\l_i$, with $||\mu_i||_{C^{k, \alpha}}=1$, $\square_{B, s_i} \mu_i=\lambda_i \mu_i$, and $\lambda_i\rightarrow 0 $. By compactness and elliptic regularity we may assume $s_i\rightarrow s_\infty$, and $\mu_i\rightarrow \mu_\infty$ in $C^{k, \alpha}$. Clearly $\mu_\infty \in  \Omega_{k, \alpha, s_\infty}^{0,1}(T_{B, s_\infty}^{1,0})$, and $\square_{B, s_\infty}\mu_\infty=0$.  However $\mathbb H_{s_\infty}=0$ implies $\mu_\infty=0$. This contradicts that $||\mu_i||_{C^{k, \alpha}}=1$. 
\end{proof}

Next we need to show that $\Sigma_s$ and $P_s$, as defined  in \eqref{E4.2-sigma} and \eqref{E4.2-p} behave uniformly.
\begin{lem}
By making $\epsilon$ smaller if necessary, we may assume there is a constant $\sigma>0$ independent of $s$ with $|s|\leq\epsilon$, and smooth maps $P_s$ from  $N_{\sigma, s}$ to $C^{k+1, \alpha}_{B, s}(M, TM)$ with $P_s(0)=0$, $dP_s|_0(\phi_1+i\phi_2)=X_{\phi_1, s}+\Phi_s X_{\phi_2, s}$ and $P_s$ has uniformly bounded Hessian on $N_{\sigma, s}$ independent of $s$. Moreover, we have that for all $\phi\in N_{\sigma, s}$, $F_{P_s(\phi)}^*\Phi_s\in \J_s$. \end{lem}

\begin{proof}The argument is the same  as in Lemma \ref{smoothness} and Lemma \ref{P} except that we need to use a quantitative implicit function theorem (Lemma \ref{IFT1}). It suffices to show that $\Sigma_s$ has a uniformly bounded Hessian in a neighborhood of $0$ with definite size independent of $s$, and $d\Sigma_s|_0$ has a uniformly bounded right inverse. The first condition can be easily checked similar as in the proof of Lemma \ref{smoothness}. By \eqref{E4.2-inverse}, the second condition requires a uniform positive lower bound of $\Delta_{\p_B, s}$ on $\Omega^{1, 0}_{k+1, \alpha, B, s}$. The latter can be proved  using a contradiction argument, similar to the proof of Lemma \ref{lem8-5}. 
\end{proof}

With these prepared, we now start proving a uniform slice theorem. Similar as before,  for $\epsilon$ small, we have for $\tau, |s|\leq \epsilon$ smooth maps 
$$R_s: \cU_{\tau, s} \times \cV_{\tau, s} \rightarrow \Omega_{k, \alpha, s}^{0,1}(T_{B, s}^{1,0}). $$
which sends $(\mu, \phi)$ to $({F_{P_s(\phi)}})^*\Phi_s+\mu$.   The differential of $R_s$ at $(0, 0)$ is given by ${dR_s}|_{(0, 0)}(\nu, \psi)=\nu+\cD_{B, s}\psi$. 

\begin{lem} \label{representation}
 There are a positive constant $\delta'$, and a function $\tau=\tau[\delta]$,  such that if $\delta\leq \delta', |s|\leq \delta'$, then  for any almost CR structure  $\Phi$ that is compatible with $(\xi_s, \eta_s)$, and is $\delta$ close to the one defined by $(\xi_s, \eta_s, g_s)$ in $C^{k, \alpha}$,  there are $\mu_s\in \cU_{\tau, s}$ and $\phi_s\in \cV_{\tau, s}$, such that $\Phi$ is represented by $R_s(\mu_s, \phi_s)$.   
\end{lem}

\begin{proof}
This follows again from the implicit function theorem (Lemma \ref{IFT}).  To apply Lemma \ref{IFT},   we need uniform bounds on the norm of the inverse of $dR_s|_{(0, 0)}$, and on the norm of Hessian of $R_s$. For the Hessian bound of $R_s$,  since $R_s$ is linear on $\mu$ by definition we only need to check this for the map
\[
\phi\rightarrow ({F_{P_s(\phi_s)}})^*\Phi_s.
\]
Similar as in Lemma 4.7, $F$ and $P$ are both smooth map with bounded Hessian, depending only the geometry of the background metric $(\xi_s, \eta_s, g_s)$ and the norm of $\phi_s$. Hence 
the Hessian bound of $R_s$ on $\cU_{\tau, s}\times \cV_{\tau, s}$ is uniformly bounded as long as $\tau$ and $s$ are less than some small but fixed positive number.

To show $dR_s^{-1}|_{(0, 0)}$ is uniformly bounded,  we claim that there exist $s_1, C_5>0,$ such that for any $s$ with $|s|\leq s_1$,  and  $\psi_s \in C^{k+2, \alpha}_{B, s}(M;\C)$, we have 
\begin{equation}\label{uld}
||\cD_{B, s}^*\cD_{B, s} \psi_s||_{C^{k-2, \alpha}}\geq C_5 ||\psi_s||_{C^{k+2, \alpha}}. 
\end{equation}
Given \eqref{uld}, we show there is a uniformly positive lower bound of $dR_s|_{(0,0)}$. First we note that,  there is a uniform constant $C^{'}>0$ independent of $s$,  such that for any $h\in \Omega^{0,1}_{k, \alpha, s}(T^{1,0}_{B, s})$ ($\cD_{B, s}^*$ is a second-order differential operator with smooth coefficients determined by the background metric)
\[
\|\cD_{B, s}^*h\|_{C^{k-2, \alpha}}\leq C^{'}\|h\|_{C^{k, \alpha}}
\]
So applying the above to $h=\nu_s+\cD_{B, s}\psi_s$, we have (by definition $\cD_{B, s}^*\nu_s=0$), 
\[
\|\cD_{B, s}^*\cD_{B, s}\psi_s\|_{C^{k-2, \alpha}}=\|\cD_{B, s}^*\left(\nu_s+\cD_{B, s}\psi_s\right)\|_{C^{k-2, \alpha}}\leq C^{'}\|\nu_s+\cD_{B, s}\psi_s\|_{C^{k, \alpha}}. 
\]
Hence given \eqref{uld}, there is a $C_6=C_5/C^{'}$, 
$$||\nu_s+\cD_{B, s}\psi_s\|_{C^{k, \alpha}}\geq C_6 \|\psi_s||_{C^{k+2, \alpha}}. $$
Thus 
\[
\begin{array}{lcl}
||\nu_s||_{C^{k, \alpha}}&\leq& ||\nu_s+\cD_{B, s}\psi_s\|_{C^{k, \alpha}}+||\cD_{B, s}\psi_s||_{C^{k, \alpha}}\\
&\leq& ||\nu_s+\cD_{B, s}\psi_s\|_{C^{k, \alpha}}+C_7 ||\psi_s||_{C^{k+2, \alpha}}\\
&\leq & (1+C_6^{-1}C_7)  ||\nu_s+\cD_{B, s}\psi_s\|_{C^{k, \alpha}}.
\end{array}
\]
Let $C_8=\frac{1}{2}\min((1+C_6^{-1}C_7)^{-1}, C_6)$, then we obtain
\begin{equation}\label{E-uniform}
\|\nu_s+\cD_{B, s}\psi_s\|_{C^{k, \alpha}}\geq C_8 (\|\nu_s\|_{C^{k, \alpha}}+\|\psi_s\|_{C^{k+2, \alpha}}),
\end{equation}
which is the desired uniform lower bound. 

So it suffices to prove \eqref{uld}.   We again argue by contradiction. Suppose it is not true, then  we may choose a sequence $s_i\rightarrow s_\infty$, an element $\phi_i\in C^{k+2, \alpha}_{B, s_i}(M;\C)$ that has $C^{k+2, \alpha}_{B, s_i}$ norm $1$, and that is $L^2$ orthogonal to $\Ker \cD_{B, s_i}$ (with respect to $g_{s_i}$), such that $\cD_{B, s_i}^*\cD_{B, s_i}\phi_i=\lambda_i  \phi_i$ with $\lambda_i\rightarrow 0$. For simplicity of notation we assume $s_\infty=0$. 
As in the proof of Lemma \ref{lem8-5} we may obtain a smooth limit $\phi_\infty\in C^{k+2, \alpha}_{B, 0}(M;\C)$ with $C^{k+2, \alpha}_{B, 0}$ norm $1$, and $\cD_{B, 0}\phi_\infty=0$. 

Now we  notice that by definition, $\Ker \cD_{B, s_i}= \Lambda_s/\C\langle \xi_s-iJ\xi_s\rangle$, where $\Lambda_s$ is the space of holomorphic vector fields on the K\"ahler cone which commute with $\xi_s$. The complex structure of the K\"ahler cone is fixed, but since $\xi_s$ is varying $\Lambda_s$ does not have constant dimension in general.  Here we need to make use of the limiting procedure in order to draw a contradiction. Let $\T_i$ and $\T_0$ be the closed  subtori of $\T$ generated by $\xi_{s_i}$ and $\xi_0$ respectively. Fix a flat metric on $\T$, then by passing to a subsequence we may assume $\T_i$ converges to a closed subset $\T_\infty$ under the Hausdorff metric. Clearly $\T_\infty$ is also a subgroup of $\T$, so $\T_\infty$ is a Lie subgroup and thus also a compact torus.  It contains $\T_0$ but in general they are not equal.  

We claim for $i$ sufficiently large $\T_i$ is contained in $\T_\infty$. To see this, we choose an appropriate integral basis $\{e_1, \cdots, e_r\}$ of $\ft$ so that the first $d$ elements $e_1, \cdots, e_d$ generate the Lie algebra $\ft_\infty$ of $\T_\infty$. So we may identify $\T$ with the standard $(S^1)^r$ and $\T_\infty$ with $(S^1)^d$. Then if an element $\xi$ is not in $\ft_\infty$, then  we may write $\xi=\xi'+\sum_{j=d+1}^r a_j e_j$ for $\xi'\in \ft_\infty$ and for simplicity we may assume $a_{r}\neq0$, then on the torus $\T_\xi$ generated by $\xi$, there is always an element with the last co-ordinate equal to $e^{i\pi}\in S^1$. This implies the Hausdorff distance between $\T_\xi$ and $\T_\infty$ is bounded below by a fixed computable positive number. So we have proved the claim. 

Since $\phi_i$ is invariant under $\T_i$ by construction, we know $\phi_\infty$ is invariant under $\T_\infty$. So $\phi_\infty$ corresponds to a holomorphic vector field  $Y$ on the K\"ahler cone that commutes with $\ft_\infty$. In particular $Y$ commutes with $\xi_{s_i}$ for $i$ large. Then there is an element $\psi_i\in \Ker \cD_{B, s_i}$ that generates $Y$, and since $(\xi_{s_i}, \eta_{s_i}, g_{s_i})$ converges smoothly to $(\xi_0, \eta_0, g_0)$,   we know  $\psi_i$ also converges smoothly to $\phi_\infty$. On the other hand, by definition $\phi_i$ is $L^2$ orthogonal to $\psi_i$, and this implies $\phi_\infty=0$. Contradiction. 
 \end{proof}

Now we return to the proof of Proposition \ref{S-8-2} and the remaining argument is exactly the same as the case $s=0$. 
We apply the above lemma to the integrable almost CR structure defined $(\xi_s, \eta_s, g)$,  which is represented by $\nu$ in the local coordinate chart.  By the above Lemma we can write $\nu=F_{P_s(\phi)}^*\Phi_s+\mu$ with $\mu\in \cU_{\tau, s}$ and $\phi\in \cV_{\tau, s}$. Let $\mu_1=F_{P_s(\phi)}^*\Phi$.  We have $||\mu_1||_{C^{k, \alpha}}\leq C_{9} ||\phi||_{C^{k+2, \alpha}}$.  Then by the integrability condition (\ref{integrability}) we have $\bp_{B, s} \nu+[\nu, \nu]=0$ and $\bp_{B, s}\mu_1+[\mu_1, \mu_1]=0$.  Then 
$$\bp_{B, s}\mu=-2[\mu_1, \mu]-[\mu, \mu]. $$
Then we apply Lemma \ref{lem8-5}, 
\[
\begin{split}||\mu||_{C^{k, \alpha}}&\leq C_5^{-1} ||\square_{B,s}\mu||_{C^{k-4, \alpha}}=C_5^{-1}||\bp_{B,s}^*\bp_{B,s}\bp_{B, s}^* [\mu, \mu]||_{C^{k-4, \alpha}}\\
&\leq C_{10} ||\mu||_{C^{k, \alpha}}(||\mu||_{C^{k, \alpha}}+||\mu_1||_{C^{k, \alpha}}), \end{split}\]
for a uniform constant $C_{10}$. For $\delta$ and $s$ small (depending only on $C_9, C_{10}$),  this implies $\mu=0$. So we conclude that there exists a positive constant $\delta^{''}$, as long as $\delta, |s|\leq \delta''$, any Sasaki structure  $(\xi_s, \eta_s, g)$ that is $\delta$ close to $(\xi_s, \eta_s, g_s)$ in  $C^{k, \alpha}$ is represented by $R_s(0, \phi)$ for some $\phi\in C^{k+2,\alpha}_{B, s}(M;\C)$ with $|\phi|_{C^{k+2, \alpha}}\leq \tau[\delta]$. Then  $F_{P_s(\phi)}^*(\xi_s, \eta_s, g)$ is a transverse K\"ahler deformation of $(\xi_s, \eta_s, g_s)$. It is also easy to see the potential is bounded by $\tau[\delta]$.  This proves Proposition \ref{S-8-2}. 

\begin{lem}  \label{IFT}
Suppose $E$ and $F$  are two Banach spaces. Let $U$ be an open neighborhood of the origin in $E$. Let $f$ be a smooth map from  $U$  to $F$ with $f(0)=0$. Suppose $L=df|_{0}$ is invertible, then we can choose a smaller neighborhood $U'$ of $U$ such that $f: U'\rightarrow f(U')$ has a two sided inverse.  More precisely, Let $K_0$ be the norm of $L^{-1}$ and $K_1$ be the supremum of $|D^2f|$ over $U$. Choose $r$ small so that $K_0K_1r\leq1/2$ and $B_r\subset U$,
 then $f^{-1}$ is well-defined for $y\in F$ with $|y|\leq \frac{1}{2}K_0^{-1}r$, and for such $y$ we have $f^{-1}(y)\in B_r$. 

\end{lem}
\begin{proof}  This is standard. We write $f(x)=Lx+R(x)$, and define $T_y(x)= L^{-1}y-L^{-1}R(x)$.  We want to show $T_y$ is a contraction mapping from $B_r$ to itself, for $|y|\leq\frac{1}{2}K_0^{-1}r$.  This follows from easy computations. First we have, for $|y|\leq \frac{1}{2}K_0^{-1}r, |x|\leq r$ and $r$ such that $K_0K_1 r\leq 1/2$,
 \[|T_y (x)|\leq K_0|y|+K_0|Rx|\leq K_0|y|+K_0 K_1|x|^2\leq r.\]   Then we compute
\[\begin{split}|T_y (x_1)-T_y (x_2)|=& |L^{-1} (f(x_1)-f(x_2))-(x_1-x_2)|\\=&| L^{-1} Df(x')(x_1-x_2)-L^{-1}Df(0)(x_1-x_2)|\\ \leq& K_1K_0r |x_1-x_2|\leq \frac{1}{2}|x_1-x_2|,\end{split}\]where $x'$ is a point on the straight line joining $x_1$ and $x_2$.

\end{proof}

\begin{lem} \label{IFT1}
Suppose $E$ and $F$ are two Banach spaces. Let $U$ be an open neighborhood of the origin in $E$. Let $f$ be a smooth map from $U$ to $F$ with $f(0)=0$. Suppose $L=df|_{0}$ has a right inverse $\hat L: F\rightarrow E$.  Let $K_0$ be the norm of $\hat L$ and $K_1$ be the supremum of $|D^2f|$ over $U$. Choose $r$ so that $(K_0+1)K_1r\leq 1/2$ and $B_r\subset U$, and let $s=\frac{1}{2}(K_0+1)^{-1}r$, then we have a smooth map $P$ from $B_s\cap \Ker L$ to $E$, which is onto a neighborhood of $0$ in $f^{-1}(0)$,  so that  $dP|_0$ is the obvious inclusion of $\Ker L$ into $E$, and 
$$Im P\subset f^{-1}(0)\cap B_r.$$ Moreover we have a Hessian bound on $P$ in terms $K_0, K_1$.
\end{lem}
\begin{proof}
We  reduce to the situation of the previous lemma by considering the map $Q: U\rightarrow F\oplus Ker L; x\mapsto (f(x), x-\hat L L(x))$. Then $P(y)=Q^{-1}(0, y)$.  \end{proof}

\section{Compactness of Sasaki-Ricci solitons with positive curvature}\label{S-5}

\subsection{Geometric bound} \label{S-5-1}
In this section we prove compactness of Sasaki-Ricci solitons with positive transverse bisectional curvature.
First we need the following result for Sasaki-Ricci solitons.

\begin{prop}\label{P-5-1}
Let $(M_i, \xi_i, \eta_i, g_i)$ be a sequence of compact Sasaki Ricci solitons of dimension $2n+1$.  Assume that the volume $\Vol(M_i, g_i)$ is uniformly bounded  above by $V$ and the entropy $\mu_i(M_i, g_i)$ is uniformly bounded below by $\mu$. Then there are constants $D, R>0$ depending only on $V$, $\mu$ and $n$,  such that the transverse diameter $diam^T(M_i, g_i)\leq D$ and the transverse scalar curvature $|R^T(M_i, g_i)|\leq R$ for all $i$.  \end{prop}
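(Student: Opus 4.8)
The plan is to transplant Perelman's a priori estimates for the normalized K\"ahler--Ricci flow on Fano manifolds, carried out in the Sasaki category in \cite{He, Collins}, to the sequence of solitons. The conceptual point is that each soliton $(M_i,\xi_i,\eta_i,g_i)$ generates a \emph{self-similar} solution of the transverse Sasaki--Ricci flow, moving only by the transverse holomorphic transformations generated by $JY_i$. Along this solution the transverse K\"ahler class, the volume, and the $\mu_X$ entropy are all preserved; in particular the entropy stays equal to its soliton value and hence $\ge\mu$, while the volume stays $\le V$. So it suffices to show that Perelman's estimates can be run with constants depending only on $\mu$, $V$ and $n$.

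First I would extract non-collapsing from the entropy bound. Since the $\cW$/$\mu_X$ functional of Section~\ref{S-V} encodes a logarithmic Sobolev inequality for basic data, the hypothesis $\mu_i(g_i)\ge\mu$ yields a uniform $\kappa$-noncollapsing constant $\kappa=\kappa(\mu,n)>0$ and a uniform transverse (basic) Sobolev constant $C_S=C_S(\mu,n)$, exactly as in \cite{He}. In parallel, I record the elliptic structure of the Ricci potential. Working with basic functions on $M_i$, the minimizer $f_i$ of the $\mu$ functional exists, satisfies \eqref{E-4-3}, and by the soliton condition is the transverse Ricci potential $h_i$ up to an additive constant, so $L(\underline{h_i})=0$. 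Tracing \eqref{e-2-11} against $\omega_i^T$ then expresses the transverse scalar curvature $R^T_i$ through $\Delta_B h_i$ up to a dimensional constant; thus a bound on $h_i$ and $\Delta_B h_i$ controls $R^T_i$, and since $R=R^T-2n$ it controls the ordinary scalar curvature as well.

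The heart of the argument is the uniform bound on the Ricci potential. Here I would run Perelman's iteration: feeding the uniform Sobolev inequality of the first step into a Moser iteration for the equations satisfied by $f_i$, together with the maximum principle applied to the soliton identity as in \cite{Chen-Tian1, He}, to bound $\sup_{M_i}|f_i|$, $\sup_{M_i}|\nabla f_i|$ and $\sup_{M_i}|\Delta_B f_i|$ by constants depending only on $\mu$, $V$ and $n$. The normalization $\int_{M_i}e^{-f_i}\,dv_{g_i}=1$, combined with the volume upper bound and non-collapsing, pins down the additive constants and prevents the potential from drifting. With these in hand the trace identity gives $|R^T_i|\le R=R(\mu,V,n)$.

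Finally, for the transverse diameter I would combine the scalar-curvature bound with non-collapsing: the gradient bound on $f_i$ together with Perelman's distance-distortion estimate, as exposed in \cite{Sesum-Tian}, or equivalently a Bishop--Gromov comparison on the local leaf space using the lower Ricci bound extracted from the soliton identity, forces $\mathrm{diam}^T(M_i,g_i)\le D=D(\mu,V,n)$. The main obstacle is the third step, the uniform Ricci-potential and scalar-curvature bound, which is precisely Perelman's deepest estimate. The genuine difficulty in the Sasaki case is to carry the whole iteration out in terms of basic/transverse data on a possibly irregular foliation while keeping every constant dependent only on $\mu$, $V$ and $n$; the resolution is to perform all integrations and the Moser iteration on the cone $X_i$ via the $\mu_X$ functional of Section~\ref{S-V}, exploiting that the self-similar flow keeps the entropy exactly constant.
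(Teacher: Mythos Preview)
Your proposal is correct and follows essentially the same approach as the paper: both recognize that a Sasaki--Ricci soliton is a self-similar solution of the Sasaki--Ricci flow and then invoke the Perelman-type transverse estimates of \cite{He, Collins}, observing that the constants there depend only on $n$, the volume, and a lower bound for the $\mu$ functional of the initial metric. The paper's write-up is in fact briefer than yours---it simply cites \cite{Collins, He, Tian-Zhang} and omits the details---but it does single out one ingredient you leave implicit: the volume \emph{lower} bound $V\ge \exp(\mu/4(n+1)-2n)$, obtained from the soliton identity $f=(\mu+|\nabla f|^2+R^T)/4(n+1)-2n$ together with $R^T\ge 0$ (maximum principle as in \cite{Ivey}) and the normalization $\int_M e^{-f}\,dv_g=1$; this is what anchors the additive constant in $f$ and makes the Perelman iteration close uniformly across the sequence.
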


This is a generalization of the corresponding results for K\"ahler-Ricci solitons, stated in \cite{Tian-Zhang}.  The proof is based on Perelman's results on K\"ahler-Ricci flow (see \cite{Sesum-Tian}).   In the Sasaki setting, one has to replace every quantity by its ``transverse" version. 
First we recall the definition of transverse distance on Sasaki manifolds.
\begin{defi}For $x, y\in M$, let $\cO_x, \cO_y$ denote the orbits of the Reeb vector field $\xi$ through $x, y$ respectively, then define
\[
d^T(x, y)=d(\overline{\cO_x}, \overline{\cO_y}). 
\]
\end{defi}

Then one has an obvious notion of ``transverse diameter". 
We note that there is uniform lower bound of the volume of a Sasaki-Ricci soliton give $\mu$-functional bounded below. 
\begin{lem}\label{P-vol}
For a Sasaki-Ricci soliton, we have
\begin{equation}\label{E-vol}
\Vol\geq \exp(\mu/4(n+1)-2n).
\end{equation}
\end{lem}
\begin{proof}
This is a rather standard fact for compact Ricci solitons and the proof here is almost identical. We give a sketch for completeness. 
Let $(M, g)$ be a Sasaki-Ricci soliton with normalized potential $f$. 
Recall the Sasaki-Ricci soliton equation implies
\[
R^T+\t f= 4n(n+1).
\]
Note that  $f$ is a critical point of $W$ functional and $\mu(g)=W(g, f)$. It then follows that
\[
2\t f-|\nabla f|^2+R^T+4(n+1)f=\mu. 
\]
Hence we can deduce the identity
\[
f=\frac{\mu+|\nabla f|^2+R^T}{4(n+1)}-2n.
\]
A straightforward maximum principle as in \cite{Ivey} (see Proposition 1) shows that $R^T\geq 0$. It then follows that $\min f\geq \frac{\mu}{4(n+1)}-2n$. The normalization condition $\int_M e^{-f}dv_g=1$ then implies $\log \Vol\geq \min f\geq \frac{\mu}{4(n+1)}-2n$. 

\end{proof}

In \cite{Collins} and \cite{He}, a uniform bound on the transverse diameter and transverse scalar curvature were derived  along a fixed Sasaki-Ricci flow, generalizing Perelman's results in Kahler-Ricci flow on Fano manifolds.  Proposition \ref{P-5-1}  follow directly from these results, by applying  the proof in \cite{Collins} and \cite{He} to any Sasaki-Ricci soliton $(M_i, \xi_i, \eta_i, g_i)$, noting that the volume of $(M, \xi_i, \eta_i, g_i)$ is uniformly bounded away from zero by Lemma \ref{P-vol}. 

\begin{rmk}The arguments in \cite{Collins} and \cite{He} are quite different when the Sasaki structure is irregular. The first author generalized Perelman's results to Sasaki-Ricci flow in quasiregular case and then use an approximation argument and the maximum principle to deal with the irregular case. Such an approximation argument works for irregular Sasaki-Ricci solitons since $\mu$ functional is bounded below given the volume properness as in Section 3.  Alternatively, one could also follow \cite{Collins} to obtain Proposition \ref{P-5-1} directly.  \end{rmk}

Proposition \ref{P-5-1} only provides transverse diameter bound. To obtain  compactness results for Sasaki-Ricci solitons, it is important to bound the diameter.  It is observed \cite{He} that the diameter is also bounded along a fixed Sasaki-Ricci flow; but such a bound depends on the Reeb foliation crucially and it is not applicable here. 
We fix a compact Sasaki-manifold $(M, \xi_0, \eta_0, g_0)$ with positive transverse bisectional curvature.  Let $(X, g_X, J)$ be its K\"ahler cone. Fix a maximal compact torus $\T$ in $\Aut(\xi_0, \eta_0, g_0)$, and denote by $\ft$ its Lie algebra. We denote by $\cM$ the space of all $\T$ invariant K\"ahler cone structures on $(X, J)$ that is a simple deformation of $g_X$. Then we have

\begin{prop}\label{P-5-3} For any K\"ahler cone structure  $(\xi, g_X)$ in $\cM$  with uniformly bounded volume,  there exists a closed orbit $\cO$ of $\xi$ on $M$ such that its length with respect to $g$ is uniformly bounded.
\end{prop}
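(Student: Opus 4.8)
The plan is to produce the closed orbit as a one-dimensional orbit of the torus $\T$ sitting over an edge of the moment cone, and then to bound its length by confining $\xi$ to a compact subset of the Reeb cone via the volume. The key starting observation is that $\xi$ is a \emph{unit} vector field: since $\eta(\xi)=1$ and $g=\eta\otimes\eta+g^T$ with $\xi$ orthogonal to $\cD$, we have $g(\xi,\xi)=1$. Hence for any closed orbit $\cO$ of $\xi$ the length of $\cO$ with respect to $g$ equals its period, i.e. the first return time of the Reeb flow $\exp(t\xi)$. Moreover, both this period and $Vol(M,g)$ depend only on the Reeb field $\xi\in\ft$: a transverse K\"ahler deformation fixes $\xi$ and the Reeb foliation, hence fixes the orbit set and the first return time, while by \cite{MSY} the volume is already a function of $\xi$ alone. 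Thus it suffices to bound the length uniformly as $\xi$ ranges over the subset of the Reeb cone $\cR_{\xi_0}$ cut out by the volume bound.

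First I would produce the orbit. The $\T$-action on the cone $(X,\omega_X)$ has a moment map $\mu_X\colon X\to\ft^*$ whose image is the moment cone $\cC^*$ dual to $\cR_{\xi_0}$. Since $\cR_{\xi_0}$ is open and nonempty (it contains $\xi_0$), the cone $\cC^*$ is strictly convex and pointed, so it possesses at least one extreme ray, i.e. a one-dimensional face (edge) $e$. For a point $p$ over the relative interior of $e$ the isotropy subalgebra $\ft_e\subset\ft$ has dimension $k-1$ (the codimension of $e$ in $\ft^*$), so the orbit $\T\cdot p$ is a circle $S^1_e$. Because $\xi\in\ft$, the Reeb field is everywhere tangent to $\T$-orbits; being nowhere zero it restricts to a nonvanishing field on $S^1_e$, so the Reeb orbit through $p$ is exactly $S^1_e$, a closed orbit. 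In particular $\xi\notin\ft_e$, equivalently the image $\bar\xi$ of $\xi$ in the one-dimensional quotient $\ft/\ft_e$ is nonzero.

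Next I would bound the period. Let $\bar v_e$ be the primitive generator of the rank-one lattice $\Lambda/(\Lambda\cap\ft_e)$ in $\ft/\ft_e$, where $\Lambda\subset\ft$ is the integral lattice of $\T$, and write $\bar\xi=s_e(\xi)\,\bar v_e$ with $s_e(\xi)\neq0$. Then $\exp(t\xi)$ stabilizes $p$ for some $t\le 1/|s_e(\xi)|$, so the length of $S^1_e$ is at most $1/|s_e(\xi)|$. The coordinate $s_e$ is a fixed linear function on $\ft$ determined by the edge and the lattice, and it is nonvanishing on all of the open Reeb cone (its vanishing would force $\xi\in\ft_e$, i.e. $\xi$ zero along $S^1_e$). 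Finally, by \cite{MSY} the volume is a proper, strictly convex function on $\cR_{\xi_0}$ blowing up at $\partial\cR_{\xi_0}$, so the sublevel set $K=\{\xi\in\cR_{\xi_0}:Vol\le V\}$ is a compact subset of the open cone. On $K$ the continuous function $|s_e|$ attains a positive minimum $c_K>0$, and therefore the length of $S^1_e$ is at most $1/c_K$, a bound depending only on $V$, the lattice, and the chosen edge.

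The main obstacle is the first step, namely the clean production of a one-dimensional orbit. When $\T$ is not of maximal rank, the preimage $\mu_X^{-1}(e)$ need not be a single orbit, so one must argue via the local normal form for the torus action near the corresponding isotropy stratum to see that it genuinely contains circle orbits with isotropy of dimension exactly $k-1$; I would phrase this through the convexity package for $\cC^*$ together with the identification of face codimension with isotropy dimension. The remaining ingredients are standard: the unit-speed property of $\xi$, the dependence of both volume and period on $\xi$ alone, and the properness of the volume functional of \cite{MSY}.
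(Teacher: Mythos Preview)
Your argument has a genuine gap at the compactness step. The claim that the volume functional is proper on the full Reeb cone $\cR_{\xi_0}$, so that $K=\{\xi\in\cR_{\xi_0}:Vol(\xi)\le V\}$ is compact, is false. Volume does blow up at the finite boundary $\partial\cR_{\xi_0}$, but $\cR_{\xi_0}$ is an open cone and hence unbounded; along the ray $t\xi_0$ one has $Vol(t\xi_0)=t^{-(n+1)}Vol(\xi_0)\to 0$ as $t\to\infty$, so $K$ contains an entire half-ray. The properness statement in \cite{MSY} (and in Section~\ref{S-9} of this paper) is only for the \emph{normalized slice} $\cR'=\cR_{\xi_0}\cap\cH$, and Proposition~\ref{P-5-3} is stated for arbitrary $\xi$ in the Reeb cone. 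Worse, the failure of compactness actually breaks your length bound for a \emph{fixed} edge: already for $S^3$ with $\ft=\R^2$ and $\xi=(a,b)$ in the positive quadrant, $Vol(\xi)$ is proportional to $(ab)^{-1}$; along the curve $a\to 0$, $b=1/a$ the volume stays bounded while $s_{e_1}(\xi)=a\to 0$, so the length of the orbit over the edge $e_1$ tends to infinity. (The orbit over the other edge $e_2$ has length $1/b\to 0$, which suggests your approach could be salvaged by taking the \emph{minimum} over all edges rather than fixing one, but this would require a separate argument relating $\min_e |s_e(\xi)|$ to $Vol(\xi)$, and your self-identified ``main obstacle'' of producing the circle orbits over edges in the non-toric case would still need to be filled in.)

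The paper's proof avoids both difficulties by a direct and elementary argument that does not use the moment cone. After a small Type~I perturbation one may take $(\xi_0,\eta_0,g_0)$ quasi-regular; since lengths of closed Reeb orbits and the volume are invariant under transverse K\"ahler deformation, one may replace $g$ by the Type~I deformation $\tilde g$ with $\tilde\eta=\eta_0/\eta_0(\xi)$. At a point $p$ where the basic function $\eta_0(\xi)$ attains its maximum $m$, one has $d(\eta_0(\xi))=0$ along the $\xi_0$-orbit $\cO_p$, and a one-line computation shows $\xi$ is proportional to $\xi_0$ there, so $\cO_p$ is also a closed $\xi$-orbit (this is Rukimbira's observation \cite{Rukimbira95}). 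Its $\tilde g$-length is $l/m$ with $l$ the fixed $g_0$-length of $\cO_p$. Finally the explicit volume formula
\[
Vol(M,\tilde g)=(n!\,2^n)^{-1}\int_M (\eta_0(\xi))^{-(n+1)}\,\eta_0\wedge(d\eta_0)^n\ \ge\ m^{-(n+1)}\,Vol(M,g_0)
\]
gives $m\ge (Vol(M,g_0)/V)^{1/(n+1)}$, hence a uniform upper bound on $l/m$ depending only on $V$ and the fixed background $(\xi_0,\eta_0,g_0)$. Note that here the choice of closed orbit depends on $\xi$ (it is the orbit through the maximum of $\eta_0(\xi)$), which is exactly the adaptivity your argument is missing.
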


\begin{proof} 
If $\dim \ft=1$, then all metrics in $\cM$ differ by a homothetic transformation, and the statement easily follows. 
Hence we can assume $2\leq \dim\ft\leq n+1$. By a small type I deformation if necessary, we may assume $(\xi_0, \eta_0, g_0)$ is quasi-regular.  For any $(M, \xi, \eta, g)\in \cM$, Lemma \ref{l-2-9} implies  $\eta_0(\xi)>0$, and then by Lemma \ref{l-2-6}, there is a type I deformation of $(\xi_0, \eta_0, g_0)$ with Reeb vector field $\xi$. Let $(\xi, \tilde{g}_X)$ be the K\"ahler cone metric, and $(\xi, \tilde \eta, \tilde g)$ the corresponding Sasaki structure on $M$. 
Then $( \xi, \tilde \eta, \tilde g)$ and $(\xi, \eta,  g)$ differ only by a transverse K\"ahler deformation.  Then for any closed orbit of $\xi$, it has the same length with respect to $g$ as  to $\tilde g$. Moreover, $(M, \xi, \eta, g)$ and $(M, \xi, \tilde \eta, \tilde g)$ have the same volume. So we only need to prove the statement for $(M, \xi, \tilde \eta, \tilde g)$.  To prove this,
first we note that the Reeb foliations of $\xi$ and $\xi_0$ always share some common orbits by Rukimbira \cite{Rukimbira95}. We sketch a proof here for completeness. 
Note that $\tilde \eta=\eta_0/\eta_0(\xi)$ on $M$. 
Let $p$ be a point of the maximum  of $\eta_0(\xi)$, then $d (\eta_0(\xi))=0$ at $p$. Since $\cL_{\xi_0}\eta_0(\xi)=0$, then $\eta_0(\xi)$ obtains the maximum  along the orbit $\cO_p$ of $\xi_0$ through $p$. Since along the orbit $\cO_p$,
\[
d\tilde \eta= (\eta_0(\xi))^{-1}d\eta_0,
\]
it follows that  $\xi$ and $\xi_0$ are proportional along $\cO_p$; hence $\cO_p$ is also a closed orbit of $\xi$. Now suppose the orbit $\cO_p$ has length $l$ with respect to $g_0$, then it has the length $ l\tilde \eta(\xi_0)=l /m$ with respect to the metric $\tilde g$, where $m$ is the value of $\eta_0(\xi)$ at $p$. Note that the volume of $(M, \tilde g)$ is given by
\[
\Vol(M, \tilde g)=(n! 2^n)^{-1}\int_M \tilde \eta \wedge (d\tilde \eta)^n=(n! 2^n)^{-1}\int_M (\eta_0(\xi))^{-(n+1)} \eta_0\wedge (d\eta_0)^n. 
\]
Since  $\eta_0(\xi)$ achieves a maximum at $p$, then
\[
\Vol(M, \tilde g)\geq m^{-(n+1)} \Vol (M, g_0). 
\] 
It then follows that $m^{-1}$, and hence the length of $\cO_p$ with respect to $\tilde g$, is uniformly bounded above, since the length $l$ is uniformly bounded above for the quasi-regular Sasaki structure $(\xi_0, \eta_0, g_0)$. This completes the proof. 
\end{proof}

Now we consider the compactness of a family of Sasaki-Ricci solitons. 

\begin{thm}\label{T-5-2}
Let $(M, \xi_i, \eta_i, g_i)$ be a sequence of Sasaki-Ricci solitons  in $\cM$ with positive (nonnegative) transverse bisectional curvature.  Assume that $\Vol(g_i)$ is uniformly bounded  above and $\mu(g_i)$ is uniformly bounded below. Then by passing to a subsequence it converges to a Sasaki-Ricci soliton $(M, \xi_{\infty}, \eta_{\infty}, g_\infty)$ with positive (nonnegative) transverse bisectional curvature in the smooth Cheeger-Gromov topology.
\end{thm}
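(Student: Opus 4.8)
The plan is to reduce Theorem \ref{T-5-2} to the Cheeger--Gromov--Hamilton compactness theorem by establishing uniform bounds on the curvature, all its covariant derivatives, the diameter, and the injectivity radius of the $g_i$, and then to check that the Reeb field, the contact form, the transverse K\"ahler structure and the soliton equation all survive in the limit. First I would bound the curvature. By Proposition \ref{P-5-1} the transverse scalar curvature is uniformly bounded, and since $H^T\geq 0$ the full transverse curvature tensor is controlled by its trace (the standard fact that under nonnegative holomorphic bisectional curvature the norm of the curvature operator is dominated by the scalar curvature, as exploited in \cite{Mok, Bando}); hence $|R^T|_{g_i}$ is uniformly bounded. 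Using \eqref{e-2-1}, that every plane containing $\xi_i$ has sectional curvature $1$, together with the splitting $g_i=\eta_i\otimes\eta_i+g_i^T$, this upgrades to a uniform bound on the full Riemannian curvature $|Rm(g_i)|$.

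Next I would control the diameter. Proposition \ref{P-5-1} only bounds the transverse diameter $\mathrm{diam}^T(M,g_i)$, i.e. distances between closures of Reeb orbits. To convert this into a genuine diameter bound I would invoke the closed Reeb orbit $\cO$ of uniformly bounded length provided by Proposition \ref{P-5-3}; the uniform volume bound that Proposition \ref{P-5-3} requires is guaranteed here by the volume upper bound in the hypothesis together with the lower bound \eqref{E-vol} coming from $\mu_i\geq\mu$. Any point lies within transverse distance $D$ of $\cO$, and $\cO$ has length at most $\ell$, so that once the extent of the Reeb orbit closures is controlled one obtains $\mathrm{diam}(M,g_i)\leq D'$. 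I expect this to be the main obstacle: in the irregular case the Reeb orbits need not close up, so one must bound the size of the whole orbit closure rather than of a single closed orbit, using the bounded geometry of the fixed $\T$-action. This is exactly the difficulty noted in the remark after Proposition \ref{P-5-1}, namely that the naive (foliation-dependent) diameter bound fails to be uniform as the Reeb field varies within $\ft$.

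With $|Rm(g_i)|$ and the diameter under control, the remaining geometric bounds are routine. Because a Sasaki--Ricci soliton is a self-similar solution of the transverse Sasaki--Ricci flow, the soliton equation $Ric^T-(2n+2)g^T=\cL_{JY}g^T$ is an elliptic system for the metric in transverse-holomorphic coordinates, so a $C^0$ bound on curvature bootstraps to uniform $C^\infty$ bounds on $g_i$ (equivalently, one applies Shi-type derivative estimates) and thus on $|\nabla^k Rm(g_i)|$ for all $k$. For non-collapsing I would use $\mu_i\geq\mu$: a Perelman-type $\kappa$-non-collapsing estimate on the link gives a uniform lower bound on the volume of unit balls, and this together with the two-sided curvature bound yields a uniform positive lower bound on the injectivity radius by Cheeger's lemma.

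Finally, the Cheeger--Gromov--Hamilton theorem produces diffeomorphisms $f_i$ and a subsequence with $f_i^*g_i\to g_\infty$ in $C^\infty$. It remains to transport the Sasaki data. The $\xi_i$ are unit Killing fields of the $g_i$, and a Killing field satisfies the elliptic Bochner equation $\Delta\xi+Ric(\xi)=0$; hence the $f_i^*\xi_i$ enjoy uniform $C^k$ bounds and, after passing to a further subsequence, converge to a Killing field $\xi_\infty$ of $g_\infty$. Likewise $\eta_i$, $\Phi_i$ and the Hamiltonian holomorphic potentials converge. All the Sasaki structure equations and the soliton relation are closed conditions, so $(\xi_\infty,\eta_\infty,g_\infty)$ is again a Sasaki--Ricci soliton. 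Nonnegativity of $H^T$ is closed and passes to the limit at once; in the positive case the limit a priori satisfies $H^T\geq 0$, and strict positivity then follows from the uniform positive lower bound of Proposition \ref{P-8-1} (equivalently, a strong maximum principle applied to the limit soliton), completing the argument.
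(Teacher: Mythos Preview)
Your outline matches the paper's proof step for step: Proposition~\ref{P-5-1} for the transverse scalar curvature and transverse diameter bounds, nonnegativity of $H^T$ together with \eqref{e-2-1} for the full sectional curvature bound, Proposition~\ref{P-5-3} and the transverse diameter for the diameter bound, \eqref{E-vol} for the volume lower bound, the soliton equation for the higher derivative estimates, Cheeger--Gromov for the limit, and Proposition~\ref{P-8-1} for strict positivity. Your discussion of transporting the Sasaki data and of non-collapsing is more explicit than the paper's, but the skeleton is identical.

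The step you flag as the main obstacle---turning the transverse diameter bound and the short closed orbit $\cO$ into a genuine diameter bound---does not actually require bounding the extent of general orbit closures. The Reeb flow $\phi_t$ acts by isometries and preserves $\cO$. For any $q\in M$ the transverse bound produces $a\in\overline{\cO_q}$ and $b\in\cO$ with $d(a,b)\leq D$. Since the closure of the one-parameter group generated by $\xi_i$ is a subtorus $\T'\subset\T$, one has $\overline{\cO_a}=\T'\cdot a=\T'\cdot q=\overline{\cO_q}$, so there are times $t_n$ with $\phi_{t_n}(a)\to q$; then $d(\phi_{t_n}(a),\phi_{t_n}(b))=d(a,b)\leq D$ with $\phi_{t_n}(b)\in\cO$, hence $d(q,\cO)\leq D$. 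This gives $\mathrm{diam}(M,g_i)\leq 2D+\ell$ directly, with no separate control on orbit closures needed. The paper compresses this to the single sentence ``By Proposition~\ref{P-5-3}, the diameter of $(M,g_i)$ is then uniformly bounded'', but this is the mechanism behind it.
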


\begin{proof}
By Proposition \ref{P-5-1}, we know the transverse diameter  and the transverse scalar curvature are uniformly bounded above. By the non-negativity of transverse bisectional curvature, the sectional curvature of $(M, g_i)$ is uniformly bounded.  By Proposition \ref{P-5-3}, the diameter of $(M, g_i)$ is then uniformly bounded. By \eqref{E-vol}, we have
\[
\Vol(g_i)\geq \exp(\mu( g_i) (4(n+1))^{-1}-2n).
\] Hence  by passing to a subsequence we get a limit  manifold
$(M, \xi_\infty, \eta_{\infty}, g_\infty)$ in the $C^{1,\alpha}$ topology. Then the Sasaki-Ricci soliton equation provides a uniform bound on all the $k$-th covariant derivatives of the Riemannian curvature tensor, so the convergence is in the smooth Cheeger-Gromov topology.  It is then clear that $(M, \xi_\infty, \eta_\infty, g_\infty)$ still satisfies the Sasaki-Ricci soliton equation, and the transverse bisectional curvature is nonnegative. If $(M, g_i)$ has positive transverse bisectional curvature, then by Proposition \ref{P-8-1} in the next section, $(M, \xi_\infty, \eta_\infty, g_\infty)$ still has positive transverse bisectional curvature.
\end{proof}

\subsection{Positivity}\label{S-8}
Let $(M, \xi_0, \eta_0, g_0)$ be a compact Sasaki manifold with positive transverse bisectional curvature, normalized by (\ref{normalization}). In \cite{He}, it is proved that the Sasaki-Ricci flow $(\xi(t), \eta(t), g(t))$ starting from $(\xi_0, \eta_0, g_0)$ has positive transverse bisectional curvature and uniformly bounded geometry for all time, and converges by sequence to a Sasaki-Ricci soliton $(M, \xi_\infty, \eta_\infty, g_\infty)$ with non-negative transverse bisectional curvature in the smooth Cheeger-Gromov topology. For the purpose of this paper we need to know the positivity also holds in the limit.

\begin{prop}\label{P-2-1}
 The limit Sasaki-Ricci soliton $(M, \xi_\infty, \eta_\infty, g_\infty)$ has positive transverse bisectional curvature. 
\end{prop}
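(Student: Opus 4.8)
The plan is to start from the fact, established in \cite{He}, that the limit $(M,\xi_\infty,\eta_\infty,g_\infty)$ is a Sasaki-Ricci soliton whose transverse bisectional curvature is only known \emph{a priori} to be nonnegative, and to upgrade nonnegativity to strict positivity by a strong maximum principle argument combined with the soliton structure. First I would regard the soliton as generating a self-similar (eternal) solution $g(t)$ of the Sasaki-Ricci flow: the transverse K\"ahler metric $\o^T$ is moved by the one-parameter family of transverse holomorphic transformations generated by $JY_\infty$ together with the homothetic scaling dictated by the normalization \eqref{e-2-5}, and along this family the transverse bisectional curvature stays nonnegative for all $t$ by \cite{He} (the transverse analogue of Bando \cite{Bando} and Mok \cite{Mok}).

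The second step is the strong maximum principle. Working with basic tensors, or equivalently on the local leaf spaces, the transverse bisectional curvature $R^T_{i\bar j k\bar l}$ satisfies a reaction-diffusion equation along $g(t)$ whose reaction term is Mok's quadratic $Q(R^T)$, up to the first-order drift terms coming from the soliton vector field. Since $Q$ preserves the cone of nonnegative bisectional curvature, Hamilton's strong maximum principle for systems applies: either $R^T$ is strictly positive, in which case we are done, or the null space
\[
N_x=\{\,v\in\cD^{1,0}_x:\ R^T(v,\bar v,\cdot,\cdot)=0\,\}
\]
is a nontrivial distribution which is invariant under transverse parallel transport, is preserved in time, and, together with $\overline{N}$, defines a $J$-invariant parallel distribution. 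Because $N\oplus\overline{N}$ is parallel, the transverse K\"ahler structure locally splits as a product whose flat factor is tangent to $N\oplus\overline{N}$; passing to the (orbifold) transverse universal cover, the de~Rham decomposition extends this to a complete flat K\"ahler factor.

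The third step rules out this splitting using the Ricci potential equation \eqref{e-2-11}, namely $Ric^T+\i\p_B\bp_B h=(2n+2)\o^T$. For $v\in N$ nonnegativity forces every term of $\sum_k R^T(v,\bar v,e_k,\bar e_k)$ to vanish, so $Ric^T(v,\bar v)=0$; the equation then gives $\Hess^T h\,(v,\bar v)=(2n+2)|v|^2>0$. Hence the Ricci potential $h$ is strictly plurisubharmonic along the complete, totally geodesic, \emph{flat} leaves tangent to $N\oplus\overline{N}$. On a compact leaf this is impossible by the maximum principle, while on a complete noncompact flat leaf the identity $\Hess^T h=(2n+2)g^T$ forces $h$ to grow quadratically, hence to be unbounded, contradicting the boundedness of the basic (smooth) function $h$ on the compact manifold $M$. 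Either way $N=0$, so $R^T$ is strictly positive.

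I expect the main obstacle to be the second step: making Hamilton's strong maximum principle rigorous in the transverse setting. One must check that the reaction-diffusion system is genuinely basic (invariant under $\xi_\infty$), that the soliton vector field contributes only first-order drift which does not affect the propagation of the null space, and that the resulting parallel null distribution integrates to a global (orbifold) de~Rham factor, so that the completeness used in the third step is actually available. The algebraic input---that $Q(R^T)$ preserves nonnegativity and that the null space of a nonnegative bisectional curvature operator is invariant under the associated ODE---is exactly what \cite{He} transplants from \cite{Bando, Mok} and can be quoted; the delicate point is the self-similar, eternal time structure needed to conclude that the null space is parallel at the soliton itself.
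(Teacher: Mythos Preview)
Your strategy is plausible but it is \emph{not} the paper's, and the part you flag as the obstacle is indeed where the argument would require real work in the Sasaki setting.

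The paper does not analyze the limit soliton as a self-similar eternal solution at all. It stays on the original Sasaki-Ricci flow $g(t)$ and proves two quantitative lemmas. First (Lemma~\ref{l-8-2}), every sequential limit has \emph{positive transverse Ricci}: nonnegative bisectional curvature is preserved, transverse Ricci is positive somewhere, and Mok's maximum principle makes it positive everywhere instantly. Uniform geometry along the flow then gives $C_1 g^T_t\le Ric^T_t\le C_2 g^T_t$ for all large $t$. Second (Lemma~\ref{l-8-1}), with these Ricci bounds one runs the maximum principle on the modified tensor
\[
S(t)=Rm^T(t)-\lambda(t)\, g^T(t)*Ric^T(t),\qquad C_1^2\lambda-C_2\lambda'-C_3\lambda^2=0,
\]
exactly as in \cite{CST}; since $\lambda(t)\to C_1^2/C_3$, this yields a uniform strictly positive lower bound on the transverse bisectional curvature for all large $t$, hence for the limit. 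The gain of this route is twofold: it never leaves the tensor-maximum-principle framework already set up in \cite{He}, and the uniform constant is reused verbatim in Proposition~\ref{P-8-1}, where one needs positivity to persist under Cheeger--Gromov limits of \emph{sequences} of solitons (not a single flow).

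About your route: the conceptual picture is fine, but the ``complete flat leaf'' lives only transversely, and a de~Rham decomposition on a transverse universal cover of an irregular Reeb foliation is not a standard object; this is precisely the obstacle you anticipate, and the paper avoids it entirely. Note also that the growth step as written mixes two things: from the Ricci potential equation \eqref{e-2-11} you get only the $(1,1)$ part $h_{v\bar v}=(2n+2)|v|^2$; the full real identity $\Hess^T h=(2n+2)g^T$ on the flat factor needs the gradient-soliton equation, i.e.\ that $h$ is a Hamiltonian holomorphic potential, which you should invoke explicitly. In fact you can bypass the splitting completely: once the strong maximum principle gives a nontrivial null vector $v$, you have $Ric^T(v,\bar v)=0$, which already contradicts the positivity of transverse Ricci obtained by the very same argument applied to $Ric^T$ (this is Lemma~\ref{l-8-2}). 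That shortcut recovers the paper's first step and makes your third step unnecessary.
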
 

Such result follows from the K\"ahler setting \cite{CST} and it can be proved by showing the following two lemmata. 

\begin{lem}\label{l-8-2}
 Any limit Sasaki-Ricci soliton $(M, \xi_\infty, \eta_\infty, g_\infty)$ has positive transverse Ricci curvature.
\end{lem}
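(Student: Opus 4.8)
The plan is to transplant the K\"ahler argument of \cite{CST} to the transverse setting, combining Hamilton's strong maximum principle with a transverse de Rham splitting. As a preliminary reduction I would record the elementary fact that nonnegative transverse bisectional curvature forces nonnegative transverse Ricci curvature: contracting the bisectional curvature against a transverse orthonormal frame $\{e_k\}$ of $\cD^{1,0}$ gives, for any unit $u\in\cD^{1,0}$,
\[
Ric^T(u,\bar u)=\sum_k H^T(u,e_k)\ge 0.
\]
Thus $Ric^T\ge 0$ on the limit soliton (whose nonnegativity of transverse bisectional curvature is already provided), and the entire content of the lemma is to exclude a nontrivial null space $\cN=\Ker Ric^T\subset\cD^{1,0}$.

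The next step is the strong maximum principle. Since $(M,\xi_\infty,\eta_\infty,g_\infty)$ is a self-similar solution of the transverse K\"ahler--Ricci flow, along this flow the transverse Ricci form satisfies a Lichnerowicz-type heat equation, exactly as in Bando--Mok, whose reaction term preserves the cone of nonnegative Hermitian forms. Hamilton's strong maximum principle for tensors then yields a dichotomy: either $Ric^T>0$ identically for positive time, in which case we are done, or else $\cN$ has constant positive rank, is invariant under transverse parallel transport, and is a holomorphic sub-bundle of $\cD^{1,0}$. Equivalently one may argue elliptically, applying the strong maximum principle for the drift Laplacian $\t-\nabla h\cdot\nabla$ directly to the soliton identity.

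Finally I would derive a contradiction from the assumption $\cN\ne 0$. The parallel holomorphic distribution $\cN$ induces, via the transverse de Rham / holonomy decomposition, a local K\"ahler splitting of the leaf space in which $Ric^T$ vanishes identically along the $\cN$-directions; nonnegativity of the bisectional curvature then forces this factor to be flat. This is precisely where the normalization $[\rho^T]_B=(2n+2)[\omega^T]_B$ is essential: restricting the defining relation $\rho^T+\i\p_B\bp_B h=(2n+2)\omega^T$ to the flat factor, where $\rho^T=0$, gives $\i\p_B\bp_B h=(2n+2)\omega^T>0$, so $h$ would be a strictly transversally plurisubharmonic basic function along a compact transverse factor, which is impossible by the maximum principle; equivalently, a flat factor would make the positive class $c_1^B$ degenerate. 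Hence $\cN=0$ and $Ric^T>0$. The main obstacle I anticipate is making both the strong maximum principle and the de Rham splitting rigorous when the Sasaki structure is irregular, so that the leaf space is not a genuine manifold; I would handle this either through the transverse holonomy directly or by the quasi-regular approximation used elsewhere in the paper, taking care that the flat factor produced is compact (a torus after passing to a finite cover) so that the maximum principle argument can be run.
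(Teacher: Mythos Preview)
Your approach is correct in spirit but takes a substantially longer route than the paper, and the detour is precisely what creates the obstacle you worry about at the end. The paper avoids the transverse de Rham splitting entirely. It runs the Sasaki--Ricci flow from the limit soliton itself and invokes Hamilton's strong maximum principle in the Bando--Mok form: under nonnegative transverse bisectional curvature, if $Ric^T$ is positive definite at a single point then it is positive definite everywhere for $t>0$; self-similarity of the soliton then pushes this back to $t=0$. The existence of such a point is immediate from the soliton identity $\rho^T+\i\p_B\bp_B h=(2n+2)\omega^T$: at a maximum of the basic function $h$ on the compact manifold $M$ one has $\i\p_B\bp_B h\leq 0$, hence $Ric^T\geq (2n+2)g^T>0$ there. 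No holonomy decomposition, no flat factor, and therefore no issue with irregularity.

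Your argument can in fact be completed by the same observation, without the machinery you propose. Once the strong maximum principle gives you a parallel null distribution $\cN$ of constant rank, you do not need a global product: $\cN$ is defined at every point of $M$, and your restricted identity $\i\p_B\bp_B h|_{\cN}=(2n+2)\omega^T|_{\cN}>0$ evaluated at a maximum of $h$ on $M$ already yields the contradiction. So the step where you pass to a ``compact transverse factor'' and then worry about whether it is genuinely compact in the irregular case is unnecessary; the compactness you need is that of $M$ itself. In short, both arguments hinge on looking at a maximum of $h$, but the paper reaches it in one line while your route goes through a splitting that is awkward to justify and, as it turns out, not needed.
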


 \begin{proof} 
 The evolution equation for transverse Ricci curvature along the Sasaki-Ricci flow is given by
 \[
 \frac{\p Ric^T}{\p t}=\t Ric^T+Ric^T\cdot Rm^T-(Ric^T)^2
 \]
 where $(Ric^T \cdot Rm^T)_{i\bar j}=R^T_{l\bar k}R^T_{i\bar j k\bar l}$. 
 Using the maximum principle as in  K\"ahler setting (see Prop. 8.4 in \cite{He}, also Prop. 1 in \cite{Bando} and Prop. 1.1 in \cite{Mok}), it follows that nonnegative transverse bisectional curvature is preserved along the Sasaki-Ricci flow, and if the transverse Ricci curvature is positive at one point, it then becomes positive instantly. It is clear that $(M, \xi_\infty, \eta_\infty, g_\infty)$ has nonnegative transverse bisectional curvature, and that the transverse Ricci curvature of $(M, \xi_\infty, \eta_\infty, g_\infty)$ is positive at least at one point. Hence $(M, g_\infty)$ has positive transverse Ricci curvature everywhere. 
\end{proof}

Since the geometry is uniformly controlled along the flow, there is  indeed a uniform positive lower bound of transverse Ricci for any limit. Thus there are constants $C_2>C_1>0$, and a time $T_1$, such that 
\begin{equation}\label{e-8-2}
C_1 g^T_{t}\leq Ric^T_t \leq C_2 g^T_t, 
\end{equation}
for any $t>T_1$.

\begin{lem} \label{l-8-1}
There is a constant $\epsilon>0$, and a time $T_0>0$, such that the transverse bisectional curvature of $g(t)$ for $t\geq T_0$ has a uniform positive lower bound, i.e. for any two transverse $(1,0)$ tangent vectors $u$ and $v$ we have
\[
Rm^T_{i\bar jk\bar l}u^i u^{\bar j} v^k v^{\bar l}\geq \epsilon (g^T_{i\bar j}g^T_{k\bar l}+g^T_{i\bar l}g^T_{k\bar j})u^i u^{\bar j} v^k v^{\bar l}.
\]
Here the time $T_0$ may depend on the initial data, but the constant $\epsilon$ can be chosen to depend only on $C_1$ and $C_2$, not on the initial data.
\end{lem}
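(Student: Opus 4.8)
The plan is to establish the uniform pinching by Hamilton's maximum principle for the transverse curvature tensor, following the K\"ahler computations of Bando \cite{Bando} and Mok \cite{Mok} and their use in \cite{CST}, and then to convert the resulting differential inequality into an ODE comparison that drives the transverse bisectional pinching ratio up to a \emph{universal} threshold. Concretely, for $\epsilon>0$ I would consider the tensor
\[
T^{(\epsilon)}_{i\bar jk\bar l}=R^T_{i\bar jk\bar l}-\epsilon\left(g^T_{i\bar j}g^T_{k\bar l}+g^T_{i\bar l}g^T_{k\bar j}\right),
\]
regarded as a Hermitian form on $\cD^{1,0}\otimes\cD^{1,0}$ via $u\otimes v\mapsto T^{(\epsilon)}_{i\bar jk\bar l}u^iu^{\bar j}v^kv^{\bar l}$, and track the largest $\epsilon=\epsilon(t)$ for which $T^{(\epsilon)}\ge0$, i.e. the pinching ratio. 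Since the flow has positive transverse bisectional curvature for all time (\cite{He}), on the compact manifold $\epsilon(t)>0$ for every $t$; the content of the lemma is that $\epsilon(t)$ is eventually bounded below by some $\epsilon_0=\epsilon_0(C_1,C_2,n)$ \emph{independent of the initial data}.

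First I would recall the evolution of the transverse bisectional curvature along the normalized Sasaki-Ricci flow $\p_t g^T_{i\bar j}=-R^T_{i\bar j}+(2n+2)g^T_{i\bar j}$, namely
\[
\left(\frac{\p}{\p t}-\Delta\right)R^T_{i\bar jk\bar l}=Q_{i\bar jk\bar l}-\tfrac12\big(R^T_{i\bar p}R^T_{p\bar jk\bar l}+\cdots\big)+(2n+2)R^T_{i\bar jk\bar l},
\]
where $Q$ is the Bando-Mok quadratic expression, $\cdots$ denotes the three remaining Ricci contractions, and the last term comes from the normalization \eqref{e-2-5} (this is the transverse analogue used in \cite{He}, cf. \cite{Bando, Mok}). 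Differentiating the metric factors in $T^{(\epsilon)}$ and using $\Delta g^T=0$ produces correction terms of the shape $-\epsilon\,[\,2(2n+2)g^Tg^T-(R^Tg^T+g^TR^T)\,]$ together with its $i\bar l,k\bar j$ analogue; crucially, each such term is a contraction of $g^T$ against $Ric^T$ and is therefore controlled purely by the Ricci pinching \eqref{e-8-2}. I would then apply the maximum principle at a first point and time where $T^{(\epsilon)}$ acquires a null direction $(u,v)$: there $R^T(u,\bar u,v,\bar v)=\epsilon(|u|^2|v|^2+|\langle u,v\rangle|^2)$, the first-order terms drop out by the second-derivative test, the quadratic term $Q(u,\bar u,v,\bar v)\ge0$ by nonnegativity of the transverse bisectional curvature, and the remaining terms are bounded below using only $C_1\le Ric^T/g^T\le C_2$. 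Choosing $\epsilon_0=\epsilon_0(C_1,C_2,n)$ small enough, the total reaction at the null direction is strictly positive, so $T^{(\epsilon)}$ cannot develop a zero eigenvalue from above while $\epsilon\le\epsilon_0$.

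Translated into a differential inequality, this gives $\frac{d}{dt}\epsilon(t)\ge\Phi(\epsilon(t))$ in the barrier sense, with $\Phi(s)>0$ for $0<s<\epsilon_0$ and $\Phi$ depending only on $C_1,C_2,n$. Since $\epsilon(T_1)>0$ (positivity plus compactness), ODE comparison forces $\epsilon(t)\ge\epsilon_0/2$ for all $t\ge T_0$, where $T_0$ depends on the initial-data-dependent value $\epsilon(T_1)$ but the threshold $\epsilon:=\epsilon_0/2$ depends only on $C_1,C_2,n$. This is precisely the asserted dichotomy between $T_0$ and $\epsilon$, and feeding $T^{(\epsilon)}\ge0$ into Lemma \ref{l-8-2} and the convergence of Section \ref{S-7} yields Proposition \ref{P-2-1} and Proposition \ref{P-8-1}.

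The main obstacle is the sign of the reaction term at the null direction. The normalization contribution $(2n+2)R^T(u,\bar u,v,\bar v)$ and the $-\epsilon\,\p_t(g^Tg^T)$ correction together produce a net \emph{negative} multiple of $\epsilon(|u|^2|v|^2+|\langle u,v\rangle|^2)$, which has to be absorbed by the positive contributions from the Ricci lower bound $C_1$ and from $Q$. Since \eqref{e-8-2} only provides $C_1>0$ and not, say, $C_1\ge n+1$, the crude estimate does not close, and one must exploit the finer positivity of $Q$ at the \emph{perturbed} null direction $R^T(u,\bar u,v,\bar v)=\epsilon(|u|^2|v|^2+|\langle u,v\rangle|^2)$ rather than at a genuine zero of the curvature; alternatively one may pinch $R^T$ against an auxiliary combination involving $Ric^T$ so that the normalization terms cancel more favorably. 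Making this estimate uniform in the initial data, so that $\epsilon_0$ genuinely depends on $C_1,C_2,n$ alone, is the delicate heart of the argument.
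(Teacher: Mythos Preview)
Your main approach does not close, and you yourself correctly diagnose why: with the auxiliary tensor $T^{(\epsilon)}=R^T-\epsilon\,g^T*g^T$, the reaction term at a null direction picks up the normalization contribution together with the $-\epsilon\,\p_t(g^T*g^T)$ correction, and the resulting negative piece cannot be absorbed by the Ricci lower bound $C_1$ alone (nor by the nonnegativity of $Q$, which at a \emph{perturbed} null direction gives no extra quantitative gain without substantial further work). So as written the proposal is a sketch that ends at the obstacle rather than a proof.

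The fix you mention in passing---``pinch $R^T$ against an auxiliary combination involving $Ric^T$''---is exactly what the paper does, and is the crucial idea you are missing. One sets
\[
S(t)=Rm^T(t)-\lambda(t)\,g^T(t)*Ric^T(t),\qquad (g^T*Ric^T)_{i\bar jk\bar l}=g^T_{i\bar j}R^T_{k\bar l}+g^T_{i\bar l}R^T_{k\bar j}+g^T_{k\bar l}R^T_{i\bar j}+g^T_{k\bar j}R^T_{i\bar l}.
\]
Because $Ric^T$ itself evolves by $\p_t Ric^T=\Delta Ric^T+Ric^T\cdot Rm^T-(Ric^T)^2$, the troublesome normalization-type terms that wrecked your computation are replaced by curvature-squared terms, and one obtains cleanly
\[
\p_t S\ \ge\ \square S+\bigl(C_1^2\lambda-C_2\lambda'-C_3\lambda^2\bigr)\,g^T*g^T
\]
using only \eqref{e-8-2}. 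The ODE $C_1^2\lambda-C_2\lambda'-C_3\lambda^2=0$ has the explicit solution $\lambda(t)=\dfrac{C_4C_1^2 e^{C_1^2 t/C_2}}{1+C_4C_3 e^{C_1^2 t/C_2}}$, which tends to $C_1^2/C_3$ \emph{regardless} of the initial constant $C_4>0$. One chooses $C_4$ small enough (depending on the initial data) so that $S(T_1)\ge0$; Mok's maximum principle then preserves $S\ge0$, and since $\lambda(t)\to C_1^2/C_3$ one gets $Rm^T\ge \epsilon\,g^T*g^T$ with $\epsilon=C_1^3/(2C_3)$ after some $T_0$. This is precisely the dichotomy you were aiming for: $T_0$ absorbs the initial-data dependence through $C_4$, while $\epsilon$ depends only on $C_1,C_2,n$. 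You should replace your $g^T*g^T$ ansatz by this $g^T*Ric^T$ one and carry the computation through.
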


\begin{proof}

As in \cite{He}, the maximum principle arguments in K\"ahler setting can be carried over to transverse K\"ahler structure with slight modification.  
So this lemma follows similarly as in the K\"ahler case, see Lemma 6 in \cite{CST}. We sketch a proof here for completeness.  We know the evolution of the transverse curvature tensor is 
\begin{eqnarray}
\label{e-8-1} \frac{\p R^T_{i\bar{j}k\bar{l}}}{\p t}&=&\Delta
R^T_{i\bar{j}k\bar{l}}+R^T_{i\bar{j}p\bar{q}}R^T_{q\bar{p}k\bar{l}}-R^T_{i\bar{p}k\bar{q}}R^T_{p\bar{j}q\bar{l}}+
R^T_{i\bar{l}p\bar{q}}R^T_{q\bar{p}k\bar{j}}+R^T_{i\bar{j}k\bar{l}}\nonumber\\&&-\frac{1}{2}(R^T_{i\bar{p}}R^T_{p\bar{j}k\bar{l}}
+R^T_{p\bar{j}}R^T_{i\bar{p}k\bar{l}}+R^T_{k\bar{p}}R^T_{i\bar{j}p\bar{l}}+R^T_{p\bar{l}}R^T_{i\bar{j}k\bar{p}}).
\end{eqnarray}
We denote the operator $\square R^T_{i\bar jk\bar l}$ the right hand side of the above expression.
 As in \cite{CST}, we define a new transverse curvature type tensor
$$S(t)=Rm^T(t)-\l(t) g^T(t)* Ric^T(t), $$
where $\lambda(t)$ is a function to be determined, and 
$$(g^T*Ric^T)_{i\bar jk\bar l}=g^T_{i\bar j}Ric^T_{k\bar l}+g^T_{i\bar l} Ric^T_{k\bar j}+g^T_{k\bar l}Ric^T_{i\bar j}+g^T_{k\bar j}Ric^T_{i\bar l}.$$
Now a direct manipulation gives rise to 
\begin{eqnarray*}
\frac{\p S_{i\bar{j}k\bar{l}}}{\p t}&=&
\square
S_{i\bar{j}k\bar{l}}+\l(Ric^T*Ric^T)_{i\bar{j}k\bar{l}}+\l^2(g^T*(Ric^T\cdot
(g^T*Ric^T)))_{i\bar{j}k\bar{l}}\\\\&&-\l^2(Ric^T*((n+2)Ric^T+R^T
g^T)_{i\bar{j}k\bar{l}}-\l'(g^T*Ric^T)_{i\bar{j}k\bar{l}}\\\\&&+
\l^2[(g^T*Ric^T)_{i\bar{j}p\bar{q}}(g^T*Ric^T)_{q\bar{p}k\bar{l}}-
(g^T*Ric^T)_{i\bar{p}k\bar{q}}(g^T*Ric^T)_{p\bar{j}q\bar{l}}\\\\&&+(g^T*Ric^T)_{i\bar{l}p\bar{q}}(g^T*Ric^T)_{q\bar{p}k\bar{j}}].
\end{eqnarray*}
Here for a tensor $A=A_{i\bar{j}}$, and $B=B_{i\bar jk\bar l}$, we denote
$$(A\cdot B)_{i\bar j}=\sum_{k,l}A_{l\bar{k}}B_{i\bar jk \bar l}.$$
By (\ref{e-8-2}), we obtain for a constant $C_3$ depending only on $C_1$ and $C_2$ that
$$\frac{\p S_{i\bar{j}k\bar{l}}}{\p t}\geq \square
S_{i\bar{j}k\bar{l}}+(C_1^2\l-C_2\l'-C_3\l^2)g^T*g^T.$$
Let
$$\l(t)=\frac{C_4C_1^2e^{C_1^2t/C_2}}{1+C_4C_3e^{C_1^2t/C_2}},$$
then it satisfies $$C_1^2\l-C_2\l'-C_3\l^2=0.$$ 
Since $g(T_1)$ has positive transverse bisectional curvature, we can choose $C_4>0$ so small that $S(T_1)\geq 0$.
Then by Mok's maximum principle(see Proposition 8.5 in \cite{He}), we see for any $u$, $v$,  $$S(t)(u, \bar{u}, v, \bar{v})\geq 0$$ for all $t\geq T_1$. 
Since $\lim_{t\rightarrow\infty}\l(t)=C_1^2/C_3, $
we know that there is a time $T_0>T_1$, such that $$Rm^T_t(u, \bar{u}, v, \bar v)\geq \epsilon g^T_t*g^T_t(u, \bar{u} ,v, \bar{v}), $$ 
for $\epsilon=C_1^2/2C_3$ and all $u$, $v$. The uniformity is clear from the proof. 
\end{proof}

 \begin{prop}\label{P-8-1}
Let $(M, \xi_i, \eta_i, g_i)$ be a sequence of Sasaki-Ricci solitons on $M$ with positive transverse bisectional curvature. If the sequence converges to a limit $(M, \xi, \eta, g)$ in the Cheeger-Gromov sense, then $(M, \xi, \eta, g)$ also has positive transverse bisectional curvature. 
\end{prop}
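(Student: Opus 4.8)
The plan is to reduce the statement to the uniform lower bound of Lemma \ref{l-8-1}, exploiting that each $g_i$, being a Sasaki-Ricci soliton, is a self-similar solution of the Sasaki-Ricci flow, so that pointwise curvature inequalities obtained along its flow hold for $g_i$ itself. First I would record the easy closedness facts. Since the Sasaki-Ricci soliton equation is preserved under smooth Cheeger-Gromov convergence, the limit $(M,\xi,\eta,g)$ is again a Sasaki-Ricci soliton; and since nonnegativity of the transverse bisectional curvature is a closed condition, the limit has nonnegative transverse bisectional curvature, hence nonnegative transverse Ricci curvature. The difficulty is exactly that positivity is an \emph{open} condition, so it need not survive the limit without extra input.

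The first real step is to show that the limit has strictly positive transverse Ricci curvature. This is the content of the argument for Lemma \ref{l-8-2}, applied now to the limit soliton directly: running the (self-similar) Sasaki-Ricci flow from $(M,\xi,\eta,g)$, nonnegativity of the transverse bisectional curvature is preserved, while the transverse scalar curvature does not vanish identically, because the soliton trace identity $R^T+\Delta f=4n(n+1)$ gives $\int_M R^T\,dv_g=4n(n+1)\,\mathrm{Vol}>0$ as a consequence of the normalization \eqref{e-2-5}. Hence $Ric^T\neq 0$ somewhere, and the strong maximum principle applied to
\[
\frac{\p Ric^T}{\p t}=\Delta Ric^T+Ric^T\cdot Rm^T-(Ric^T)^2
\]
forces $Ric^T>0$ everywhere on the limit, just as in Lemma \ref{l-8-2}.

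With positivity of $Ric^T$ on the limit in hand, smooth convergence $g_i\to g$ supplies uniform transverse Ricci pinching $C_1 g_i^T\le Ric^T_{g_i}\le C_2 g_i^T$ for all $i$ large, with constants $C_1,C_2>0$ independent of $i$ (the upper bound also following from the uniform curvature bound). I would then apply Lemma \ref{l-8-1} to each soliton $g_i$, viewed as a self-similar solution of the Sasaki-Ricci flow. Because the pinching \eqref{e-8-2} holds with the \emph{same} $C_1,C_2$ for large $i$, Lemma \ref{l-8-1} yields a constant $\epsilon=C_1^2/2C_3>0$ depending only on $C_1,C_2$ (and a time $T_0^{(i)}$ that may depend on $i$) with
\[
Rm^T_{g_i(t)}\ \ge\ \epsilon\, g^T * g^T,\qquad t\ge T_0^{(i)}.
\]
The crucial point is that $\epsilon$ is uniform in $i$, while only the irrelevant time $T_0^{(i)}$ is not. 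Since $g_i$ is a soliton, its Sasaki-Ricci flow is purely a family of transverse-holomorphic diffeomorphisms, so the transverse curvature tensor of $g_i(t)$ is the pullback of that of $g_i$; the diffeomorphism-invariant inequality above therefore holds for $g_i$ itself with the same uniform $\epsilon$.

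Finally, passing this uniform bound through the smooth Cheeger-Gromov convergence gives $Rm^T_g\ge \epsilon\, g^T * g^T>0$, which is precisely positive transverse bisectional curvature for $(M,\xi,\eta,g)$. The main obstacle, and the step I would treat most carefully, is the uniformity: one must arrange that the Ricci pinching constants $C_1,C_2$ can be chosen uniformly along the sequence (this is why positive transverse Ricci of the \emph{limit} is needed), and one must invoke Lemma \ref{l-8-1} precisely in the form where the lower bound $\epsilon$ depends only on $C_1,C_2$ and not on how positive the individual $g_i$ happen to be. Transferring the flow estimate back to the static soliton via self-similarity is the conceptual key that lets the open condition of positivity survive the limit.
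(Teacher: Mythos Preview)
Your proposal is correct and follows essentially the same line as the paper: show the limit has positive transverse Ricci via the argument of Lemma~\ref{l-8-2}, deduce uniform Ricci pinching along the tail of the sequence, invoke Lemma~\ref{l-8-1} to get a uniform $\epsilon$-lower bound on the transverse bisectional curvature, and pass to the limit. You even make explicit a point the paper leaves tacit, namely that the flow estimate of Lemma~\ref{l-8-1} transfers back to the static soliton $g_i$ because a Sasaki-Ricci soliton evolves by transverse diffeomorphisms, so the time $T_0^{(i)}$ is irrelevant and only the uniform $\epsilon=\epsilon(C_1,C_2)$ matters.
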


\begin{proof}
Clearly $(M, \xi, \eta, g)$ is a Sasaki-Ricci soliton with nonnegative transverse bisectional curvature. By the defining equation, the transverse Ricci curvature is positive  at least at one point. Then the proof of Lemma \ref{l-8-2}  applies here to show that $(M, \xi, \eta, g)$ has  positive transverse Ricci curvature. Suppose $2c g^T\leq Ric^T\leq C g^T$ for positive constants $c$ and $C$. Then for $i$ large enough, we have $cg_i^T\leq Ric^T_i\leq 2C g_i^T$. Then applying Lemma \ref{l-8-1}  to $(M, \xi_i, \eta_i, g_i)$, we obtain a uniformly positive lower bound of the transverse bisectional curvature of $g_i$, depending only on $c$ and $C$. By taking limit we deduce Proposition \ref{P-8-1}. 
\end{proof}

\section{Discussions}\label{S-9}

There are several questions that seem to be interesting to the authors. 

\begin{enumerate}

\item  It is natural to ask whether one can always deform a Sasaki-Ricci soliton to a Sasaki-Einstein metric by a simple deformation. 
More precisely speaking,  we consider  simple deformation of Sasaki structures on $M$ by fixing a maximal torus $\T$  in $\Aut(X, J)$ with Lie algebra $\ft$. Let $\cR'_{KS}$ be the set of elements $\xi$ in $\cR'$ that is the Reeb vector field of a $\T$-invariant Sasaki-Ricci soliton.  Then by Theorem \ref{local deformation} we know $\cR'_{KS}$ is an open subset of $\cR'$, and  we would like to study the structure of  $\cR'_{KS}$. In particular we want to know whether it is true that $\cR^{'}_{KS}=\cR^{'}$ when $\cR^{'}_{KS}$ is not empty. One could try to understand this problem by a deformation method, as done in this paper. The main difficulty lies in the closedness property of the deformation. We have confirmed this in this paper when the soliton has positive transverse bisectional curvature, by proving geometric bounds. In general one would expect a more direct approach based on the study of space of all Sasaki metrics that arise as simple deformations.  Two related special cases have been considered previously. Futaki-Ono-Wang \cite{FOW}  proved that on a toric Sasaki manifold,  $\cR^{'}_{KS}=\cR^{'}$ and in particular, there exists a Sasaki-Einstein metric for the unique choice of Reeb vector field in $\cR^{'}$. Mabuchi-Nakagawa  \cite{MN} considered another special case which contains non toric examples.

\item An algebro-geometric counterpart of the main theorem, in particular, in the orbifold setting. It seems that a characterization of weighted projective spaces
         analogous to Kobayashi-Ochiai  would be very helpful.  
         
\item It is a question in Chen-Tian \cite{Chen-Tian2} to classify compact K\"ahler orbifolds with positive bisectional curvature. Corollary \ref{orbifold Frankel} gave an answer when the orbifold is polarized. This restriction comes from the fact that such orbifolds corresponds exactly to quasi-regular compact Sasaki manifolds. In general we may get a compact Sasaki orbifold. Given the fact(c.f. \cite{Chen-Tian2}) that a compact K\"ahler-Einstein orbifold with positive bisectional curvature is a global quotient of the projective space, it is tempting to expect that a compact Sasaki orbifold with positive transverse bisectional curvature is a finite quotient of a weighted Sasaki sphere. Then it would follow that a compact K\"ahler orbifold with positive bisectional curvature is bi-holomorphic to a finite quotient of a weighted projective space. A complete answer to this  would require a detailed analysis of the Sasaki-Ricci flow on orbifolds.

\end{enumerate}

\noindent Weiyong He\\
Department of Mathematics, University of Oregon, Eugene, Oregon, 97403\\
Email: whe@uoregon.edu\\

\noindent Song Sun\\
Department of Mathematics, Imperial College, London SW7 2AZ, U.K \\
Email: {s.sun@imperial.ac.uk}\\
New address: \\
Department of Mathematics, SUNY, Stony Brook, NY 11794, USA\\
Email: song.sun@stonybrook.edu

\end{document}